\theoremstyle{plain}
\newtheorem{theorem}{Theorem}[section]
\newtheorem{corollary}[theorem]{Corollary}
\newtheorem{lemma}[theorem]{Lemma}
\newtheorem{proposition}[theorem]{Proposition}
\theoremstyle{remark}
\newtheorem{remark}{Remark}[section]
\newtheorem{example}[remark]{Example}
\newtheorem{definition}[remark]{Definition}
\begin{document}


\title{On branched minimal immersions of surfaces by first eigenfunctions}

\author{Donato Cianci, Mikhail Karpukhin, and Vladimir Medvedev}

\thanks{The first author was supported by a CRM-Laval Postdoctoral Fellowship while some of this research was conducted.}

\address{University of Michigan, Department of Mathematics, 530 Church Street, Ann Arbor, MI 48109}

\email{cianci@umich.edu}

\address{University of California, Irvine, Department of Mathematics, 340 Rowland Hall,
Irvine, CA 92697-3875 }

\thanks{The second author was partially supported by a Schulich Fellowship}

\email{mkarpukh@uci.edu}

\address{D\'{e}partement de Math\'{e}matiques et de Statistique, Pavillon Andr\'{e}-Aisenstadt, Universit\'{e} de Montr\'{e}al, Montr\'{e}al, QC, Canada, H3C 3J7}

\email{medvedevv@dms.umontreal.ca}



\begin{abstract}
It was proved by Montiel and Ros that for each conformal structure on a compact surface there is at most one metric which admits a minimal immersion into some unit sphere by first eigenfunctions. We generalize this theorem to the setting of metrics with conical singularities induced from branched minimal immersions by first eigenfunctions into spheres. Our primary motivation is the fact that metrics realizing maxima of the first non-zero Laplace eigenvalue are induced by minimal branched immersions into spheres. In particular, we show that the properties of such metrics induced from $\mathbb{S}^2$ differ significantly from the properties of those induced from $\mathbb{S}^m$ with $m>2$. 
This feature appears to be novel and needs to be taken into account in the existing proofs of the sharp upper bounds for the first non-zero eigenvalue of the Laplacian on the $2$-torus and the Klein bottle.
In the present paper we address this issue and give a detailed overview of the complete proofs of these upper bounds following the works of Nadirashvili, Jakobson-Nadirashvili-Polterovich, El Soufi-Giacomini-Jazar, Nadirashvili-Sire and Petrides.
\end{abstract}

\maketitle


\newcommand{\Ric}{\operatorname{Ric}}
\newcommand\cont{\operatorname{cont}}
\newcommand\diff{\operatorname{diff}}
\newcommand{\myVol}{\operatorname{vol}}
\newcommand{\dvol}{\text{dV}}
\newcommand{\GL}{\operatorname{GL}}
\newcommand{\myO}{\operatorname{O}}
\newcommand{\myP}{\operatorname{P}}
\newcommand{\eye}{\operatorname{Id}}
\newcommand{\myF}{\operatorname{F}}
\newcommand{\vol}{\operatorname{vol}}
\newcommand{\odd}{\operatorname{odd}}
\newcommand{\even}{\operatorname{even}}
\newcommand{\ol}{\overline}
\newcommand{\mye}{\operatorname{E}}
\newcommand{\myo}{\operatorname{o}}
\newcommand{\myt}{\operatorname{t}}
\newcommand{\irr}{\operatorname{Irr}}
\newcommand{\mydiv}{\operatorname{div}}
\newcommand{\re}{\operatorname{Re}}
\newcommand{\im}{\operatorname{Im}}
\newcommand{\can}{\mathrm{can}}
\newcommand{\scal}{\operatorname{scal}}
\newcommand{\spec}{\operatorname{spec}}
\newcommand{\tr}{\operatorname{trace}}
\newcommand{\sgn}{\operatorname{sgn}}
\newcommand{\SL}{\operatorname{SL}}
\newcommand{\myspan}{\operatorname{span}}
\newcommand{\mydet}{\operatorname{det}}
\newcommand{\SO}{\operatorname{SO}}
\newcommand{\SU}{\operatorname{SU}}
\newcommand{\specl}{\operatorname{spec_{\mathcal{L}}}}
\newcommand{\fix}{\operatorname{Fix}}
\newcommand{\id}{\operatorname{id}}
\newcommand{\singsup}{\operatorname{singsupp}}
\newcommand{\wave}{\operatorname{wave}}
\newcommand{\ind}{\operatorname{ind}}
\newcommand{\mynull}{\operatorname{null}}
\newcommand{\floor}[1]{\left \lfloor #1  \right \rfloor}

\newcommand\restr[2]{{
  \left.\kern-\nulldelimiterspace 
  #1 
  \vphantom{\big|} 
  \right|_{#2} 
  }}


\section{Introduction}
Let $(\Sigma, g)$ denote a closed, connected Riemannian surface where the metric $g$ is induced from a minimal isometric immersion into a round sphere of radius $r$. That is, $\Phi \colon (\Sigma,g) \to (\mathbb{S}_r^n, g_{\mathrm{can}})$ is a minimal isometric immersion. By a well known result of Takahashi \cite[Theorem 3]{takahashi}, the coordinate functions of such minimal immersions $\Phi$ are given by eigenfunctions for the Laplace-Beltrami operator on $(\Sigma, g)$ with corresponding eigenvalue $\frac{2}{r^2}$. However, not all immersions are by {\it first} eigenfunctions. The following theorem shows that each conformal class of $\Sigma$ admits at most one metric induced from an immersion into a sphere by first eigenfunctions:

\begin{theorem}[\cite{MR813585,ilias}]
\label{thm:smooth_rigidity}
For each conformal structure on a compact surface, there exists at most one metric which admits an isometric immersion into some unit sphere by first eigenfunctions. 
\end{theorem}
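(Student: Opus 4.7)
The plan is to suppose that two metrics $g_1, g_2$ in the same conformal class $[g_0]$ on $\Sigma$ both admit isometric immersions $\Phi_i : (\Sigma,g_i) \to \mathbb{S}^{n_i}$ by first eigenfunctions, and to deduce $g_1 = g_2$. Takahashi's theorem together with the identity $|d\Phi_i|_{g_i}^2 = 2$ for an isometric surface immersion first pins down $\lambda_1(g_i) = 2$ and the normalization $\sum_j(\Phi_i^j)^2 \equiv 1$ for both $i$. The idea is then to use the components of $\Phi_1$ as test functions on $(\Sigma,g_2)$, deduce an area inequality, and close the argument by symmetry.

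The central step is a Hersch-type balancing performed on the target sphere. The conformal group $\operatorname{Conf}(\mathbb{S}^{n_1})$ contains a family of Möbius transformations parametrized by the open ball $\mathbb{B}^{n_1+1}$, and a standard Brouwer degree argument produces $\gamma \in \operatorname{Conf}(\mathbb{S}^{n_1})$ such that $\Psi := \gamma \circ \Phi_1$ satisfies $\int_\Sigma \Psi^k\, dV_{g_2} = 0$ for every coordinate $k$. Each $\Psi^k$ is then admissible in the Rayleigh quotient for $\lambda_1(g_2) = 2$. Summing over $k$, using $\sum_k (\Psi^k)^2 \equiv 1$, and invoking the conformal invariance of the Dirichlet energy on a surface, I obtain
\begin{equation*}
2\operatorname{Area}(\Sigma, g_2) \;\leq\; \int_\Sigma |d\Psi|_{g_2}^2 \, dV_{g_2} \;=\; \int_\Sigma |d\Psi|_{g_1}^2 \, dV_{g_1} \;=\; 2\operatorname{Area}(\gamma \circ \Phi_1),
\end{equation*}
the final equality using that $\Psi$ is weakly conformal, as the composition of the isometric $\Phi_1$ with a conformal self-map of $\mathbb{S}^{n_1}$.

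The right-hand side is bounded by the Li--Yau conformal volume. Since $\Phi_1$ is a minimal immersion by first eigenfunctions on $(\Sigma,g_1)$, it realizes the conformal volume of $[g_0]$, so $\operatorname{Area}(\gamma' \circ \Phi_1) \leq \operatorname{Area}(\Sigma,g_1)$ for every $\gamma' \in \operatorname{Conf}(\mathbb{S}^{n_1})$. Chaining the inequalities gives $\operatorname{Area}(\Sigma,g_2) \leq \operatorname{Area}(\Sigma,g_1)$, and swapping the roles of $g_1, g_2$ yields equality. Equality in the Rayleigh quotient then forces each $\Psi^k$ to be a $\lambda_1(g_2)$-eigenfunction; together with $\sum_k (\Psi^k)^2 \equiv 1$ and the converse direction of Takahashi's theorem, $\Psi$ is an isometric minimal immersion of $(\Sigma,g_2)$ into $\mathbb{S}^{n_1}$, so $\Psi^* g_{\mathrm{can}} = g_2$. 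On the other hand $\Psi^* g_{\mathrm{can}} = e^{2\sigma \circ \Phi_1} g_1$ with $\sigma$ the conformal factor of $\gamma$, and the equality case of Li--Yau forces $e^{2\sigma \circ \Phi_1} \equiv 1$, hence $g_1 = g_2$.

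The hardest step I anticipate is the analysis of the equality case at the end: one must propagate the equality in the Li--Yau bound to the statement that the Möbius factor $\sigma$ vanishes along $\Phi_1(\Sigma)$, and then use that $\Phi_1(\Sigma)$ is not contained in any proper totally geodesic subsphere of $\mathbb{S}^{n_1}$ (which follows from the fullness of the first eigenfunction representation) to conclude that the induced conformal factor is identically one. This is also precisely the point at which the smooth argument will require adaptation in the branched setting treated later in the paper.
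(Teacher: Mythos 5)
Your route is the same one the paper follows (and the one in El Soufi--Ilias): Hersch balancing on the target sphere, conformal invariance of the Dirichlet energy, the monotonicity of area under the conformal group for a minimal immersion (i.e.\ that $\Phi_1$ realizes its conformal volume, Proposition~\ref{variation_formula} here), symmetry in $g_1,g_2$, and then an equality-case analysis as in Proposition~\ref{auxiliary_prop}. Up to and including the identity $\Psi^*g_{\mathrm{can}}=g_2$ (which really uses weak conformality of $\Psi$ plus $\Delta_{g_2}\Psi=2\Psi$ and $|\Psi|\equiv 1$, rather than a ``converse Takahashi''), the argument is fine.

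The gap is in the last step. From equality of areas you conclude that ``the equality case of Li--Yau forces $e^{2\sigma\circ\Phi_1}\equiv 1$,'' justified by fullness of the first-eigenfunction representation. That justification does not work: the correct rigidity statement is that $\vol(\Sigma,(\gamma\circ\Phi_1)^*g_{\mathrm{can}})<\vol(\Sigma,g_1)$ for every non-isometric $\gamma$ \emph{unless} the image $\Phi_1(\Sigma)$ is an equatorial $2$-sphere, and proving this exception-aware strictness is precisely the nontrivial content of Proposition~\ref{variation_formula} (the first-variation computation together with the argument that $A^\perp\equiv 0$ forces the flow lines, hence the image, to sweep out a great $2$-sphere). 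Fullness in $\mathbb{S}^{n_1}$ neither follows automatically from ``first eigenfunctions'' nor excludes the bad case, since the bad case is exactly when the full subsphere is $2$-dimensional: take $\Sigma=\mathbb{S}^2$, $\Phi_1$ a conformal diffeomorphism onto a great $2$-sphere, and $\gamma$ any non-isometric M\"obius transformation preserving it; then every inequality in your chain is an equality, yet $\sigma\circ\Phi_1\not\equiv 0$ and $g_2=\gamma^*g_{\mathrm{can}}\neq g_1$, so uniqueness holds only up to a conformal automorphism. This is exactly the dichotomy the paper isolates (category 3 of Theorem~\ref{unique} and the second bullet of Proposition~\ref{auxiliary_prop}). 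To close your proof in the unbranched setting you must either supply the strict-monotonicity argument and then dispose of the exceptional case separately (an unbranched conformal map onto an equatorial $2$-sphere is a covering, so $\Sigma\cong\mathbb{S}^2$ and the two metrics differ by a conformal automorphism, i.e.\ are isometric), or cite that rigidity precisely; as written, the step ``equality $\Rightarrow \sigma\equiv 0$'' is asserted, not proved, and is false without the equatorial-sphere exclusion.
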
 

In this article we generalize Theorem \ref{thm:smooth_rigidity} to the setting of {\it branched} minimal immersions into round spheres by first eigenfunctions (see Theorem \ref{unique} for a precise statement). Branched minimal immersions are given by smooth maps $\Phi \colon \Sigma \to \mathbb{S}^n$ which are minimal immersions except at finitely many points at which $\Phi$ becomes singular. In this situation, the pullback metric $\Phi^*g_{\mathrm{can}}$ on $\Sigma$ will possess conical singularities at the singular points of $\Phi$. Branched minimal immersions into spheres by first eigenfunctions occur in the study of metrics which maximize the first non-zero Laplace eigenvalue, denoted $\lambda_1$, among all metrics of area one. Indeed, in \cite{matthiesen_preprint}, Matthiesen and Siffert proved that for any closed surface $\Sigma$ there exists a metric $\widehat{g}$ of area one, smooth except for possibly finitely many points which correspond to conical singularities, that maximizes $\lambda_1$ among all other unit-area metrics on $\Sigma$. These maximal metrics are induced from branched minimal immersions into a round sphere by first eigenfunctions and do in general possess conical singularities (see \cite{nayatani}).  Therefore, it is natural to study Theorem \ref{thm:smooth_rigidity} in the context of branched minimal immersions.

A technical difficulty unique to the branched immersion case is that one can have branched minimal immersions by first eigenfunctions whose images are an equatorial $2$-sphere. Indeed, the conclusion of Theorem~\ref{thm:smooth_rigidity} is valid only with the restriction that the image of the branched minimal immersion is not an equatorial $2$-sphere. This restriction indicates that the branched minimal immersions by first eigenfunctions into $\mathbb{S}^2$ are in a way special. Moreover, we show that if a conformal class has a metric induced by a branched minimal immersion by first eigenfunctions to $\mathbb{S}^2$ then it does not have a metric induced by a non-trivial branched minimal immersion by first eigenfunctions to a higher-dimensional sphere. 



Theorem \ref{thm:smooth_rigidity} has been applied to help classify certain metrics which maximize $\lambda_1$ (see the discussion in the next section). However, our generalization of Theorem \ref{thm:smooth_rigidity} presents a novel feature that needs to be taken into account in this classification.
In the present article we address this issue by proving that there are no branched minimal immersions of a torus or a Klein bottle to $\mathbb{S}^2$.
In order to precisely state our results, we give a more detailed version of the previous discussion.    

\subsection{Maximization of the first eigenvalue on surfaces and minimal immersions}
After fixing a surface $\Sigma$, let $\mathcal{R}(\Sigma)$ be the collection of Riemannian metrics on $\Sigma$. We have the following homothety invariant functional on $\mathcal{R}(\Sigma)$: 
\begin{align*}
\bar\lambda_{1}: \mathcal{R}(\Sigma)  \to \mathbb{R}_{\geq 0};  \hspace{20 pt} \bar\lambda_{1}(\Sigma, \cdot): g \mapsto \lambda_{1}(g) \vol(\Sigma,g),
\end{align*}
where $\lambda_1(g)$ is the first non-zero Laplace-Beltrami eigenvalue of $(\Sigma, g)$ and $\vol(\Sigma,g)$ is the area of $(\Sigma, g)$. 

Using the notion of {\it conformal volume} (see Section \ref{background}), Li and Yau \cite{MR674407} established the following upper bound for $\bar\lambda_1(\Sigma,g)$ when $\Sigma$ is orientable and has genus $\gamma$ (see also \cite{MR577325}): 
\begin{align}\label{Li-Yau}
\bar\lambda_1(\Sigma,g) \leq 8 \pi \left \lfloor \frac{\gamma + 3}{2} \right \rfloor,
\end{align}
where the bracket denotes the integer part of the number inside.
Modifying the ideas of Li and Yau, the second author \cite[Theorem 1]{MR3579963} proved the following upper bound for non-orientable surfaces (of genus $\gamma$):
\begin{align}\label{Karpukhin}
\bar\lambda_1(\Sigma, g) \leq 16  \pi \left \lfloor \frac{\gamma +3}{2} \right \rfloor. 
\end{align}
Here the genus of a non-orientable surface is defined to be the genus of its orientable double cover.

Thus, $\bar\lambda_1(\Sigma,g)$ is bounded above on $\mathcal{R}(\Sigma)$. Naturally, one is interested in finding sharp upper bounds for $\bar\lambda_1(\Sigma, g)$ for a given surface and also characterizing the {\it maximal} metrics.\begin{definition}
Let $\Sigma$ be a closed surface. A metric $g_0$ on $\Sigma$ is said to be {\it maximal} for the functional $\bar\lambda_1(\Sigma,g)$ if
\begin{align*}
\bar\lambda_1(\Sigma,g_0)=\sup_{g\in \mathcal{R}(\Sigma) }\bar\lambda_1(\Sigma,g).
\end{align*}
\end{definition} 
\noindent Throughout, we will denote the value of $\sup_{g\in \mathcal{R}(\Sigma) }\bar\lambda_1(\Sigma,g)$ by $\Lambda_1(\Sigma)$. Additionally we set $\Lambda_1(\Sigma,[g])$ to be $\sup_{g\in [g]}\bar\lambda_1(\Sigma,g)$, where $[g]$ denotes the conformal class of a metric $g$. The following theorem guarantees the existence of a maximal metric on $\Sigma$, modulo finitely many points at which the metric may have conical singularities.  

\begin{theorem}[\cite{matthiesen_preprint}]
\label{lit_existence}
For any closed surface $\Sigma$, there is a metric $g$ on $\Sigma$, smooth away from finitely many conical singularities, achieving $\Lambda_1(\Sigma)$, i.e.
\begin{align*}
\Lambda_1(\Sigma) = \bar\lambda_1(\Sigma,g).
\end{align*}
\end{theorem}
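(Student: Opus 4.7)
The plan is to construct a maximizer by the direct method: take a maximizing sequence $(g_n) \subset \mathcal{R}(\Sigma)$ of unit area with $\bar\lambda_1(\Sigma, g_n) \to \Lambda_1(\Sigma)$, and extract a suitable limit after controlling the two independent sources of noncompactness, namely degeneration of the underlying conformal class and concentration of the conformal factor within a fixed class. I would organize the argument in two nested steps: first establish existence of a maximizer in each fixed conformal class $[g_0]$ (realizing $\Lambda_1(\Sigma,[g_0])$), then pass to the supremum over conformal classes.

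For the inner step, I would follow the concentration-compactness strategy of Nadirashvili and Petrides adapted to the normalized eigenvalue functional restricted to $[g_0]$. Writing $g_n = e^{2\varphi_n} g_0$ with $\int_\Sigma e^{2\varphi_n}\, dv_{g_0} = 1$, the key dichotomy for the measures $\mu_n = e^{2\varphi_n}\, dv_{g_0}$ is: either (i) $\mu_n$ converges weakly to a limit of the form $e^{2\varphi_\infty}\, dv_{g_0} + \sum_j m_j \delta_{p_j}$ for some limit conformal factor $\varphi_\infty$ and finitely many concentration points, producing a candidate limit metric smooth away from the $p_j$; or (ii) mass concentrates in a bubble conformally equivalent to a round sphere, in which case the relevant contribution is at most $\Lambda_1(\mathbb{S}^2,[g_{\mathrm{can}}]) = 8\pi$. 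The strict inequality $\Lambda_1(\Sigma, [g_0]) > 8\pi$, verified in advance by exhibiting any test metric with $\bar\lambda_1 > 8\pi$, rules out alternative (ii). In case (i) the first eigenfunctions of $g_n$ converge to first eigenfunctions of the limit metric, and elliptic regularity for the associated harmonic-map equation with measure-valued defect shows the limit is smooth off the concentration points, where conical singularities may appear.

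For the outer step, take a sequence $[g_n]$ with $\Lambda_1(\Sigma,[g_n]) \to \Lambda_1(\Sigma)$ and invoke Mumford-type compactness of the moduli space of $\Sigma$ modulo degeneration along pinching geodesics. If $[g_n]$ stays in a compact region of moduli space, I extract a limit conformal class and apply the inner step. The principal obstacle is to rule out the degenerate scenario, in which $\Sigma$ pinches along short geodesics into pieces $\Sigma_j$ of strictly simpler topology. To exclude this, I would establish a strict gap of the form $\Lambda_1(\Sigma) > \sum_j \Lambda_1(\Sigma_j)$ by an explicit gluing construction: given near-maximal metrics on each $\Sigma_j$, attach them along small necks to produce a competitor metric on $\Sigma$ whose $\bar\lambda_1$ strictly exceeds the sum of the pieces' contributions. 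Combined with the general bounds \eqref{Li-Yau} and \eqref{Karpukhin} to control the admissible split pieces, this gluing inequality is the technical heart of the proof and the step I expect to be most delicate, since it requires fine control of the low spectrum under surgery and a careful choice of necks adapted to the first eigenfunctions of the candidate pieces.
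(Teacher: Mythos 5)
The paper does not actually prove Theorem~\ref{lit_existence}: it is quoted from Matthiesen--Siffert \cite{matthiesen_preprint}, whose argument indeed has the two-level architecture you describe (maximization within a fixed conformal class via Nadirashvili--Sire \cite{MR3427416} and Petrides \cite{petrides}, then control of degenerating conformal classes); the present paper only establishes the special cases it needs, $\Sigma=\mathbb{T}^2$ and $\mathbb{KL}$, in Step~1 of the proof of Theorem~\ref{TSmooth}, where degeneration is excluded by Girouard's explicit bounds (Theorems~\ref{Girouard torus} and~\ref{Girouard}) together with $\Lambda_1(\mathbb{T}^2)>8\pi$, $\Lambda_1(\mathbb{KL})>12\pi$ and the continuity of $[g]\mapsto\Lambda_1(\Sigma,[g])$ (Proposition~\ref{cont}). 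Your outline has the right skeleton, but the two load-bearing steps are, as written, wrong or missing.

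First, in the inner step you claim the strict inequality $\Lambda_1(\Sigma,[g_0])>8\pi$ can be ``verified in advance by exhibiting any test metric'' in an arbitrary conformal class. This cannot work: the conformal supremum is always at least $8\pi$, and along degenerating classes it tends to $8\pi$ (combine Theorem~\ref{Girouard torus} with that lower bound), so no uniform test-metric construction gives strictness in every class; what is actually used is $\Lambda_1(\Sigma)>8\pi$ for the limiting class of a maximizing sequence, and that strict bound is itself part of what has to be proved. Second, your proposed degeneration-excluding inequality $\Lambda_1(\Sigma)>\sum_j\Lambda_1(\Sigma_j)$ is false: pinching a genus-two surface along a separating curve produces two tori, and $\Lambda_1(\Sigma_2)=16\pi$ while $2\Lambda_1(\mathbb{T}^2)=16\pi^2/\sqrt{3}\approx 29\pi$. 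Moreover a gluing construction cannot yield a competitor whose $\bar\lambda_1$ exceeds the sum: a thin neck joining two pieces of comparable area creates a test function nearly constant on each piece and orthogonal to constants, forcing $\lambda_1$ to be small. The inequality actually needed (and proved by Matthiesen--Siffert) is strict monotonicity of $\Lambda_1$ under attaching a handle or cross-cap, i.e.\ a strict comparison of $\Lambda_1(\Sigma)$ with the value of the simpler surface arising in the limit; this is the entire content of the cited result, and in your proposal it is only announced as a plan. Finally, your outer step implicitly uses (upper semi-)continuity of $[g]\mapsto\Lambda_1(\Sigma,[g])$ on moduli space when passing from $\Lambda_1(\Sigma,[g_n])\to\Lambda_1(\Sigma)$ to the limit class; this is not automatic and is exactly what the paper proves in Proposition~\ref{cont} because no reference was available.
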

\begin{remark}
\begin{enumerate}[(i)]
\item The proof of Theorem~\ref{lit_existence} uses results of Nadirashvili and Sire \cite{MR3427416} and Petrides \cite{petrides} on the maximization of $\Lambda_1(\Sigma,g)$ in a conformal class.
\item Nayatani and Shoda \cite{nayatani} recently proved that $\Lambda_1$ is maximized by a metric on the Bolza surface with constant curvature one and six conical singularities (this metric was proposed to be maximal in \cite{MR2202582}). Thus, Theorem \ref{lit_existence} is optimal in regards to the regularity of a maximal metric.  
\end{enumerate}
\end{remark}

As the next theorem shows, these maximal metrics for $\bar\lambda_1$ are induced from branched minimal immersions into round spheres. It was first proved by Nadirashvili in \cite{MR1415764} for the particular case of $\bar\lambda_1$. Later, the theorem was generalized to maximal metrics for higher Laplace eigenvalues in \cite{MR2378458}. As noted in \cite{nadirashvili_preprint}, the theorem also holds for metrics with conical singularities (in part because the variational characterization of $\lambda_1$ is the same whether considering metrics with conical singularities or smooth metrics). Together with Theorem \ref{lit_existence}, Theorem \ref{extremal} is our motivation for studying branched minimal immersions by first eigenfunctions. 

\begin{theorem}[\cite{MR1415764},\cite{MR2378458},\cite{kokarev}]\label{induced}
\label{extremal}
Let $g_0$ be a metric on a closed surface $\Sigma$, possibly with conical singularities. Moreover, suppose that:
\begin{align*}
\Lambda_1(\Sigma) = \bar\lambda_1(\Sigma, g_0).
\end{align*}
Then $g_0$ is induced from a (possibly branched) minimal isometric immersion into a sphere by first eigenfunctions. 
\end{theorem}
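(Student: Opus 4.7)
The plan is a first-variation argument for $\lambda_1$ in the space of metrics, combined with a convex-analytic extraction and Takahashi's theorem. The essential complication is that the first eigenvalue is generically multiple at a maximum, so $\lambda_1$ is only directionally (one-sidedly) differentiable as a function of the metric; nevertheless Kato--Rellich perturbation theory suffices to run the argument.

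I would fix a symmetric $2$-tensor $h$ supported away from the conical singularities of $g_0$ and consider the family $g_t=g_0+th$. For any first eigenfunction $u\in E_1:=\ker(\Delta_{g_0}-\lambda_1(g_0))$, normalized in $L^2(dv_{g_0})$, a direct computation yields
\begin{equation*}
\frac{d}{dt}\bigg|_{t=0}Q_{g_t}(u)=-\int_{\Sigma}\langle T_u,h\rangle_{g_0}\,dv_{g_0},\qquad T_u:=du\otimes du-\tfrac{1}{2}\bigl(|\nabla u|^2-\lambda_1(g_0)\,u^2\bigr)g_0.
\end{equation*}
Kato--Rellich perturbation theory for the eigenvalue branches out of $\lambda_1(g_0)$ then identifies the one-sided derivatives of $\bar\lambda_1(g_t)$ at $t=0$ with the extrema of this linear functional of $h$ over unit vectors in $E_1$, corrected by the first variation of $\vol(g_t)$. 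Because $g_0$ achieves $\Lambda_1(\Sigma)$, both one-sided derivatives are $\leq 0$, so for every area-preserving $h$ (that is, $\int_{\Sigma}\tr_{g_0}(h)\,dv_{g_0}=0$) the linear functional $u\mapsto\int_{\Sigma}\langle T_u,h\rangle_{g_0}\,dv_{g_0}$ must change sign on $E_1$. Equivalently, the origin lies in the convex hull of $\{T_u:u\in E_1,\,\|u\|_{L^2}=1\}$ when taken in the quotient of symmetric $2$-tensors by $\mathbb{R}\,g_0$.

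A Carath\'eodory-type extraction then yields finitely many first eigenfunctions $u_1,\dots,u_N\in E_1$ and weights $\alpha_i>0$ with $\sum_i\alpha_i=1$ such that
\begin{equation*}
\sum_{i=1}^{N}\alpha_i\,T_{u_i}=\mu\,g_0
\end{equation*}
for a constant Lagrange multiplier $\mu$ coming from the volume constraint. Setting $\phi_i:=\sqrt{\alpha_i}\,u_i$ and separating this identity into trace and traceless parts yields $\sum_i\phi_i^{\,2}\equiv c$ for some constant $c>0$ together with $\sum_i d\phi_i\otimes d\phi_i=\tfrac{c\lambda_1(g_0)}{2}\,g_0$. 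The first identity says $\Phi:=(\phi_1,\dots,\phi_N)$ maps $\Sigma$ into the sphere of radius $\sqrt{c}$; the second says $\Phi$ is weakly conformal with respect to $g_0$. Since the components of $\Phi$ are eigenfunctions of $\Delta_{g_0}$ with eigenvalue $\lambda_1(g_0)$, they are eigenfunctions of $\Delta_{\Phi^*g_{\can}}$ with eigenvalue $2/c$, so Takahashi's theorem \cite{takahashi} asserts that on the open set where $d\Phi\neq 0$, $\Phi$ is a minimal isometric immersion into the sphere of radius $\sqrt{c}$. The vanishing set of $d\Phi$ is isolated by unique continuation for the eigenvalue equation, giving the branch points.

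The main obstacle I anticipate is justifying each step of this argument in the presence of conical singularities of $g_0$. One must check that perturbations $h$ supported in the regular part span enough directions to force the convex-hull condition, that Kato--Rellich perturbation theory and unique continuation carry through for the Friedrichs extension of $\Delta_{g_0}$ on a surface with conical points, and that the map $\Phi$ extends continuously across the singular set and induces $g_0$ (up to scale) there. These verifications follow the framework of \cite{kokarev}, enabled by the fact that a conical singularity on a surface has zero capacity and is hence removable for the Dirichlet form, so the variational characterization of $\lambda_1$ and the interior regularity theory for eigenfunctions used throughout remain in force.
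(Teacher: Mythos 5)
The paper does not actually prove Theorem~\ref{extremal}; it quotes it from \cite{MR1415764}, \cite{MR2378458}, \cite{kokarev} with the remark that the argument persists for metrics with conical singularities, and your outline reproduces exactly the argument of those references: first variation of the Rayleigh quotient, Kato--Rellich one-sided derivatives on the first eigenspace, a Hahn--Banach/Carath\'eodory extraction placing $0$ in the convex hull of the stress-energy tensors $T_u$, and Takahashi's theorem. As a sketch it is essentially correct; the verifications you defer (that conical competitors do not exceed $\Lambda_1(\Sigma)$, that perturbations supported in the regular part suffice for the separation argument, and the behaviour at the singular set, where in fact applying $\Delta_{g_0}$ to $\sum_i\phi_i^2\equiv c$ gives $\sum_i|\nabla\phi_i|^2_{g_0}=\lambda_1 c$, so the zeros of $d\Phi$ are precisely the finitely many conical points and no unique continuation is needed) are genuine but standard.
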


We briefly review some results regarding $\bar\lambda_1$-maximal metrics (for results regarding {\it extremal} metrics, see the survey \cite{MR3203194} and the papers \cite{el2000riemannian, karpukhin2013nonmaximality, karpukhin2012spectral, karpukhinspectral, lapointe2008spectral, penskoi2010extremal, penskoi2013extremal, penskoi2015generalized}). By Theorem \ref{induced}, any $\bar\lambda_{1}$-maximal metric is induced by a (possibly branched) minimal immersion into a sphere. Hersch proved in 1970 that $\Lambda_1(\mathbb{S}^2)$ is achieved by any constant curvature metric \cite{MR0292357}. By the work of Li and Yau \cite{MR674407}, $\bar\lambda_1(\mathbb{RP}^2,g)\leq 12\pi$ for any metric $g$ with equality for constant curvature metrics. Indeed, the metric of constant curvature one on $\mathbb{RP}^2$ can be realized as the induced metric from a minimal embedding into $\mathbb{S}^4$ called the Veronese embedding. Since there is only one conformal class of metrics on $\mathbb{RP}^2$, Theorem \ref{thm:smooth_rigidity} shows that $\bar\lambda_1(\mathbb{RP}^2,g)\leq 12\pi$ with equality only if the metric is a constant curvature metric. In \cite{MR1415764}, Nadirashvili proved the existence of maximal metrics on the 2-torus (see also \cite{MR2514484}) and outlined a proof of existence for metrics on the Klein bottle. In the next section we discuss the cases of the 2-torus and the Klein bottle in more detail. Finally, the maximal metric is known for $\Sigma_2$, the orientable surface of genus 2. Nayatani and Shoda proved in~\cite{nayatani} that the metric on the Bolza surface proposed in~\cite{MR2202582} is maximal. As a result, $\Lambda_1(\Sigma_2) = 16\pi$.


 
\subsection{Main results}

We prove the following generalization of Theorem \ref{thm:smooth_rigidity} to the setting of {\it branched} minimal immersions. 

\begin{theorem}\label{unique}
Let $\Sigma$ be a closed surface endowed with a conformal class $c$. Then $c$ belongs to exactly one of the following categories:
\begin{itemize}
\item[1)] There does not exist $g\in c$ such that $g$ admits a branched minimal immersion to a sphere by first eigenfunctions;
\item[2)] There exists a unique $g\in c$ such that $g$ admits a branched minimal immersion by first eigenfunctions to $\mathbb{S}^m$ whose image is not an equatorial $2$-sphere;
\item[3)] There exists $g\in c$ such that $g$ admits a branched minimal immersion by first eigenfunctions to $\mathbb{S}^2$. In this case any two such immersions differ by a post-composition with a conformal automorphism of $\mathbb{S}^2$.
\end{itemize}
 \end{theorem}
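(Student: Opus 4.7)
The plan is to extend the classical Montiel-Ros-El Soufi-Ilias rigidity argument behind Theorem~\ref{thm:smooth_rigidity} to the branched setting, and then isolate the role played by $\mathbb{S}^2$-valued immersions in the equality analysis. Suppose $\Phi_i\colon (\Sigma, g_i)\to \mathbb{S}^{n_i}$, for $i=1,2$, are two branched minimal isometric immersions by first eigenfunctions with $[g_1]=[g_2]=c$. By Takahashi's theorem (extended across branch points), the coordinate functions satisfy $\Delta_{g_i} \Phi_i^\alpha = 2\Phi_i^\alpha$, so $\lambda_1(g_i)=2$ and $\bar\lambda_1(g_i)=2\vol(g_i)$. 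Write $g_2=e^{2\omega}g_1$ on the complement of the finite conical-singular set.

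The technical engine is the Li-Yau/Hersch variational comparison, adapted to conical singularities. The pushforward $(\Phi_2)_*\,dV_{g_1}$ is non-atomic (since $\Phi_2$ is branched and hence finite-to-one), so Hersch's center-of-mass lemma yields a conformal transformation $\gamma$ of $\mathbb{S}^{n_2}$ with $\int_\Sigma \gamma\circ \Phi_2\,dV_{g_1}=0$. The components of $\gamma\circ\Phi_2$ are then valid test functions for the Rayleigh quotient on $(\Sigma,g_1)$, giving
\[
2\vol(g_1) \;\leq\; \int_\Sigma |d(\gamma\circ\Phi_2)|^2_{g_1}\, dV_{g_1}.
\]
Conformal invariance of the $2$-dimensional Dirichlet energy, which extends across the finite set of conical singularities by a standard capacity argument for finite-energy maps, converts the right-hand side into $\int_\Sigma |d(\gamma\circ\Phi_2)|^2_{g_2}\, dV_{g_2}$. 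Since $\gamma\circ\Phi_2$ is conformal as a map from the Riemann surface $\Sigma$ to $\mathbb{S}^{n_2}$, this integral equals $2\,\mathrm{Area}(\gamma\circ\Phi_2)$.

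To convert this into a comparison of areas, note that $\mathrm{Area}(\gamma\circ\Phi_2)\leq V(n_2,\Phi_2)$, and that $\Phi_2$ being a first-eigenfunction immersion saturates the Li-Yau conformal-volume bound, so $V(n_2,\Phi_2)=\vol(g_2)$. Hence $\vol(g_1)\leq \vol(g_2)$; swapping the roles of $g_1$ and $g_2$ forces $\vol(g_1)=\vol(g_2)$ and saturates every inequality above. Equality in the Rayleigh quotient implies that each component of $\gamma\circ\Phi_2$ is a first eigenfunction on $(\Sigma,g_1)$ with eigenvalue $2$, and the converse Takahashi theorem gives $(\gamma\circ\Phi_2)^*g_{\mathrm{can}}=g_1$. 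Writing $\mu$ for the conformal factor of $\gamma$ on $\mathbb{S}^{n_2}$, we arrive at the key identity
\[
g_1 \;=\; (\mu\circ\Phi_2)^2\, g_2.
\]

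The main obstacle, and the genuinely new phenomenon in the branched setting, is analyzing this identity in terms of the image of $\Phi_2$. If $\gamma$ is an isometry of $\mathbb{S}^{n_2}$, then $\mu\equiv 1$ and $g_1=g_2$, so the two immersions differ by a rigid motion; this is case~2. For a non-isometric conformal $\gamma$, the factor $\mu$ is non-constant on $\mathbb{S}^{n_2}$, so the identity can hold only if the image $\Phi_2(\Sigma)$ lies in a level set of $\mu$. A rigidity analysis shows that, whenever $\Phi_2(\Sigma)$ is not contained in any totally geodesic $\mathbb{S}^2\subset \mathbb{S}^{n_2}$, the only way for $\mu\circ\Phi_2$ to match the conformal factor $e^{-\omega}$ relating $g_1$ and $g_2$ is for $\gamma$ to already be an isometry, again yielding $g_1=g_2$. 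When, on the other hand, $\Phi_2(\Sigma)\subset \mathbb{S}^2\subset\mathbb{S}^{n_2}$, the ambient conformal group $\mathrm{Conf}(\mathbb{S}^2)=\mathrm{PSL}(2,\mathbb{C})$ acts non-trivially beyond $\mathrm{Isom}(\mathbb{S}^2)=O(3)$, and post-composition with a non-isometric $\gamma$ produces a genuinely new branched minimal immersion by first eigenfunctions whose induced metric differs from $g_2$; this $\mathrm{PSL}(2,\mathbb{C})$-orbit is precisely case~3. Finally, applying the same dichotomy to a hypothetical $\mathbb{S}^2$-valued immersion paired with one whose image is not an equatorial $2$-sphere establishes the mutual exclusivity of cases~2 and~3.
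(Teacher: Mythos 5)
Your overall route is the same as the paper's (Hersch balancing, conformal invariance of the Dirichlet energy, comparison with the conformal volume, and an equality analysis in the spirit of El Soufi--Ilias), but the two places where the branched setting genuinely requires new work are asserted rather than proved. First, the identity $\vol_c(n_2,\Phi_2)=\vol(\Sigma,g_2)$ does not follow from ``saturating the Li--Yau bound'': saturation only gives the trivial direction $\vol(\Sigma,g_2)\le \vol_c(n_2,\Phi_2)$, whereas what you need is that the area of $(\gamma\circ\Phi_2)^*g_{\mathrm{can}}$ is non-increasing along the conformal flows $\gamma^a_t$ for a \emph{branched} minimal immersion. This is exactly Proposition~\ref{variation_formula}, whose proof requires extending the first variation formula across branch points (continuity of the tangent plane at branch points, Lemma~\ref{lemma:continuity}, together with the exhaustion/divergence-theorem argument showing the boundary contributions vanish), and it is the main technical content of the branched case. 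Second, your ``rigidity analysis'' asserting that a non-isometric $\gamma$ can occur only when the image is an equatorial $2$-sphere is precisely the equality/strict-monotonicity part of Proposition~\ref{variation_formula} (equality forces $A^\perp\equiv 0$, the integral curves of $A$ are great circles through $\pm a$, and the case where $a$ is the image of a branch point again needs Lemma~\ref{lemma:continuity}); your substitute reasoning via $g_1=(\mu\circ\Phi_2)^2 g_2$ ``forcing the image into a level set of $\mu$'' is not a proof --- from that identity alone one cannot conclude $\mu\circ\Phi_2$ is constant, and establishing exactly this is the point at issue.

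Beyond this, the statement of case 3) is not actually established. The claim that any two first-eigenfunction immersions to $\mathbb{S}^2$ differ by a conformal automorphism needs an argument such as Proposition~\ref{auxiliary_prop2}: equality of degrees (via the volume comparison) plus the fact that two conformal maps to $\mathbb{S}^2$ inducing the same metric differ by an isometry (the lemma on holomorphic maps with $|f_z|=|h_z|$). Your description of case 3) as a full $\mathrm{PSL}(2,\mathbb{C})$-orbit of first-eigenfunction immersions is moreover false: post-composition with a non-isometric conformal automorphism in general destroys the property that the components are first eigenfunctions, as Example~\ref{Bolza_example} shows. Finally, the mutual exclusivity of 2) and 3) is not a one-line corollary of ``the same dichotomy'': the paper first shows that if one immersion maps into an equatorial $2$-sphere while the other does not, then the two induced metrics must coincide, and then invokes Proposition~\ref{dichotomy} (Gauss' equation gives $K=1$, hence the second immersion is totally geodesic, and an open-closed argument places its image in an equatorial $2$-sphere) to reach a contradiction; nothing in your proposal plays the role of this last step.
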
 

\begin{remark}
\label{Klein}
If $\Sigma$ is not orientable, then $\Phi(\Sigma)$ can never be an equatorial $2$-sphere. Indeed, this would make $\Phi \colon \Sigma \to \mathbb{S}^2$ a branched cover, which is impossible. In Proposition \ref{Xi} we also prove that if $\Sigma$ is a 2-torus the image of a branched minimal immersion by first eigenfunctions cannot be an equatorial $2$-sphere. Thus, category 3) in Theorem \ref{unique} is not possible in these cases. 
\end{remark}

\begin{remark} 
\label{rem:dichotomy}
Theorem~\ref{unique} allows us to construct an example where a maximal metric for $\bar\lambda_1$ cannot be induced by a branched minimal immersion whose components form a basis in the $\lambda_1$-eigenspace. Indeed, in~\cite{nayatani} the authors showed that on a surface of genus $2$ there exists a family of maximal metrics for $\bar\lambda_1$ induced from a branched minimal immersion to $\mathbb{S}^2$. Moreover, there are metrics in the family such that the multiplicity of the first eigenvalue is equal to $5$. At the same time by Theorem~\ref{unique} such a metric can only be induced by a $3$-dimensional family of eigenfunctions.
\end{remark}

The following theorem was proved by Nadirashvili in~\cite{MR1415764} for the case of $\mathbb{T}^2$, and the same paper contains an outline of the proof for the Klein bottle. Later, Girouard completed some of the steps of this outline in~\cite{MR2514484}.

\begin{theorem}\label{TSmooth}
 The maximal values $\Lambda_1(\mathbb{T}^2)$ and $\Lambda_{1}(\mathbb{KL})$ are achieved by smooth Riemannian metrics. 
\end{theorem}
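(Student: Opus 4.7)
The plan is to combine the existence result of Matthiesen--Siffert (Theorem \ref{lit_existence}) with Theorem \ref{unique} above and the conformal-class analyses of Nadirashvili--Sire and Petrides. First, Theorem \ref{lit_existence} provides, for $\Sigma=\mathbb{T}^2$ or $\Sigma=\mathbb{KL}$, a metric $g$ attaining $\Lambda_1(\Sigma)$ that is smooth away from finitely many conical singularities. By Theorem \ref{extremal}, $g=\Phi^*g_{\mathrm{can}}$ for a branched minimal immersion $\Phi\colon\Sigma\to\mathbb{S}^m$ by first eigenfunctions, with the conical singularities of $g$ corresponding precisely to the branch points of $\Phi$. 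Hence the theorem reduces to producing a maximizing $\Phi$ with no branch points.

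Next, I would apply Theorem \ref{unique} to rule out the case in which $\Phi(\Sigma)$ is an equatorial $2$-sphere: for $\mathbb{KL}$ this is immediate from the non-orientability argument in Remark \ref{Klein}, and for $\mathbb{T}^2$ it is the content of Proposition \ref{Xi}. Consequently $[g]$ lies in category $2)$ of Theorem \ref{unique}, so $g$ is the unique metric in its conformal class admitting such an immersion, and $\Phi(\Sigma)$ is not contained in any totally geodesic $\mathbb{S}^2\subset\mathbb{S}^m$.

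With the equatorial case excluded, one invokes the conformal-class maximization of Nadirashvili--Sire \cite{MR3427416} and Petrides \cite{petrides}, together with the variational and moduli-theoretic arguments of Nadirashvili \cite{MR1415764}, supplemented by Girouard \cite{MR2514484} in the case of $\mathbb{KL}$, to identify the global maximizer explicitly: for $\mathbb{T}^2$ the flat equilateral torus (giving $\Lambda_1(\mathbb{T}^2)=8\pi^2/\sqrt{3}$), and for $\mathbb{KL}$ the metric arising from Lawson's bipolar construction on the Klein bottle. Both of these are smooth, which concludes the proof.

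The main obstacle, and precisely the gap that motivates the present paper, is excluding the equatorial $\mathbb{S}^2$ case. Earlier treatments implicitly assumed $\Phi$ to be non-degenerate in this sense, but Theorem \ref{unique} shows that branched immersions into $\mathbb{S}^2$ by first eigenfunctions can fail to uniquely determine the conformal metric, so this assumption requires a separate justification. Proposition \ref{Xi} and the non-orientability observation in Remark \ref{Klein} supply that justification for $\mathbb{T}^2$ and $\mathbb{KL}$, after which the remaining analysis proceeds as in the cited works.
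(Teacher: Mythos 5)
Your first two steps are fine: using Matthiesen--Siffert (Theorem \ref{lit_existence}) for existence is a legitimate shortcut (the paper deliberately avoids it, instead combining Girouard's degeneration bounds, the continuity of $\Lambda_1(\Sigma,[g])$ in the Teichm\"uller distance, and Petrides' conformal maximization, but its own remark concedes your route is available), and you correctly identify that Remark \ref{Klein} and Proposition \ref{Xi} are what exclude the equatorial $\mathbb{S}^2$ case, placing the conformal class in category 2) of Theorem \ref{unique}.

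However, the final step is a genuine gap, and it is circular. The statement to be proved is precisely that the maximizer has no conical points; you dispose of this by ``invoking the variational and moduli-theoretic arguments'' of Nadirashvili and Girouard to ``identify the global maximizer explicitly'' as the flat equilateral torus, resp.\ the bipolar Lawson metric, and then observing these are smooth. But those identifications rest on the rigidity Theorem \ref{thm:smooth_rigidity}, which applies to \emph{smooth} maximal metrics: Nadirashvili's argument that the torus maximizer is flat, and the El Soufi--Giacomini--Jazar classification on the Klein bottle, both presuppose the smoothness you are trying to establish. At no point do you explain why the maximizer $\widetilde g$, known only to be induced by a branched minimal immersion $\Phi$ into $\mathbb{S}^m$, $m\geq 3$, cannot actually have branch points. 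The missing idea, which is the paper's Step 2, is to exploit the uniqueness in category 2) dynamically: both $\mathbb{T}^2$ and $\mathbb{KL}$ carry a fixed-point-free $\mathbb{S}^1$-action $s_\theta$ by conformal diffeomorphisms, and $\Phi\circ s_\theta$ is again a branched minimal immersion by first eigenfunctions inducing $s_\theta^*\widetilde g$; since $s_\theta^*\widetilde g$ lies in the same conformal class, uniqueness forces $s_\theta^*\widetilde g=\widetilde g$, so $\widetilde g$ is a metric of revolution. A conical point would then be carried by the action into a whole circle of conical points, contradicting the fact that the singular set is finite. Without this (or an equivalent) argument, your proof does not establish smoothness.
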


Let us discuss the proof of Theorem~\ref{TSmooth} presented in~\cite{MR1415764}. Nadirashvili first shows that Theorem~\ref{lit_existence} holds for the torus, i.e. there exists a maximal metric possibly with conical singularities. Then he applies Theorem~\ref{thm:smooth_rigidity} to conclude that the maximal metric is flat. However, as we see from Theorem~\ref{unique} the conclusion of Theorem~\ref{thm:smooth_rigidity} does not hold for branched immersions and special care is needed if the maximal metric happens to be induced by a branched minimal immersion to $\mathbb{S}^2$. Our contribution to Theorem~\ref{TSmooth} is that we show that there are no branched minimal immersions by first eigenfunctions to $\mathbb{S}^2$ from either the 2-torus or the Klein bottle. While the case of the Klein bottle is elementary, see Remark~\ref{Klein}, additional considerations are required to settle the case of the 2-torus, see Proposition~\ref{Xi}.

Once Theorem~\ref{TSmooth} is proved, Nadirashvili's argument shows that the maximal metric on $\mathbb{T}^2$ is flat. Let us recall it in more detail. It follows by Theorem \ref{thm:smooth_rigidity} that any conformal transformation of a smooth maximal metric is an isometry. Since any metric on the 2-torus has a transitive group of conformal transformations, then any smooth maximal metric must have a transitive group of isometries and is therefore flat. It follows that a smooth maximal metric is a scalar multiple of the flat metric on the equilateral torus. In the same paper~\cite{MR1415764}, Nadirashvili used a similar argument to deduce that any smooth maximal metric on the Klein bottle must be a surface of revolution. Later, Jakobson, Nadirashvili, and I. Polterovich \cite{MR2209284} found a candidate for a smooth maximal metric for the Klein bottle (by proving the existence of a metric of revolution that was {\it extremal} for $\bar\lambda_1$). The metric they found corresponded to a bipolar surface of Lawson's $\tau_{3,1}$-torus. Then, in \cite{MR2259925}, El Soufi, Giacomini, and Jazar proved that this metric was the {\it only} smooth extremal metric on the Klein bottle.

\begin{remark}
Note that Theorem \ref{TSmooth} can be obtained by combining Theorem~\ref{unique} and Proposition~\ref{Xi} with the recent work of Matthiesen and Siffert \cite{matthiesen_preprint}. For completeness, we prove Theorem \ref{TSmooth} without the result of Matthiesen and Siffert and instead combine results of Girouard~\cite{MR2514484} with the fact that the functional $\Lambda_1(\Sigma,[g])$ is continuous on the moduli space of conformal classes of metrics on $\Sigma$. The latter fact seems to be well-known but we were unable to find a reference. We present its proof in Section~\ref{Klein_bottle}.
\end{remark}


As a result of the previous discussion we have the following corollaries of Theorem~\ref{TSmooth}.

\begin{corollary}[\cite{MR1415764}]
 The maximum for the functional  $\bar\lambda_1(\mathbb{T}^2,g)$ on the space of Riemannian metrics on a 2-torus $\mathbb{T}^2$ is attained if and only if the metric $g$ is homothetic to the flat metric $g_{eq}$ on the equilateral torus and has the following value:

\begin{align*}
\Lambda_1(\mathbb{T}^2)=8\pi^2/\sqrt{3}.
\end{align*}

\end{corollary}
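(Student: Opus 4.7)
The plan is to feed Theorem~\ref{TSmooth} into the uniqueness statement of Theorem~\ref{unique} and run the Nadirashvili scheme recalled earlier in the introduction. First, Theorem~\ref{TSmooth} furnishes a smooth maximizer $g_0$ with $\bar\lambda_1(\mathbb{T}^2,g_0)=\Lambda_1(\mathbb{T}^2)$, and Theorem~\ref{extremal} produces a branched minimal immersion $\Phi\colon(\mathbb{T}^2,g_0)\to\mathbb{S}^m$ by first eigenfunctions. Smoothness of $g_0$ forces $\Phi$ to have no branch points (otherwise $g_0$ would develop conical singularities at those points). By Proposition~\ref{Xi} the image $\Phi(\mathbb{T}^2)$ is not an equatorial $\mathbb{S}^2$, so the conformal class $[g_0]$ falls into case~2) of Theorem~\ref{unique}: $g_0$ is the unique metric in $[g_0]$ admitting such an immersion.

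Second, given any conformal diffeomorphism $\phi$ of $\mathbb{T}^2$, the pullback $\phi^*g_0$ lies in $[g_0]$ and admits the minimal immersion by first eigenfunctions $\Phi\circ\phi$; uniqueness then forces $\phi^*g_0=g_0$, so every conformal automorphism of $(\mathbb{T}^2,g_0)$ is an isometry. The conformal class $[g_0]$ is represented by a flat metric, and translations of $\mathbb{T}^2$ act transitively and conformally with respect to any metric in a flat conformal class. Thus the isometry group of $g_0$ acts transitively on $\mathbb{T}^2$, which in dimension $2$ forces $g_0$ to have constant sectional curvature; Gauss--Bonnet then pins this curvature to zero, so $g_0$ is flat.

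Finally, it remains to maximize $\bar\lambda_1$ among flat tori. Writing $(\mathbb{T}^2,g_0)=\mathbb{R}^2/\Lambda$ for some lattice $\Lambda\subset\mathbb{R}^2$, a standard Fourier computation gives $\lambda_1(g_0)=4\pi^2\min_{v\in\Lambda^*\setminus\{0\}}|v|^2$ and $\vol(\mathbb{T}^2,g_0)=\det\Lambda$, whence $\bar\lambda_1(g_0)=4\pi^2\,\det\Lambda\cdot\min_{v\in\Lambda^*\setminus\{0\}}|v|^2$ equals $4\pi^2$ times the squared Hermite invariant of $\Lambda^*$. This invariant is classically maximized in dimension~$2$ by hexagonal lattices, with maximal value $2/\sqrt{3}$, yielding $\Lambda_1(\mathbb{T}^2)=8\pi^2/\sqrt{3}$ and identifying the maximizer up to homothety with the equilateral flat torus. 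The main conceptual obstacles are the two inputs Theorem~\ref{TSmooth} and Proposition~\ref{Xi}; once these are in place, the remainder is a short assembly of the uniqueness-implies-isometry argument with the classical lattice optimization.
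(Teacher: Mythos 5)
Your proposal is correct and follows essentially the same route the paper takes (Nadirashvili's scheme as recalled in the introduction): smoothness of the maximizer from Theorem~\ref{TSmooth}, rigidity in the conformal class forcing conformal automorphisms to be isometries, transitivity of conformal translations giving flatness, and then the classical optimization over flat tori. The only cosmetic differences are that you invoke Theorem~\ref{unique} (with Proposition~\ref{Xi}) where the paper can simply use Theorem~\ref{thm:smooth_rigidity} once the metric is known to be smooth, and that you write out the dual-lattice/Hermite-constant computation which the paper leaves implicit.
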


\begin{corollary}[\cite{MR2209284, MR2259925}]
The maximum for the functional  $\bar\lambda_1(\mathbb{KL},g)$ on the space of Riemannian metrics on a Klein bottle $\mathbb{KL}$ is attained if and only if the metric $g$ is homothetic to a metric of revolution:
\begin{align*}
g_0=\frac{9 + (1 + 8 \cos^2 v)^2}{1 + 8 \cos^2 v}\Big(du^2 +\frac{dv^2}{1 + 8 \cos^2 v} \Big),
\end{align*}
$0 \leq u < \pi /2, 0 < v \leq \pi$ and has the following value:
\begin{align*}
\Lambda_1(\mathbb{KL})=12\pi E(2\sqrt{2}/3)\approx 13.365\pi,
\end{align*}
where $E(\cdot)$ is a complete elliptic integral of the second kind.
\end{corollary}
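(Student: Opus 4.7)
The plan is to combine Theorem~\ref{TSmooth} with Nadirashvili's rigidity argument and the classification of extremal rotationally invariant metrics on the Klein bottle. First, by Theorem~\ref{TSmooth}, $\Lambda_1(\mathbb{KL})$ is attained by a smooth Riemannian metric $g_0$. By Theorem~\ref{induced}, $g_0$ is induced from a minimal isometric immersion into a sphere by first eigenfunctions. Since $\mathbb{KL}$ is non-orientable, Remark~\ref{Klein} rules out the image being an equatorial $2$-sphere, so category 3) of Theorem~\ref{unique} cannot occur and Theorem~\ref{thm:smooth_rigidity} applies, giving full rigidity for $g_0$ within its conformal class.

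The next step is Nadirashvili's observation that $g_0$ must be rotationally symmetric. Every conformal class on $\mathbb{KL}$ carries a $1$-parameter group of conformal transformations (the translations $u \mapsto u + c$ inherited from the flat model). By Theorem~\ref{thm:smooth_rigidity}, each such conformal map must also be a $g_0$-isometry, so $g_0$ is invariant under a circle action and hence is a metric of revolution. This reduces the classification problem to rotationally invariant metrics on $\mathbb{KL}$.

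Among metrics of revolution, Jakobson–Nadirashvili–I.~Polterovich~\cite{MR2209284} produced the explicit $g_0$ displayed in the statement as an extremal for $\bar\lambda_1$, recognized as the bipolar surface of Lawson's $\tau_{3,1}$-torus. El~Soufi–Giacomini–Jazar~\cite{MR2259925} subsequently showed that, up to homothety, this is the only smooth rotationally invariant extremal metric on $\mathbb{KL}$. Combined with the previous step, this forces the maximal metric to be homothetic to the displayed $g_0$. The value $\Lambda_1(\mathbb{KL}) = 12\pi E(2\sqrt{2}/3)$ is then obtained by explicit evaluation: the area of $(\mathbb{KL}, g_0)$ over the fundamental domain $0 \leq u < \pi/2$, $0 < v \leq \pi$ reduces to a complete elliptic integral of the second kind of modulus $2\sqrt{2}/3$, and $\lambda_1(g_0)$ is read off from Takahashi's relation $\lambda_1 = 2/r^2$ for the associated minimal immersion.

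The main obstacle is the uniqueness step of El~Soufi–Giacomini–Jazar. Ruling out alternative rotationally invariant extremal candidates requires a careful analysis of the coupling between the admissible Fourier modes in $u$ of the first eigenfunctions and the radial profiles in $v$, together with a delicate ODE argument to show that no second smooth extremal rotationally symmetric metric exists. By contrast, the reduction from general metrics to metrics of revolution (via Theorems~\ref{TSmooth} and~\ref{thm:smooth_rigidity}) and the final evaluation of $\Lambda_1(\mathbb{KL})$ in terms of $E$ are comparatively routine once the earlier structure theorems are at hand.
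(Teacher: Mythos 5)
Your proposal is correct and follows essentially the same route as the paper: Theorem~\ref{TSmooth} gives a smooth maximizer, Theorem~\ref{thm:smooth_rigidity} (justified for $\mathbb{KL}$ since non-orientability/smoothness rules out the branched $\mathbb{S}^2$ case) forces the $\mathbb{S}^1$ of conformal translations to act by isometries so the maximal metric is a metric of revolution, and then the identification of the metric and the value is delegated to Jakobson--Nadirashvili--Polterovich \cite{MR2209284} and the uniqueness result of El Soufi--Giacomini--Jazar \cite{MR2259925}, exactly as in the paper's discussion.
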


There are inconsistencies in the literature regarding whether there is a complete proof that the extremal metric on the Klein bottle found in \cite{MR2209284} is indeed maximal. See, for instance, Remark 1.1 in \cite{MR2259925}. One of the goals of the present article is to eliminate this inconsistency.

\medskip

\noindent {\bf Paper outline.} The rest of the paper is organized as follows: In Section \ref{background} we provide the necessary background for studying branched minimal immersions into spheres and recall the definition of conformal volume. Section \ref{Proof1} contains the proofs of Theorem \ref{unique} and Proposition~\ref{Xi}.
In Section \ref{Klein_bottle} we prove Theorem \ref{TSmooth} and that the conformal spectrum is 
continuous on the moduli space of conformal classes of metrics on $\Sigma$. 
\medskip

\noindent {\bf Acknowledgements.} The authors are grateful to I. Polterovich and A. Girouard for suggesting this problem. The authors are also grateful to I. Polterovich, A. Girouard, G. Ponsinet, A. Penskoi, G. Kokarev, and Alex Wright for fruitful discussions and especially to A. Girouard for a careful first reading of this manuscript. 
This research is part of the third author's PhD thesis at the Universit\'{e} de Montr\'{e}al under the supervision of Iosif Polterovich.

\section{Background}
\label{background} 

\subsection{Branched immersions and conical singularities}\label{branched_immersions}
Given a surface $\Sigma$ endowed with a conformal structure one defines a metric $g$ with conical singularities by declaring that at finitely many points $\{p_1,...,p_N\} \subset \Sigma$ (which are referred to as {\it conical points}) the metric has the following form in conformal coordinates centered at $p_i$: $\rho_i(z)|z|^{2\beta_i} |dz|^2$, where $\beta_i >-1$ and $\rho_i(z)>0$ is smooth. The metric is singular in the sense that it becomes degenerate at the conical points. This approach is taken, for instance, in \cite{troyanov}. One can check that if $\rho = 1$ near the conical points then $g$ is isometric to a cone with cone angle $2 \pi (\beta_i+1)$. In this article we are primarily concerned with metrics with conical singularities that arise from branched minimal immersions into spheres. A good introductory reference for branched minimal immersions is \cite{royden}.

  Fix a compact surface $\Sigma$ equipped with a smooth Riemannian metric $g_0$ (without conical points). Let $\Phi \colon (\Sigma,g_0) \to (\mathbb{S}^n, g_{\mathrm{can}})$ be a smooth map that is harmonic and conformal away from points at which $D_p \Phi = 0$. We will refer to points $p$ at which $D_p \Phi = 0$ as {\it branch points} and call $\Phi$ a {\it branched conformal immersion}. Note that away from the branch points the quadratic form $g = \Phi^*g_{\mathrm{can}}$ is actually an inner product on the tangent space and makes $\Phi$ a minimal immersion. Thus, we say that $\Phi \colon (\Sigma,g) \to (\mathbb{S}^n, g_{\mathrm{can}})$ is a {\it branched minimal immersion} into a sphere. We will see that $g$ possesses conical singularities at the branch points. 

In a neighborhood of a branch point we can choose conformal coordinates $z = z_1+iz_2$ on $\Sigma$ centered at $p$ and coordinates $x_1,...,x_n$ centered at $\Phi(p)$ such that $\Phi(z)$ takes the form:
\begin{align*}
x_1+ix_2 = z^{m+1}+\sigma(z) \\
x_k = \phi_k(z); \hspace{5 pt} k \geq 3,
\end{align*}
for $m \geq 1$
such that $\sigma(z)$ and $\phi_k(z)$ are $o(|z|^{m+1})$ and $\frac{\partial \sigma}{\partial z_j}$ and $\frac{\partial \phi_k}{\partial z_j}$ are $o(|z|^{m})$ as $z \to 0$ (that this is possible follows from the discussion found in \cite[Section 2]{royden}). The integer $m$ is referred to as the {\it order} of the branch point. Moreover, there exist $C^{1,\alpha}$-coordinates (see Lemma 1.3 of \cite{royden}) $\widetilde{z}$, which we will refer to as {\it distinguished parameters}, in which the map $\Phi$ takes the form:
\begin{align*}
x_1+i x_2 = \widetilde{z}^{m+1}\\
x_k = \psi_k(z); \hspace{5 pt} k \geq 3,
\end{align*}  
with $\psi_k(z)$ possessing the same asymptotics as $\phi_k(z)$ as $z \to 0$. Note that the distinguished parameters are not an admissible coordinate system for the smooth structure on $\Sigma$ since $\widetilde{z}$ is related to $z$ by $\widetilde{z} = z \left[ 1+ z^{-(m+1)}\sigma(z) \right]^{1/(m+1)}$. By looking at the form $\Phi$ takes in distinguished parameters it is clear that $D_p \Phi \neq 0$ in a punctured neighborhood of a branch point. Thus, branch points are isolated. Moreover, since regular points form an open set, $\Sigma$ can only posses finitely many branch points. From the previous discussion we see that in conformal coordinates centered at a branch point the metric is of the form $\rho(z)|z|^{2m} |dz|^2$, with $\rho(z)>0$ smooth. In other words, near the branched point  the metric is conformal to the Euclidean cone of total angle $2\pi (m +1)$. We will refer to $m$ as the {\it order} of the conical singularity and will also refer to the branch points $p$ as {\it conical points}.  

We recall the following lemma, which allows one to define the tangent space to $\Phi(\Sigma)$ at the image of a conical point $\Phi(p)$. For simplicity, we state the lemma in the setting of branched conformal immersions into a round sphere. However, it holds in greater generality (see Lemma 3.1, 3.2 and the remark on page 771 of \cite{royden} for the proof).
\begin{lemma}
\label{lemma:continuity}
Let $\Phi \colon \Sigma \to \mathbb{S}^n$ be a branched minimal immersion into a round sphere with a branch point at $p \in \Sigma$. Let $w$ and $x$ be distinguished parameters at $p$ and $\Phi(p)$, respectively. Define the tangent space to $\Phi(p)$ in distinguished parameters as the $x_1,x_2$-plane.
\begin{enumerate}[(i)]
\item If $\{p_n\}$ is a sequence in $\Sigma$ such that $p_n \to p$ and $\Phi$ is regular at $p_n$, then the tangent plane to $\Phi(\Sigma)$ at $\Phi(p_n)$ tends to the $x_1,x_2$-plane in distinguished parameters. Consequently, the Gauss map, which assigns to each point $q \in \Sigma$ the tangent plane to $\Phi(\Sigma)$ at $\Phi(q) \in \mathbb{S}^n$ is continuous on all of $\Sigma$. 
\item The definition of the tangent space to $\Phi(p)$ does not depend on the choice of distinguished parameters. 
\end{enumerate} 
\end{lemma}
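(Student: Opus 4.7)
The plan is to compute the Gauss map of $\Phi$ explicitly in distinguished parameters and verify that it extends continuously through the branch point. Let $w$ be distinguished parameters on $\Sigma$ near $p$ and $x=(x_1,\ldots,x_n)$ distinguished parameters on $\mathbb{S}^n$ near $\Phi(p)$, so that near $p$ the map has the normal form $x_1+ix_2=w^{m+1}$ and $x_k=\psi_k(w)$ for $k\geq 3$, with $\psi_k=o(|w|^{m+1})$ and $\partial\psi_k/\partial w_j=o(|w|^m)$ as $w\to 0$.

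First I would compute, at a regular point corresponding to $w\neq 0$, the tangent vectors $v_j(w)=D\Phi(\partial/\partial w_j)$ for $j=1,2$. From the normal form and the identities $\partial w^{m+1}/\partial w_1=(m+1)w^m$ and $\partial w^{m+1}/\partial w_2=i(m+1)w^m$, the first two coordinates of $v_j$ are $(m+1)$ times real and imaginary parts of $w^m$ (with appropriate signs), while the remaining $n-2$ coordinates are $o(|w|^m)$. To identify $\myspan(v_1,v_2)$ as an element of the Grassmannian, I would compute the bivector $v_1(w)\wedge v_2(w)\in\Lambda^2\mathbb{R}^n$. The $\partial/\partial x_1\wedge\partial/\partial x_2$ coefficient is exactly $(m+1)^2(\re(w^m)^2+\im(w^m)^2)=(m+1)^2|w|^{2m}$, while each remaining basis bivector collects at least one factor of $o(|w|^m)$ and is hence $o(|w|^{2m})$. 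Thus
\begin{align*}
v_1(w)\wedge v_2(w) = (m+1)^2|w|^{2m}\Bigl(\tfrac{\partial}{\partial x_1}\wedge\tfrac{\partial}{\partial x_2} + o(1)\Bigr),
\end{align*}
so after normalization the bivector tends to $\partial/\partial x_1\wedge\partial/\partial x_2$. This yields the first sentence of (i); since the Gauss map is automatically continuous at regular points, continuity on all of $\Sigma$ follows.

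Part (ii) I would then deduce formally from (i): any choice of distinguished parameters at $p$ and $\Phi(p)$ produces an $x_1,x_2$-plane that, by (i), coincides with the intrinsic limit $\lim_{p_n\to p}T_{\Phi(p_n)}\Phi(\Sigma)$. Since this limit does not depend on any coordinate system, different choices of distinguished parameters define the same tangent plane at $\Phi(p)$.

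The only real subtlety is preliminary: distinguished parameters on $\Sigma$ are merely $C^{1,\alpha}$, not smooth, so one must justify computing $D\Phi$ in them. This is fine because the image of $D\Phi$ is an intrinsic subspace whose description requires only $C^1$ compatibility of the parameter change. Beyond this point, the argument is a straightforward asymptotic unfolding of the normal form, in the spirit of Lemmas 3.1--3.2 of \cite{royden}.
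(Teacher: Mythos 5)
Your proof is correct. Note that the paper does not prove this lemma at all -- it simply cites Lemmas 3.1--3.2 of \cite{royden} -- and your computation (expanding $D\Phi$ in the normal form $x_1+ix_2=w^{m+1}$, $x_k=\psi_k$ with $\partial\psi_k/\partial w_j=o(|w|^m)$, and showing the normalized bivector $v_1\wedge v_2$ converges to $\partial/\partial x_1\wedge\partial/\partial x_2$, with (ii) following since the limit of tangent planes is coordinate-free) is essentially the standard argument from that reference, with the $C^{1,\alpha}$ issue for the parameter change correctly dispatched.
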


Let $g$ be a metric on $\Sigma$ with conical singularities. Thus, $g = f g_0$, where $g_0$ is a smooth Riemannian metric on $\Sigma$ and $f$ is a smooth function on $\Sigma$ that is nonzero except at possibly finitely many points. One can define the first Laplace eigenvalue corresponding to this singular metric using the variational characterization:
\begin{equation} \label{eigenvalue}
\lambda_1(g) = \inf_{\stackrel{u\in H^1(\Sigma, g)}{ u\perp 1}} R(u, g),
\end{equation}
where:
\begin{equation*}
R(u, g) = \frac{ \int_{\Sigma} | \nabla u |^2 \dvol(g_0) }{  \int_\Sigma u^2 \dvol(g)}
\end{equation*}
is the Rayleigh quotient and $H^1(\Sigma, g)$ is the completion of the set:
\begin{align*}
\left \{ u \in L^2(\Sigma, \dvol(g)); \quad \int_{\Sigma} |\nabla u |^2 \dvol(g_0) <\infty \right \}
\end{align*}
with respect to the norm:
\begin{align*}
 \| u \|^2_{H^1(g)} =  \int_\Sigma u^2 \dvol(g) +  \int_{\Sigma} | \nabla u |^2 \dvol(g_0). 
\end{align*}
When a metric $g_0$ is smooth, we will regard $H^1(\Sigma, g_0)$ as the usual Sobolev space. Note that, essentially by the conformal invariance of the Dirichlet energy,  
 $H^1(\Sigma,g) = H^1(\Sigma,g_0)$, meaning that they are equal as sets, and the norms define the same topology  (see \cite[Proposition 3]{troyanov}).
A function $u \in H^1(\Sigma, g)$ for which the infimum of the Rayleigh quotient in (\ref{eigenvalue}) is achieved is called \textit{a first eigenfunction}. For metrics with conical singularities, first eigenfunctions exist (see \cite{kokarev} Proposition 3.1). By the usual elliptic regularity argument (see, for instance, \cite[Corollary 8.11]{MR1814364}), the first eigenfunctions are smooth and satisfy the following equation:
\begin{align*}
\Delta_{g_0} u = \lambda_1(g) f u. 
\end{align*}

\subsection{Conformal volume}

The notion of conformal volume was introduced by Li and Yau to prove bounds on $\lambda_1$ that depend only on the genus \cite{MR674407}. It will be used in our poof of Theorem \ref{unique}. Throughout, let $G_n$ denote the group of conformal diffeomorphisms of the $n$-sphere with its canonical metric and let $\Phi \colon \Sigma \to \mathbb{S}^n$ be a conformal immersion with possible branch points.\begin{definition}
\begin{enumerate}[(i)]
\item The {\it conformal $n$-volume} of $\Phi$ is given by:
\begin{equation*}
\vol_c(n, \Phi) := \sup_{\gamma \in G_n} \vol(\Sigma, (\gamma \circ \Phi)^*g_{\mathrm{can}}).
\end{equation*} 
\item The {\it conformal $n$-volume} of $\Sigma$, denoted $\vol_c(n, \Sigma)$, is the infimum of $\vol_c(n, \Phi)$ over all branched conformal immersions $\Phi\colon \Sigma \to \mathbb{S}^n$. 
\end{enumerate}
\end{definition}
\begin{remark}
In the recent preprint~\cite{kokarev_volc} Kokarev used conformal volume to obtain bounds for higher eigenvalues $\lambda_k$.
\end{remark}

\section {Proofs of main results}\label{Proof1}

To prove Theorem \ref{unique} we follow the same steps used by El Soufi and Ilias to prove the analog of Theorem \ref{unique} for minimal immersions without branch points (see Corollary 3.3 in \cite{ilias}). While some propositions easily generalize to the setting of branched minimal immersions (compare Proposition \ref{n-conformal-volume} with \cite[Theorem 2.2]{ilias}) others do not generalize completely (compare Proposition \ref{auxiliary_prop} with \cite[Proposition 3.1]{ilias}).  

\begin{proposition}
\label{variation_formula}
Let $(\Sigma, g)$ be a compact Riemannian surface possibly with conical singularities and suppose that there exists a branched minimal isometric immersion $\Phi$ of $(\Sigma, g)$ into a round sphere of dimension $n$, then:
\begin{equation*}
\vol(\Sigma, g) = \vol_c(n, \Phi) \geq \vol_c(n, \Sigma).
\end{equation*}
Moreover, if $\Phi(\Sigma)$ is not an equatorial 2-sphere, then $\vol(\Sigma, g)> \vol(\Sigma, (\gamma \circ \Phi)^* g_{\mathrm{can}})$ for every $\gamma \in G \backslash \myO(n+1)$. 
\end{proposition}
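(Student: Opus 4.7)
The approach adapts the argument of El Soufi-Ilias (Theorem~2.2 in~\cite{ilias}) to the branched setting. The inequality $\vol_c(n,\Phi) \geq \vol_c(n,\Sigma)$ is immediate from the definition of $\vol_c(n,\Sigma)$ as an infimum over branched conformal immersions. For the equality $\vol(\Sigma,g) = \vol_c(n,\Phi)$, the direction $\vol(\Sigma,g) \leq \vol_c(n,\Phi)$ follows by taking $\gamma = \mathrm{id}$ in the defining supremum and using that $\Phi^*g_{\mathrm{can}} = g$ away from the finite set of branch points.

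The reverse inequality $\vol(\Sigma,(\gamma \circ \Phi)^* g_{\mathrm{can}}) \leq \vol(\Sigma,g)$ for every $\gamma \in G_n$ is the technical heart. Every $\gamma \in G_n$ factors as $\gamma = O \circ F_a$ with $O \in \myO(n+1)$ and $F_a$ a standard Möbius transformation parametrized by $a \in B^{n+1}$ (with $F_0 = \mathrm{id}$). Since $\myO(n+1)$ preserves the canonical metric, $\vol((O \circ F_a \circ \Phi)^*g_{\mathrm{can}}) = \vol((F_a \circ \Phi)^*g_{\mathrm{can}})$, so it suffices to prove
\begin{equation*}
A(a) := \vol(\Sigma,(F_a \circ \Phi)^* g_{\mathrm{can}}) = \int_\Sigma \mu_a(\Phi(x))^2 \dvol(g) \leq A(0),
\end{equation*}
for all $a \in B^{n+1}$, where $\mu_a$ is the length-scaling conformal factor of $F_a$. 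Since $F_a \circ \Phi$ is conformal from $(\Sigma,g)$ into $\mathbb{S}^n$, $A(a)$ equals half the Dirichlet energy of $F_a \circ \Phi$ with respect to $g$. Minimality of $\Phi$ gives the Takahashi equation $\Delta_g \Phi_i = 2 \Phi_i$ component-wise, which in the branched setting holds distributionally via the Sobolev identification $H^1(\Sigma,g) = H^1(\Sigma,g_0)$ from Section~\ref{background}. Combining these ingredients with the explicit Möbius factor $\mu_a(p) = (1 - |a|^2)/|p - a|^2$ and the sphere constraint $\sum_i \Phi_i^2 = 1$ gives $A(a) \leq A(0)$ following El Soufi-Ilias; since the computation involves only integrations against $\dvol(g)$, it transfers to the branched case because branch points form a measure-zero set.

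For the strict inequality when $\Phi(\Sigma)$ is not an equatorial 2-sphere, I analyze the equality case. Suppose $A(a_0) = A(0)$ for some $a_0 \neq 0$. Real-analyticity of $A$ on $B^{n+1}$ together with the maximality of $a = 0$ forces $t \mapsto A(t a_0)$ to be constant on $[0,1]$, and hence $A$ is constant on all of $B^{n+1}$ by analyticity. This rigidity makes the area of $(\gamma \circ \Phi)^*g_{\mathrm{can}}$ invariant under the full group $G_n$, which by analyzing the decomposition of $\mu_a^2$ against the coordinate functions $\Phi_i$ forces $\Phi(\Sigma)$ to lie in an equatorial 2-sphere. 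The dimensional reason $n = 2$ is special is that $\int_{\mathbb{S}^2} \mu_a^2 \dvol(g_{\mathrm{can}}) = 4\pi$ for every $a$, so any branched cover of an equatorial $\mathbb{S}^2 \subset \mathbb{S}^n$ automatically has its area preserved by the full ambient conformal group. The main obstacle will be confirming that the El Soufi-Ilias inequality passes cleanly through the conical singularities; a secondary difficulty is the rigidity analysis that isolates the equatorial 2-sphere case.
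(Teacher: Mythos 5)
Your reduction to $\gamma = O\circ F_a$ and the statement of the target inequality $A(a)\le A(0)$ match the overall strategy, but the actual content of the proposition is missing at the decisive step. You assert that $A(a)\le A(0)$ "follows El Soufi--Ilias" from the Takahashi equation, the explicit Möbius factor $\mu_a$, and the sphere constraint, and that it "transfers to the branched case because branch points form a measure-zero set." There is no such direct pointwise/integral computation in \cite{ilias}: their proof establishes monotonicity of $t\mapsto\vol(\Sigma,(\gamma^a_t\circ\Phi)^*g_{\mathrm{can}})$ along the one-parameter conformal flows via the \emph{first variation of area}, using minimality (the mean-curvature term) and the sign of $u-u\circ\gamma^a_t$. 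Extending that first variation formula to a branched immersion is precisely the nontrivial point here: one must integrate a divergence term $\mydiv_{\gamma(\widehat\Sigma)}(A^\top)$ over the punctured surface, and at branch points the induced metric degenerates like $|z|^{2m}$ and $A^\top$ extends only continuously (Lemma \ref{lemma:continuity}), so Stokes' theorem cannot be applied blindly; one needs an exhaustion by domains with controlled boundary and an estimate showing the boundary terms vanish (they are $O(n^{-(2m+1)})$ in the paper's argument). "Measure zero" addresses none of this, because the obstacle is the validity of the integration by parts, not the size of the branch set. Without this, your key inequality is unproved.

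The equality analysis also does not go through as written. From $A\le A(0)$ and $A(a_0)=A(0)$ alone you cannot conclude that $t\mapsto A(ta_0)$ is constant (that requires the monotonicity along the flow, i.e. exactly the derivative formula you skipped), and even granting constancy on a segment, a real-analytic function on $B^{n+1}$ constant on a line segment need not be constant on the ball, so "hence $A$ is constant on all of $B^{n+1}$ by analyticity" is false. Finally, the step "analyzing the decomposition of $\mu_a^2$ against the coordinate functions forces $\Phi(\Sigma)$ to lie in an equatorial $2$-sphere" is only a gesture; the paper's mechanism is geometric: vanishing of the derivative forces $A^\perp\equiv 0$ on $\Phi(\Sigma)$, so the gradient field $A$ is tangent, its integral curves are great circles through $\pm a$, and $\Phi(\Sigma)$ is the exponential image of the tangent plane at $a$ — including the delicate case where $a$ is a branch value, handled again via the continuity of the tangent plane from Lemma \ref{lemma:continuity}. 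Your closing observation about degree and $4\pi$ explains why the equatorial $2$-sphere is exceptional, but it is not a substitute for this rigidity argument.
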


\begin{proof}
Given a unit-vector $a \in \mathbb{R}^{n+1}$, let $A$ denote the projection of the vector onto the tangent space of each point $\mathbb{S}^n$. Then $A$ is the gradient vector field of the function $u = \langle \cdot, a \rangle$. Let $(\gamma_t^a)_t$ be the time-$t$ map for the flow associated to $A$. Then $(\gamma^a_t)^*g_{\mathrm{can}} = e^{2f} g_{\mathrm{can}}$, with $f$ a smooth function on the sphere. Recall that for every $\gamma \in G_n$ there exists $r \in \myO(n+1)$ and $\gamma_t^a$ such that $\gamma = r \circ \gamma_t^a$ (see the Lemma on page 259 of \cite{ilias}). Thus, it suffices to show that:
\begin{equation*}
\vol(\Sigma, (\gamma_t^a \circ \Phi)^*g_{\mathrm{can}}) \leq \vol(\Sigma, \Phi^*g_{\mathrm{can}}) \hspace{5 pt} \text{for every }a \in \mathbb{S}^n \text{ and }t\geq0. 
\end{equation*}

First, we need to verify that we can make use of the first variation formula for the area of $\Sigma$.  
Let $\{ p_1,..., p_N \}$ denote the branch points of $\Phi$. Then 
\begin{equation*}
\widehat{\Sigma} :=  \Sigma \backslash \{p_1,...,p_N \}
 \end{equation*}
 is an (open) Riemannian manifold and $\Phi$ induces a minimal isometric immersion of $\widehat{\Sigma}$ into $\mathbb{S}^n$. For convenience, we will often identify $\widehat{\Sigma}$ and its image under $\Phi$.
 In coordinates centered at a branch point of order $m$, the volume form is given by:
 \begin{align*}
\dvol((\gamma_t^a \circ \Phi)^* g_{\mathrm{can}})(z) = \rho(t,z) |z|^{2 m} |d z \wedge d \overline{z}|,
 \end{align*}
 where $\rho(t,z)$ is a smooth positive function. Thus, it is clear that the volume form is differentiable (smooth) in $t$ and the derivative with respect to $t$ is identically zero at the branch points. Set $\gamma = \gamma^a_{t_0}$. Away from the singular points, we have the usual expression for the derivative of the volume form:
 \begin{align}
 \label{variation1}
\restr{\frac{d}{dt}}{t=t_0} \dvol((\gamma^a_t \circ \Phi)^* g_{\mathrm{can}})(x) = - \left \langle A_{\gamma(x)}, H_{\gamma(x)}^{\gamma\left(\widehat{\Sigma} \right)} \right \rangle \dvol((\gamma \circ \Phi)^*g_{\mathrm{can}})(x)  \\
+ \mydiv_{\gamma (\widehat{\Sigma})} (A_{\gamma(x)}^\top)  \dvol((\gamma \circ \Phi)^*g_{\mathrm{can}})(x) \nonumber , 
 \end{align}
 where $H_{\gamma(x)}^{\gamma \left(\widehat{\Sigma} \right)}$ is the mean curvature vector for $\gamma \left(\widehat{\Sigma}\right)$ and $A^\top$ is the projection of $A$ onto the tangent space of $\gamma \left( \widehat{\Sigma} \right)$. Since $\Phi$ is minimal away from the branch points, one can compute the expression for the mean curvature vector explicitly away from the branch points (see page 260 of \cite{ilias}):
 \begin{align*}
 H_{\gamma(x)}^{\gamma \left(\widehat{\Sigma} \right)} = -2 e^{-2f} D\gamma((\nabla f^{\perp})_x).
 \end{align*}
 Moreover, by Lemma \ref{lemma:continuity}, $\nabla f^{\perp}$ extends to a continuous vector field on all of $\Phi(\Sigma)$. Thus, $H^{\gamma(\Sigma)}$ extends to a continuous vector field on the branch points. It follows that the first term in the right hand side of \eqref{variation1} is zero at a branch point. Thus, the second term in the right hand side extends to something continuous and zero at the branch points. 
 
 Now we integrate both sides of \eqref{variation1} to recover the usual first variation formula. However, since $A_x^{\top}$ only extends to a continuous vector field on $\Phi(\Sigma)$, some care is required to show that the integral of the second term in the right hand side of \eqref{variation1} is zero. Notice that the integrals of the left hand side and the first term in the right hand side of \eqref{variation1} converge as improper integrals. Thus, it suffices to exhibit an exhaustion of $\gamma(\widehat{\Sigma})$ by compact sets with smooth boundary such that the integral of the second term on the right hand side of \eqref{variation1} converges to zero. Let $\{\Omega_n\}_{n=1}^\infty$ be an exhaustion of $\widehat{\Sigma}$ by compact sets with smooth boundary such that for each connected component of $\partial \Omega_n$ there exist distinguished parameters $(z_1,z_2)$ centered at a singular point such that the image of the connected component of $\partial \Omega_n$ is given by $z_1^2+z_2^2 = \frac{1}{n^2}$, for $n$ large enough. Then $\left \{\gamma(\Omega_n) \right \}$ is an exhaustion of $\gamma(\widehat{\Sigma})$. The Divergence Theorem yields:
 \begin{align*}
 \int_{\gamma(\Omega_n)} \mydiv_{\gamma(\Omega_n)}\left(A^\top_x\right) \dvol(g_{\mathrm{can}}) &= \int_{\partial \gamma(\Omega_n)} \langle A^\top_x, N_x \rangle \mathrm{d}s \\
 &= \int_{\partial \Omega_n} \langle A^\top_{\gamma(x)}, N_{\gamma(x)} \rangle e^{2f} \mathrm{d}s \\
 &= \int_{\partial \Omega_n} \langle D \gamma (A_x)^\top, N_{\gamma(x)} \rangle e^{2 f} \mathrm{d}s \\
 &= \int_{\partial \Omega_n} \langle D \gamma (A_x^\top), N_{\gamma(x)} \rangle e^{2 f} \mathrm{d}s,
 \end{align*}
 where $N$ is the outward pointing unit normal vector field along $\partial \gamma(\Omega_n)$ and $\mathrm{d}s$ is the length element along $\partial \gamma(\Omega_n)$.
 Again by Lemma \ref{lemma:continuity}, $A_x^\top$ extends to a continuous vector field on $\Phi(\Sigma)$, then the Cauchy-Schwartz inequality shows that the integral is $O((1/n)^{2 m+1})$. Thus, we see that, as an improper integral, $\int_{\Sigma}   \mydiv_{\gamma(\Sigma)} (A^\top_{\gamma(x)})  \dvol((\gamma \circ \Phi)^*g_{\mathrm{can}})=0$.
 Integrating both sides of \eqref{variation1} yields the usual first variation formula:
 \begin{equation*}
\restr{\frac{d}{dt}}{t=t_0} \vol(\Sigma, (\gamma^a_t \circ \Phi)^*g_{\mathrm{can}}) = - \int_{\Sigma} \langle H_{\gamma(x)}^{\gamma(\widehat{\Sigma})}, A_{\gamma(x)} \rangle \enskip \dvol\left((\gamma \circ \Phi)^*g_{\mathrm{can}} \right).
\end{equation*}   
At this point, the calculation done in the proof of Theorem 1.1 of \cite{ilias} applies:
\begin{equation}
\label{variation2}
\restr{\frac{d}{dt}}{t=t_0} \vol(\Sigma, (\gamma_t^a \circ \Phi)^*g_{\mathrm{can}}) = 2 \int_{\Sigma} \frac{u - u \circ \gamma}{1 - u^2} |A^{\perp}|^2 \enskip \dvol((\gamma \circ \Phi)^*g_{\mathrm{can}}),
\end{equation}
where the integrand is extended by continuity at the branch points. Since $u(x) \leq u(\gamma(x))$, we conclude that:
\begin{equation*}
\restr{\frac{d}{dt}}{t=t_0} \vol(\gamma_t^a(\widehat{\Sigma})) \leq 0.
\end{equation*} 
Thus, $\vol(\gamma_t^a(\Phi(\Sigma)))$ is non-increasing. 

Now suppose that there exists $a$ and $t_0>0$ such that $\vol(\gamma_{t_0}^a(\Phi(\Sigma))) = \vol(\Phi(\Sigma))$. Then $\restr{\frac{d}{dt}}{t=s} \vol(\gamma_t^a(\Phi(\Sigma))) = 0$ for $0 \leq s \leq t_0$. From this observation and \eqref{variation2} we conclude that $A^{\perp} = 0$ on $\Phi(\Sigma)$. Thus, $A$ restricts to a vector field on $\widehat{\Sigma}$. Observe that the integral curves of $A$ are great circles inside $\mathbb{S}^n$ connecting $a$ and $-a$. Therefore, $a,-a \in \Phi(\Sigma)$. If $a$ is a regular value of $\Phi$, then $\Phi(\Sigma)$ is just given by the image of $T_a\Phi(\Sigma)$ under the Riemannian exponential map of $\mathbb{S}^n$ based at $a$. So $\Phi(\Sigma)$ is an equatorial 2-sphere. 

Now suppose that $a$ corresponds to a singular value of $\Phi$. Again, by Lemma \ref{lemma:continuity} we may define the tangent space to $\Phi(\Sigma)$ at $a$ in $T_a\mathbb{S}^n$. Let $V$ denote this subspace. Given $p \in \Phi(\widehat{\Sigma})$ sufficiently close to $a$, let $\alpha \colon [0,1] \to \mathbb{S}^n$ be the minimizing geodesic connecting $p$ and $a$. Then $\alpha((0,1))$ is contained in $\Phi(\widehat{\Sigma})$. Moreover, since $\alpha'(t)$ is in the tangent space to $\Phi(\widehat{\Sigma})$ for every $t\in(0,1)$, then $\alpha'(1) \in V$. This shows that $\Phi(\Sigma)$ is again the image of $V$ under the Riemannian exponential map of $\mathbb{S}^n$ at $a$. Thus, $\Phi(\Sigma)$ is a 2-sphere.         
\end{proof}

The following proposition is a generalization of Theorem 2.2 in \cite{ilias} to the setting of branched minimal immersions. See also Theorem 1 in \cite{MR674407}. The proof of Theorem 2.2 in \cite{ilias} carries through without changes to the setting of branched minimal immersions.

\begin{proposition}
\label{n-conformal-volume}
Suppose $(\Sigma, g)$ is a Riemannian surface with possible conical singularities. For all $n$ such that the conformal $n$-volume is defined, we have:
\begin{equation*}
\bar\lambda_1(\Sigma, g) \leq 2 \vol_c(n, \Sigma). 
\end{equation*}
Equality holds if and only if $(\Sigma, g)$ admits, up to homothety, a branched minimal immersion into a sphere by first eigenfunctions. 
\end{proposition}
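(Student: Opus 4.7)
The plan is to adapt the classical Li--Yau/Hersch argument to the branched setting. Fix a branched conformal immersion $\Phi\colon \Sigma \to \mathbb{S}^n$; I will produce a Möbius transformation $\gamma \in G_n$ so that the components of $\gamma\circ\Phi$ become admissible test functions for the Rayleigh quotient defining $\lambda_1(g)$, and then sum over components.

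\textbf{Hersch's trick in the branched setting.} For each $y$ in the open unit ball $B^{n+1}\subset \mathbb{R}^{n+1}$, let $\gamma_y\in G_n$ denote the standard conformal automorphism of $\mathbb{S}^n$ carrying $y$ to the origin (e.g., the map coming from hyperbolic translation of the ball model). Consider the vector-valued center of mass
\begin{equation*}
F(y) \;=\; \frac{1}{\vol(\Sigma,g)}\int_\Sigma \gamma_y\circ \Phi \; \dvol(g)\;\in\;\mathbb{R}^{n+1}.
\end{equation*}
Because $\Phi$ has only finitely many branch points and $g$ is an $L^1$-metric, the integral is well defined, and $F\colon \overline{B^{n+1}}\to \overline{B^{n+1}}$ is continuous with $F|_{\mathbb{S}^n}=\mathrm{id}$. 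A standard degree argument then yields $y_0\in B^{n+1}$ with $F(y_0)=0$. Set $\Psi=\gamma_{y_0}\circ\Phi=(\psi_1,\dots,\psi_{n+1})$; by construction each component satisfies $\int_\Sigma \psi_i \dvol(g)=0$, so each $\psi_i$ is orthogonal to constants.

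\textbf{Plugging into the Rayleigh quotient.} The map $\Psi$ is again a branched conformal immersion, with $\Psi^* g_{\mathrm{can}}=\mu g$ for some nonnegative function $\mu$ vanishing only at the finitely many branch points. The pointwise identity $\sum_i \psi_i^2 \equiv 1$ and the conformal relation $\sum_i |\nabla \psi_i|_g^2 = 2\mu$ (which holds for conformal maps of a surface into a Riemannian manifold) are valid away from the finite branch set and extend integrably. Using each $\psi_i\in H^1(\Sigma,g)$ as a test function in \eqref{eigenvalue} and summing gives
\begin{equation*}
\lambda_1(g)\,\vol(\Sigma,g) \;=\; \lambda_1(g)\sum_i \int_\Sigma \psi_i^2 \dvol(g)\;\leq\; \sum_i \int_\Sigma |\nabla \psi_i|_g^2 \dvol(g_0) \;=\; 2\vol(\Sigma,\Psi^*g_{\mathrm{can}}).
\end{equation*}
Taking the supremum over $\gamma\in G_n$ on the right yields $\bar\lambda_1(\Sigma,g)\leq 2\vol_c(n,\Phi)$, and infimizing over $\Phi$ gives the desired inequality.

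\textbf{The equality case.} Suppose $\bar\lambda_1(\Sigma,g)=2\vol_c(n,\Sigma)$. Equality forces the Rayleigh quotient of each $\psi_i$ (for the optimally centered $\Psi$) to equal $\lambda_1(g)$, so every $\psi_i$ is a first eigenfunction. The relation $\Delta_{g_0}\psi_i=\lambda_1(g)f\psi_i$ combined with $\sum \psi_i^2=1$ then gives (by a Takahashi-type computation carried out away from the branch locus and extended across it by continuity) $\mu = \tfrac{\lambda_1(g)}{2}f$, so that $\Psi^*g_{\mathrm{can}} = \tfrac{\lambda_1(g)}{2}\,g$, i.e.\ $\Psi$ is, up to homothety, a branched isometric minimal immersion by first eigenfunctions. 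The converse direction is Proposition \ref{variation_formula}: if $(\Sigma,g)$ admits such an immersion $\Phi$, then $\vol(\Sigma,g)=\vol_c(n,\Phi)\geq \vol_c(n,\Sigma)$, and combined with the inequality this forces equality.

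\textbf{Main obstacle.} The analytic steps (conformality identity, integration by parts, Takahashi's equation) must all be justified in the presence of conical singularities. The content here is essentially the observation that $H^1(\Sigma,g)=H^1(\Sigma,g_0)$ (noted in Section \ref{branched_immersions}) and that the branch set has capacity zero, so the finite branch locus is invisible to both the Dirichlet energy and the volume integrals. The Hersch trick itself is topological and transfers verbatim once one knows the center-of-mass map is continuous, which follows from dominated convergence since $|\gamma_y\circ\Phi|\leq 1$ uniformly.
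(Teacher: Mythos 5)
Your argument follows the same Hersch/Li--Yau/El~Soufi--Ilias route that the paper invokes (it simply cites Theorem~2.2 of El~Soufi--Ilias as carrying over to the branched setting), and the inequality part together with the ``if'' direction of the equality case is fine: the centering map, the conformal identity $\sum_i|\nabla\psi_i|^2_g=2\mu$, the negligibility of the finite branch locus, and the use of Proposition~\ref{variation_formula} for the converse all go through as you describe.

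The gap is in the ``only if'' direction of the equality case. You assert that $\bar\lambda_1(\Sigma,g)=2\vol_c(n,\Sigma)$ ``forces the Rayleigh quotient of each $\psi_i$ (for the optimally centered $\Psi$) to equal $\lambda_1(g)$,'' but $\Psi=\gamma_{y_0}\circ\Phi$ was constructed from an \emph{arbitrary} branched conformal immersion $\Phi$, while $\vol_c(n,\Sigma)$ is an infimum over all such immersions, which you have not shown to be attained. For a fixed $\Phi$ your chain only gives $\lambda_1(g)\vol(\Sigma,g)\le 2\vol(\Sigma,\Psi^*g_{\mathrm{can}})\le 2\vol_c(n,\Phi)$, and $\vol_c(n,\Phi)$ can be strictly larger than $\vol_c(n,\Sigma)$, so nothing collapses. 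Concretely, for the round metric on $\mathbb{S}^2$ equality holds in the proposition ($8\pi=2\cdot 4\pi$), yet if you start from a degree-two rational map $\Phi$ one has $2\vol(\Sigma,\Psi^*g_{\mathrm{can}})=16\pi$, and the components of the centered map are certainly not all first eigenfunctions. To close the gap you must either prove (or assume) that the infimum defining $\vol_c(n,\Sigma)$ is attained, or run a minimizing-sequence argument: take $\Phi_k$ with $\vol_c(n,\Phi_k)\to\vol_c(n,\Sigma)$, center each to $\Psi_k$, extract a weak $H^1$-limit $\Psi$ (strong in $L^2$, so $|\Psi|\equiv1$ a.e.\ and the mean-zero conditions persist); the collapsing inequalities plus lower semicontinuity then show every component of $\Psi$ is a first eigenfunction (or zero) and that the convergence is in fact strong in $H^1$, whence the Hopf differential of $\Psi$ vanishes as the $L^1$-limit of those of the conformal maps $\Psi_k$; only then is $\Psi$ a nonconstant conformal harmonic, i.e.\ branched minimal, map to which your Takahashi computation applies to give $\Psi^*g_{\mathrm{can}}=\tfrac{\lambda_1(g)}{2}\,g$. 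Without some such step the forward implication is unproved.
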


We generalize Proposition 3.1 of \cite{ilias} to the setting of branched minimal immersions. However, the statement is complicated by the fact that the image of a branched minimal immersion can be an equatorial $2$-sphere.  

\begin{proposition}
\label{auxiliary_prop}
Let $(\Sigma, g)$ be a surface with possible conical singularities. Moreover, suppose that the metric $g$ is induced from a branched minimal immersion $\Phi$ into a sphere. Then every metric with possible conical singularities $\widetilde{g}$ conformal to $g$ satisfies the following:
\begin{align*}
\bar\lambda_1(\Sigma,\widetilde{g})\leq 2 \vol(\Sigma, g).
\end{align*}
Criteria for equality are as follows:
\begin{itemize}
\item If the image of $\Phi$ {\em is not} an equatorial $2$-sphere, then equality holds if and only if the components of $\Phi$ are first eigenfunctions and $\widetilde{g}$ is homothetic to $g$.

\item If the image of $\Phi$ {\em is} an equatorial $2$-sphere, then equality holds if and only if there exists a conformal automorphism $\gamma$ of $\mathbb{S}^2$ such that $\widetilde{g}$ is homothetic to $(\gamma\circ\Phi)^*g_{\mathrm{can}}$ and the components of $\gamma\circ\Phi$ are first eigenfunctions.

\end{itemize} 
\end{proposition}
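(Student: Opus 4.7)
The plan is to follow the El Soufi--Ilias strategy from \cite[Proposition 3.1]{ilias}, generalized to the branched setting. The main inequality comes from the Hersch renormalization combined with the conformal invariance of the two-dimensional Dirichlet energy and Proposition~\ref{variation_formula}, while the equality analysis splits along whether $\Phi(\Sigma)$ is an equatorial $2$-sphere, with the rigidity half of Proposition~\ref{variation_formula} playing the decisive role in the non-equatorial case.

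First I would establish the inequality. A topological degree argument (Hersch's lemma) furnishes a conformal automorphism $\gamma\in G_n$ such that each component $(\gamma\circ\Phi)_i$ has zero mean against $\dvol(\widetilde g)$. Being smooth and mean-zero, these components are admissible test functions in the variational characterization \eqref{eigenvalue}. Testing $\lambda_1(\widetilde g)$ against each component, summing the resulting inequalities using $\sum_i (\gamma\circ\Phi)_i^2\equiv 1$, and invoking conformal invariance of the Dirichlet energy in dimension two together with the identity $\sum_i\int_\Sigma|\nabla\psi_i|^2\,\dvol(g_0)=2\,\vol(\Sigma,\psi^*g_{\mathrm{can}})$ valid for any weakly conformal map $\psi\colon\Sigma\to\mathbb{S}^n$, one obtains
\[
\bar\lambda_1(\Sigma,\widetilde g)\ \leq\ 2\,\vol\bigl(\Sigma,(\gamma\circ\Phi)^*g_{\mathrm{can}}\bigr).
\]
Proposition~\ref{variation_formula} then gives $\vol(\Sigma,(\gamma\circ\Phi)^*g_{\mathrm{can}})\leq \vol_c(n,\Phi)=\vol(\Sigma,g)$, which yields the desired bound.

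Next, suppose equality holds. Then each $(\gamma\circ\Phi)_i$ realizes the infimum in the Rayleigh quotient, so the components of $\gamma\circ\Phi$ are first eigenfunctions of $\widetilde g$; moreover $\vol(\Sigma,(\gamma\circ\Phi)^*g_{\mathrm{can}})=\vol(\Sigma,g)$, so $\gamma$ attains the supremum defining $\vol_c(n,\Phi)$. If $\Phi(\Sigma)$ is \emph{not} an equatorial $2$-sphere, Proposition~\ref{variation_formula} forces $\gamma\in\myO(n+1)$, so $(\gamma\circ\Phi)^*g_{\mathrm{can}}=g$ and, since $\gamma$ acts orthogonally on coordinates, the components of $\Phi$ itself are first eigenfunctions of $\widetilde g$. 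To deduce that $\widetilde g$ is homothetic to $g$, I would write $g=fg_0$ and $\widetilde g=\widetilde f\,g_0$ with respect to a smooth background metric $g_0\in[g]$, and combine the Takahashi-type identity $\Delta_{g_0}\Phi_i=2f\,\Phi_i$ (valid on all of $\Sigma$ since $\Phi$ is smooth through the branch points) with the eigenvalue equation $\Delta_{g_0}\Phi_i=\lambda_1(\widetilde g)\widetilde f\,\Phi_i$; the pointwise identity $\sum_i\Phi_i^2\equiv 1$ forces some $\Phi_i$ to be nonzero at every point, so $\widetilde f/f\equiv 2/\lambda_1(\widetilde g)$, whence $\widetilde g$ is a constant multiple of $g$.

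In the remaining case, when $\Phi(\Sigma)$ is an equatorial $2$-sphere, Proposition~\ref{variation_formula} no longer rules out $\gamma\notin\myO(n+1)$. Setting $\Phi':=\gamma\circ\Phi$, the same Takahashi-type argument applied to the branched minimal immersion $\Phi'$ (whose image is still a great $2$-sphere) shows that $\widetilde g$ is homothetic to $(\Phi')^*g_{\mathrm{can}}=(\gamma\circ\Phi)^*g_{\mathrm{can}}$ and that the components of $\gamma\circ\Phi$ are first eigenfunctions of $\widetilde g$; identifying the image $2$-sphere with $\mathbb{S}^2$ allows one to view $\gamma$ as a conformal automorphism of $\mathbb{S}^2$, as required. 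The technical points I anticipate are verifying that the Hersch renormalization carries over to the branched setting (routine, since the argument is purely topological and uses only that $\Phi$ is continuous with $|\Phi|\equiv 1$) and tracking the Takahashi equation through the conical points; the genuinely new feature---namely, the need to separate Cases~1 and~2---is dictated entirely by the rigidity statement in Proposition~\ref{variation_formula}.
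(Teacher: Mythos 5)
Your inequality part and your non-equatorial equality analysis follow the paper's argument essentially step for step (Hersch renormalization, conformal invariance of the Dirichlet energy, Proposition \ref{variation_formula}, then the Takahashi identity to force the conformal factor to be constant), and they are correct. The gap is in the equatorial case. There you set $\Phi'=\gamma\circ\Phi$, where $\gamma$ is the Hersch renormalization in $G_n$, and you assert that $\Phi'$ is a branched minimal immersion whose image is still a great $2$-sphere, so that the Takahashi identity $\Delta_{(\Phi')^*g_{\mathrm{can}}}\Phi'=2\Phi'$ applies. Neither assertion is automatic: post-composition with a conformal transformation of $\mathbb{S}^n$ does not preserve harmonicity/minimality, and a M\"obius transformation of $\mathbb{S}^n$ maps a great $2$-sphere to a round, in general small, $2$-sphere; if $\gamma(S)$ were small, $\Phi'$ could not be minimal in $\mathbb{S}^n$ at all, since small spheres have nonzero mean curvature. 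So the identity you invoke is precisely what must be proved at that point. The correct order (the one in the paper) is the reverse: equality in the Rayleigh-quotient test first shows that the components of $\Psi=\gamma\circ\Phi$ are first eigenfunctions of $\widetilde g$; since $\Psi$ is weakly conformal and $\Delta_{\widetilde g}\Psi=\lambda_1(\widetilde g)\Psi$ is normal to the sphere, $\Psi$ is harmonic, hence a branched minimal immersion, and only then does Takahashi with respect to $g'=\Psi^*g_{\mathrm{can}}$ give $\lambda_1(\widetilde g)\Psi=2e^{-2\omega}\Psi$ and the homothety; minimality then also forces the image of $\Psi$ to be a great $2$-sphere, which is what allows one, after composing with an ambient isometry, to regard $\gamma$ as a conformal automorphism of $\mathbb{S}^2$ as in the statement. (Alternatively, you could perform the Hersch renormalization inside the $3$-plane containing $\Phi(\Sigma)$, using automorphisms of that $\mathbb{S}^2$ only; such compositions do preserve branched minimality, and then your argument runs as written.)

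A secondary omission: the criteria are stated as ``if and only if,'' and you only argue the ``only if'' half. The converse in the non-equatorial case is immediate (if the components of $\Phi$ are first eigenfunctions then $\lambda_1(g)=2$, so $\bar\lambda_1(\Sigma,\widetilde g)=\bar\lambda_1(\Sigma,g)=2\vol(\Sigma,g)$), but in the equatorial case one additionally needs the degree--area identity $\vol(\Sigma,(\gamma\circ\Phi)^*g_{\mathrm{can}})=4\pi|\deg(\gamma\circ\Phi)|=4\pi|\deg\Phi|=\vol(\Sigma,g)$, i.e.\ that conformal automorphisms of $\mathbb{S}^2$ preserve the absolute value of the degree, to conclude that equality indeed holds.
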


\begin{remark}
Note that in the second case the coordinates of $\Phi$ are not necessarily first eigenfunctions, see Example~\ref{Bolza_example} below.
\end{remark}

\begin{proof}
Let $\widetilde{g}\in [g]$ be another metric with possible conical singularities from the conformal class of the metric $g$. Let $\Phi_i$ denote the $i$-th component function corresponding to $\Phi$ (as a map from $\Sigma$ to $\mathbb{S}^n \subset \mathbb{R}^{n+1}$). According to the proof of Theorem 1 in \cite{MR674407}, there exists a conformal automorphism of $\mathbb{S}^n$, denoted $\gamma$, such that for every $i$ we have:
\begin{align}
\label{ineq_0}
\int_\Sigma (\gamma \circ \Phi)_i \dvol(\widetilde{g}) = 0.
\end{align}
Set $\Psi = \gamma \circ  \Phi$. Then using the conformal invariance of the Dirichlet energy and the variational characterization of Laplace eigenvalues, we have:
\begin{align}
\label{ineq_1}
\lambda_1(\widetilde{g}) &\leq \frac{\sum_i \int_\Sigma |d \Psi_i |_{\widetilde g}^2\, \dvol{(\widetilde g)}}{\sum_i \int_{\Sigma} \Psi_i^2\, \dvol{(\widetilde{g})}} \\
&= \frac{2 \vol(\Sigma, (\gamma \circ \Phi)^* g_{\mathrm{can}})}{\vol(\Sigma, \widetilde{g})}\\
&\leq 2 \vol(\Sigma, g) \vol(\Sigma, \widetilde{g})^{-1}
\label{ineq_2},
\end{align}
where the second inequality follows from Proposition \ref{variation_formula}. The desired inequality follows. 

Now assume that the equality is achieved, i.e. inequalities~\eqref{ineq_1} and~\eqref{ineq_2} turn into equalities and one has 
\begin{align}
\label{ineq_3}
\bar\lambda_1(\Sigma,\widetilde{g})= 2 \vol(\Sigma, g).
\end{align} 
By Proposition~\ref{n-conformal-volume} $\bar\lambda_1(\Sigma, \widetilde{g})\leq 2 \vol_c(n, \Sigma)$. Combining with equality~\eqref{ineq_3}, this yields
\begin{align*}
\vol(\Sigma, g)\leq \vol_c(n,\Sigma).
\end{align*} 
At the same time, by Proposition~\ref{variation_formula}, the reverse inequality is true. Therefore, one has an equality.

Assume that the image is not an equatorial $2$-sphere. Then by Proposition~\ref{variation_formula} $\gamma$ is an isometry. Since any isometry of the sphere is linear, equality~\eqref{ineq_0} is satisfied with $\gamma = \mathrm{id}$. Together with equality~\eqref{ineq_1} this yields that coordinates of $\Phi$ are first eigenfunctions for the metric $\widetilde g$. If $\widetilde g = e^{2\omega}g$ then
\begin{align}
\label{ineq_4}
\lambda_1(\widetilde g) \Phi = \Delta_{\widetilde g}\Phi = e^{-2\omega}\Delta_g\Phi = e^{-2\omega}\lambda_1(g)\Phi.
\end{align}
We conclude that in this case $\omega$ is constant and $\widetilde g$ is homothetic to $g$.

Now suppose that the image is an equatorial $2$-sphere. In this case we cannot conclude that $\gamma$ is an isometry. Nevertheless, equalities in~\eqref{ineq_0} and~\eqref{ineq_1} imply that coordinates of $\Psi$ are first eigenfunctions for $\widetilde g$. Setting $g' = \Psi^*g_{\mathrm{can}}$ and $\widetilde g = e^{2\omega}g'$ we obtain similarly to~\eqref{ineq_4},
\begin{align*}
\lambda_1(\widetilde g) \Psi = \Delta_{\widetilde g}\Psi = e^{-2\omega}\Delta_{g'}\Psi = 2e^{-2\omega}\Psi.
\end{align*}
We conclude that in this case $\omega$ is constant and $\widetilde g$ is homothetic to $g'$.

Let us prove the converse to the equality statements. If the components of $\Phi$ are first eigenfunctions then $\lambda_1(g) = 2$. Since $\tilde g$ is homothetic to $g$ one has
$$
\bar\lambda_1(\Sigma,\widetilde g) = \bar\lambda_1(\Sigma, g) = 2\vol(\Sigma,g).
$$

Suppose that the image of $\Phi$ is an equatorial $2$-sphere. Set $\Psi = \gamma\circ\Phi$ then after rescaling we may assume $\widetilde g = \Psi^*g_{\mathrm{can}}$ and $\vol(\Sigma,g) = 4\pi|\deg\Phi| = 4\pi|\deg\Psi| = \vol(\Sigma,\widetilde g)$, since conformal transformations preserve the absolute value of the degree. If the components of $\Psi$ are first eigenfunctions then $\lambda_1(\tilde g) = 2$ and one has
$$
\bar\lambda_1(\Sigma,\widetilde g) =8\pi|\deg\Psi| = 8\pi|\deg\Phi| = 2\vol(\Sigma,g).
$$

\end{proof}

The following proposition is proved in \cite[Theorem 6]{MR1178529}. We reprove it here using a slightly different approach.
\begin{proposition}
\label{auxiliary_prop2}
Suppose that $\Phi\colon\Sigma\to\mathbb{S}^2$ is a branched minimal immersion by first eigenfunctions. Then
\begin{itemize}
\item[(i)]
For any other conformal map $\Psi\colon\Sigma\to\mathbb{S}^2$ one has $|\deg\Psi| \geq |\deg\Phi|$;
\item[(ii)] 
 If $|\deg\Psi| = |\deg\Phi|$ then there exists a conformal transformation $\gamma$ such that $\Psi = \gamma\circ\Phi$.
\end{itemize}
\end{proposition}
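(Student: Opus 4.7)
The plan is to apply Proposition \ref{auxiliary_prop} with $\Psi$ in the role of the branched minimal immersion appearing in its statement, and the original metric $g = \Phi^*g_{\mathrm{can}}$ in the role of the conformal metric $\widetilde g$. This role-reversal is permissible: any non-constant conformal map $\Psi\colon\Sigma\to\mathbb{S}^2$ is automatically a branched conformal immersion in the sense of Section \ref{branched_immersions} (being (anti)holomorphic from the Riemann surface $\Sigma$ to $\mathbb{S}^2=\mathbb{CP}^1$, it is harmonic and conformal away from its branch points), and $g$ lies in the same conformal class on $\Sigma$ as the metric $\Psi^*g_{\mathrm{can}}$ induced by $\Psi$. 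Proposition \ref{auxiliary_prop} then yields
\[
\bar\lambda_1(\Sigma, g)\leq 2\vol(\Sigma,\Psi^*g_{\mathrm{can}}) = 8\pi|\deg\Psi|,
\]
using $\vol(\Sigma,\Psi^*g_{\mathrm{can}})=4\pi|\deg\Psi|$ (since conformal automorphisms of $\mathbb{S}^2$ preserve $|\deg|$). For part (i), Takahashi's theorem applied to $\Phi$ gives $\lambda_1(g)=2$, whence $\bar\lambda_1(\Sigma, g) = 2\vol(\Sigma, g) = 8\pi|\deg\Phi|$. Combining yields $|\deg\Phi|\leq|\deg\Psi|$.

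For part (ii), the hypothesis $|\deg\Psi|=|\deg\Phi|$ saturates the inequality above. Since $\Psi$ is non-constant, its image is all of $\mathbb{S}^2$, placing us in the ``equatorial $2$-sphere'' case of Proposition \ref{auxiliary_prop}. Its equality criterion then produces $\gamma\in G_2$ such that $g$ is homothetic to $(\gamma\circ\Psi)^*g_{\mathrm{can}}$ and the components of $\gamma\circ\Psi$ are first eigenfunctions of $g$. Comparing the two areas, $\vol(\Sigma,g)=4\pi|\deg\Phi|$ and $\vol(\Sigma,(\gamma\circ\Psi)^*g_{\mathrm{can}})=4\pi|\deg\Psi|$, forces the homothety factor to equal $1$, giving $\Phi^*g_{\mathrm{can}} = (\gamma\circ\Psi)^*g_{\mathrm{can}}$.

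The final step---and the part I expect to require the most care---is a rigidity statement for branched conformal immersions into $\mathbb{S}^2$: two such immersions $f,h\colon\Sigma\to\mathbb{S}^2$ inducing the same pullback of $g_{\mathrm{can}}$ must differ by a post-composition with an isometry of $\mathbb{S}^2$. On a small open set $U\subset\Sigma$ containing no branch points of $f$, the transition map $R := h\circ(f|_U)^{-1}$ is a local conformal isometry of $(\mathbb{S}^2,g_{\mathrm{can}})$ and so extends uniquely to $R\in\myO(3)$; then $h$ and $R\circ f$ are two (anti)holomorphic maps $\Sigma\to\mathbb{CP}^1$ that agree on $U$, and hence throughout $\Sigma$ by the identity principle. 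Applying this with $f=\Phi$ and $h=\gamma\circ\Psi$ produces $R\in\myO(3)\subset G_2$ with $\gamma\circ\Psi=R\circ\Phi$, so $\Psi = \gamma'\circ\Phi$ with $\gamma' := \gamma^{-1}\circ R\in G_2$, completing the proof.
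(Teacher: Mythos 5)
Your proof is correct, and its skeleton is the same as the paper's: both parts are read off from Proposition \ref{auxiliary_prop} applied with the arbitrary conformal map $\Psi$ (viewed as a branched minimal immersion into $\mathbb{S}^2$) inducing $g=\Psi^*g_{\mathrm{can}}$ and with $\widetilde g=\Phi^*g_{\mathrm{can}}$, using $\lambda_1(\Phi^*g_{\mathrm{can}})=2$ and $\vol=4\pi|\deg|$; equality of degrees then forces equality in Proposition \ref{auxiliary_prop}, the ``equatorial $2$-sphere'' branch applies, and the volume count kills the homothety factor, giving $\Phi^*g_{\mathrm{can}}=(\gamma\circ\Psi)^*g_{\mathrm{can}}$. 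Where you diverge is the final rigidity step, i.e.\ upgrading equality of pullback metrics to equality of maps up to an isometry: the paper reduces to two meromorphic functions with $|f_z|=|h_z|$ and integrates the resulting relation $f_z=e^{i\alpha}h_z$ to get $f=e^{i\alpha}h+c$, whereas you argue geometrically that away from branch points the transition map $h\circ(f|_U)^{-1}$ is a local isometry of the round sphere, hence extends to an element of $\myO(3)$, and then invoke the identity principle for (anti)holomorphic maps to propagate the identity $h=R\circ f$ from $U$ to all of $\Sigma$. Your variant is coordinate-free and applies verbatim to equality of the \emph{round} pullbacks (the paper's lemma, as literally stated, concerns $|f_z|=|h_z|$, i.e.\ the flat pullbacks, so your route avoids that translation step), and you also apply the equality case directly rather than ``switching the roles'' of $g$ and $\widetilde g$ as the paper phrases it; the paper's complex-analytic lemma, on the other hand, is self-contained and elementary. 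Either way the conclusion $\Psi=\gamma'\circ\Phi$ with $\gamma'=\gamma^{-1}\circ R$ is justified, so I see no gap.
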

\begin{proof}
This proposition is a direct corollary of Proposition~\ref{auxiliary_prop}. To prove (i), we apply Proposition \ref{auxiliary_prop} for metrics 
$\widetilde g = \Phi^*g_{\mathrm{can}}$ and $g = \Psi^*g_{\mathrm{can}}$. Then $\lambda_1(\widetilde g) = 2$ and we conclude
 $$
 8\pi |\deg\Phi| = \bar\lambda_1(\Sigma,\widetilde g)\leq 2\vol(\Sigma,g) = 8\pi |\deg\Psi|.
 $$
 
 If $|\deg\Phi| = |\deg\Psi|$, then we switch the roles of $g$ and $\widetilde g$ and observe that we have an equality, i.e. by Proposition~\ref{auxiliary_prop} there exists a conformal automorphism $\gamma_0$ such that 
 \begin{equation}
 \label{metric_equality}
 (\gamma_0\circ\Phi)^*g_{\mathrm{can}}= \Psi^*g_{\mathrm{can}}.
 \end{equation}
 
 We would like to show that it implies the existence of an isometry $I$ of $\mathbb{S}^2$ such that 
$\Psi = I\circ\gamma_0\circ\Phi$. 
 
\begin{lemma} 
Let $f$ and $h$ be two holomorphic maps $\Sigma\to\mathbb{S}^2$ (i.e. meromorphic functions) such that for any choice of local coordinates one has $|f_z| = |h_z|$. Then there exists $\alpha\in\mathbb{R}$ and $c\in\mathbb{C}$ such that $f = e^{i\alpha}h + c$.
\end{lemma}
\begin{proof}
First, note that the condition of the lemma implies that that $f$ and $h$ have the same singular sets.
Let $p\in\Sigma$ be any regular point of $f$ and $h$, i.e. $df(p)\ne 0$ and $dh(p)\ne 0$. Let $z$ be local coordinates, then there exists a real-valued function $\alpha(z)$ such that 
$ f_z = e^{i\alpha(z)}h_z$. Taking $\partial_{\bar z}$ of both parts we obtain
$$
i(\partial_{\bar z}\alpha)e^{i\alpha}h_z = 0.
$$
Since $h_z\ne 0$ in a neighborhood of $p$, one concludes that $\alpha$ is a real-valued holomorphic function. Thus, $\alpha$ is a constant. Integrating the equality $f_z = e^{i\alpha}h_z$, we obtain an equality $f = e^{i\alpha}h + c$ valid in a neighborhood of $p$. Since it is an equality between two meromorphic functions, by unique continuation it is valid everywhere on $\Sigma$.
\end{proof}

By taking conjugates if necessary, we can assume that $\gamma_0\circ\Phi$ and $\Psi$ are both holomorphic. Equality~\eqref{metric_equality} guarantees that we can apply the previous lemma to these functions. The conclusion of the lemma then provides a desired isometry $I$. Setting $\gamma = I\circ \gamma_0$ concludes the proof.
\end{proof}

\begin{example} 
\label{Bolza_example}
In this example we demonstrate that the application of conformal transformations does not in general preserve the property ``coordinate functions are the first eigenfunctions." 

Let $\mathcal S$ be a Bolza surface and let $\Pi\colon\mathcal S\to\mathbb{S}^2$ be the corresponding hyperelliptic projection.
By~\cite{nayatani}, $\Pi$ is given by first eigenfunctions. Let us consider instead $\Pi_t = \gamma_t\circ\Pi$, where $\gamma_t = \cfrac{z+it}{1-itz}$, $t\in[0,1)$ is a conformal transformation moving the points of $\mathbb{S}^2$ towards the point $i$ along the shortest geodesic (the point $-i$ does not move). We claim that for $t$ close to $1$ the first eigenvalue 
$\lambda_1(\mathcal S,\Pi_t^*g_{\mathrm{can}})$ is close to $0$. Informally, it can be explained in the following way. As $t$ tends to $1$ the images of the branch points of $\Pi_t$ are getting closer and closer together. As a result, for large $t$ the surface $(\Sigma,\Pi_t^*g_{\mathrm{can}})$ looks like two spheres glued together with three small cylinders. To make this argument precise, we prove the following proposition.
\end{example}

\begin{proposition}
Suppose that $\Phi\colon\Sigma\to \mathbb{S}^2$ is a holomorphic map of degree $d$ such that the images of all the branch points lie in an open disk $D_r$ of radius $r$. Then $\lambda_{d-1}(\Sigma,\Phi^*g_{\mathrm{can}}) = o(1)$ as $r\to 0$. 
\end{proposition}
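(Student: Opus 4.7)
\medskip

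\noindent \textbf{Proof proposal.} The plan is to use the min-max characterization of eigenvalues by constructing $d$ test functions with pairwise disjoint supports, one on each ``sheet'' of the covering. The smallness of the associated Rayleigh quotients will come from the fact that the logarithmic capacity of a small disk in $\mathbb{S}^2$ tends to zero.

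The first step is a topological reduction: since every branch value of $\Phi$ lies in $D_r$, the restriction
\[
\Phi\colon \Phi^{-1}(\mathbb{S}^2\setminus D_r)\longrightarrow \mathbb{S}^2\setminus D_r
\]
is an unbranched $d$-sheeted covering. Because $\mathbb{S}^2\setminus D_r$ is a closed topological disk, this covering is trivial, so
\[
\Phi^{-1}(\mathbb{S}^2\setminus D_r)=V_1\sqcup\cdots\sqcup V_d,
\]
with each $\Phi|_{V_i}$ a conformal diffeomorphism onto $\mathbb{S}^2\setminus D_r$.

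Next I would fix a reference radius $R_0>0$ (e.g.\ $R_0=1/2$) and, for $r<R_0$, use a standard logarithmic cut-off $\chi_r\colon \mathbb{S}^2\to[0,1]$ equal to $0$ on $\overline{D_r}$ and to $1$ on $\mathbb{S}^2\setminus D_{R_0}$, satisfying the classical capacity estimate
\[
\int_{\mathbb{S}^2}|d\chi_r|^2_{g_{\mathrm{can}}}\,\dvol_{g_{\mathrm{can}}} = O\!\left(\frac{1}{\log(R_0/r)}\right).
\]
Define $f_i\colon \Sigma\to[0,1]$ by $f_i=\chi_r\circ\Phi$ on $V_i$ and $f_i\equiv 0$ elsewhere. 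Because $\chi_r$ vanishes on $\overline{D_r}$, each $f_i$ is continuous and lies in $H^1(\Sigma,\Phi^*g_{\mathrm{can}})$; moreover, the supports of the $f_i$ are pairwise disjoint, so they span a $d$-dimensional subspace $W\subset H^1$. Since $\Phi|_{V_i}$ is a conformal diffeomorphism and the 2-dimensional Dirichlet energy is conformally invariant,
\[
\int_\Sigma |df_i|^2_{\Phi^*g_{\mathrm{can}}}\,\dvol = \int_{\mathbb{S}^2\setminus D_r}|d\chi_r|^2\,\dvol_{g_{\mathrm{can}}} = O\!\left(\tfrac{1}{\log(R_0/r)}\right),
\]
while $\int_\Sigma f_i^2\,\dvol \ge \vol(\mathbb{S}^2\setminus D_{R_0})$, a positive constant independent of $r$.

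Finally, I would apply the min-max principle. For $u=\sum c_i f_i\in W$ the disjointness of supports gives
\[
R(u)=\frac{\sum_i c_i^2 \int|df_i|^2}{\sum_i c_i^2\int f_i^2}\le \max_i \frac{\int|df_i|^2}{\int f_i^2}= O\!\left(\tfrac{1}{\log(R_0/r)}\right),
\]
and hence $\lambda_{d-1}(\Sigma,\Phi^*g_{\mathrm{can}})\le \sup_{W}R=o(1)$ as $r\to 0$. The conical singularities do not cause trouble since they lie in $\Phi^{-1}(D_r)$, where the test functions vanish identically. The only point requiring any care is the trivialization of the covering in Step~1 and the exact logarithmic capacity bound on the cut-off; both are standard, so I do not expect a real obstacle.
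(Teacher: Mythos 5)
Your proposal is correct and follows essentially the same route as the paper: trivialize the unbranched $d$-sheeted covering over the simply connected set $\mathbb{S}^2\setminus D_r$, build a cut-off vanishing on $D_r$ with Dirichlet energy $o(1)$ via a capacity estimate, transplant it to the $d$ sheets using conformal invariance, and conclude by min-max with the $d$ disjointly supported test functions. The only cosmetic difference is that the paper phrases the capacity step through stereographic projection and the variational capacity of $B_\rho(0)$ in $B_1(0)$, whereas you use a logarithmic cut-off directly on the sphere; these are the same estimate.
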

\begin{proof}
Let $p$ be a center of $D_r$ and let $\pi$ be a stereographic projection onto $\mathbb{C}$ from $-p$. Then $\pi(D_r)$ is a Euclidean ball $B_\rho(0)$ of radius $\rho = 2\tan\frac{r}{2}$ with center at $0$. Note that $\rho = O(r)$ as $r\to 0$.
Moreover, the variational capacity of $B_\rho(0)$ in $B_1(0)$ is $2\pi|\ln \rho|^{-1}$. Therefore, there exists a function $f_r\in H^1_0(B_1(0))$ such that $f_r \equiv 1$ on $B_\rho(0)$ and the Dirichlet energy of $f_r$ is $o(1)$. Let $h_r = 1 - \pi^*f_r$. Then $h_r\equiv 0$ on $D_r$, $h_r\equiv 1$ on a hemisphere and by conformal invariance of the Dirichlet energy
$$
\int_{\mathbb{S}^2}|\nabla h_r|^2\, \dvol(g_{\mathrm{can}}) = o(r).
$$

Outside $\Phi^{-1}(D_r)$ the map $\Phi$ is a covering map. Since $\mathbb{S}^2\backslash D_r$ is a topological disk, all its covering spaces are trivial. Therefore $\Phi^{-1}(D_r)$ coincides with $d$ copies of $\mathbb{S}^2\backslash D_r$. Let $h_{1,r},\ldots,h_{d,r}$ be functions $h_r$ considered as functions on their own copy of $\mathbb{S}^2\backslash D_r$. We can extend them by zero and consider as functions on $\Sigma$. Then their support is disjoint and
$$
\frac{\int_{\Sigma}|\nabla h_{i,r}|^2\,\dvol(\Phi^*g_{\mathrm{can}})}{\int_{\Sigma}h_{i,r}^2\,\dvol(\Phi^*g_{\mathrm{can}})}= \frac{\int_{\mathbb{S}^2}|\nabla h_{r}|^2\,\dvol(g_{\mathrm{can}})}{\int_{\mathbb{S}^2}h_{r}^2\,\dvol(g_{\mathrm{can}})}\leq \frac{o(1)}{2\pi} = o(1).
$$
The standard argument with min-max characterization of the eigenvalues concludes the proof.
\end{proof}

We see that for $t$ close to $1$ all branch points of $\Pi_t$ will concentrate in a small disk around $i$. Since $\deg\Pi_t=2$ the previous proposition yields that $\lambda_1(\mathcal S, \Pi_t^*g_{\mathrm{can}}) \to 0$ as $t\to 1$. Note that this argument works with the point $i$ replaced by an arbitrary point distinct from the branch point of $\Pi$.

\begin{proposition}
\label{dichotomy} 
Let $\Psi\colon \Sigma\to \mathbb{S}^2$ be a conformal map. Suppose  $\Phi\colon \Sigma\to\mathbb{S}^n$ is such that $\Phi^*g_{\mathrm{can}} = \Psi^*g_{\mathrm{can}}$. Then the image of $\Phi$ lies in an equatorial $2$-sphere. 
\end{proposition}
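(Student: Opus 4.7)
The plan is to exploit the minimality of $\Phi$ via the Gauss equation: $\Phi$ will turn out to be totally geodesic away from the branch points, after which a connectedness argument will place its entire image on a single equatorial $2$-sphere.

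First I would show that the common pullback metric $g := \Psi^*g_{\mathrm{can}} = \Phi^*g_{\mathrm{can}}$ has constant Gauss curvature $1$ off the branch set $B_\Psi$ of $\Psi$. Indeed, on $\Sigma\setminus B_\Psi$ the conformal map $\Psi$ is a local isometry onto an open subset of $(\mathbb{S}^2, g_{\mathrm{can}})$, so its pullback metric has Gauss curvature identically equal to that of the round $2$-sphere, namely $1$.

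Next, on the open set $V := \Sigma\setminus(B_\Phi\cup B_\Psi)$, the map $\Phi$ is a minimal isometric immersion into $\mathbb{S}^n$. Choosing a local orthonormal frame $\{\nu_\alpha\}$ of the normal bundle with associated shape operators $A^\alpha$, the Gauss equation for a surface in $\mathbb{S}^n$ reads
\[
1 = K_g = 1 + \sum_\alpha \det A^\alpha.
\]
Minimality of $\Phi$ forces each $A^\alpha$ to be trace-free symmetric, hence $\det A^\alpha \leq 0$; combined with the displayed equation this yields $A^\alpha \equiv 0$ for every $\alpha$. Thus the second fundamental form of $\Phi|_V$ vanishes identically, and $\Phi|_V$ is a totally geodesic immersion into $\mathbb{S}^n$.

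Finally I would globalize: any connected totally geodesic immersed $2$-surface in $\mathbb{S}^n$ lies in a unique equatorial $2$-sphere. Concretely, fix $p_0\in V$ and let $S\subset\mathbb{S}^n$ be the equatorial $2$-sphere through $\Phi(p_0)$ with tangent plane $d\Phi_{p_0}(T_{p_0}\Sigma)$. Define $U := \{p\in V : \Phi(p)\in S\}$. Then $U$ is closed in $V$ by continuity; it is open because at any $p\in U$ the totally geodesic property implies that $\Phi$ coincides locally with the exponential map of $d\Phi_p(T_p\Sigma)$ at $\Phi(p)$, and parallel transport along $\Phi$ shows that $d\Phi_p(T_p\Sigma) = T_{\Phi(p)}S$, so the image of this exponential lies in $S$. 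Since $V$ is connected as the complement of a finite set in a connected surface, $U = V$ and $\Phi(V) \subset S$; by continuity of $\Phi$ and closedness of $S$, it follows that $\Phi(\Sigma) \subset S$. The main subtlety is precisely this final local-to-global step, which relies on the fact that the tangent plane of a totally geodesic immersion is preserved by parallel transport in the ambient sphere, so that all the locally-chosen equatorial $2$-spheres agree.
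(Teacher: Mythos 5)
Your proof is correct and follows essentially the same route as the paper's: the Gauss equation together with minimality forces the second fundamental form to vanish away from the finitely many singular points, and an open-closed/connectedness argument plus continuity then places the whole image in one equatorial $2$-sphere. The only difference is bookkeeping in the globalization step: the paper defines its set by the tangent-plane condition $D\Phi(T_q\Sigma)\subset E_{p_0}$ for a fixed $3$-dimensional subspace $E_{p_0}$, which makes openness immediate, whereas your $U$ is defined by $\Phi(p)\in S$ alone, so its openness really rests on the local constancy of the $3$-plane field $q\mapsto\operatorname{span}\bigl(\Phi(q),d\Phi_q(T_q\Sigma)\bigr)$ that you invoke via parallel transport --- once that local constancy is stated explicitly, connectedness of $V$ gives $\Phi(V)\subset S$ outright and the detour through $U$ is unnecessary.
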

\begin{proof}
Let $g = \Psi^*g_{\mathrm{can}} = \Phi^*g_{\mathrm{can}}$. Let $II$ denote the second fundamental form of $\Phi$. Then Gauss' equation implies that at any regular point
$$
1 = K + \frac{1}{2}|II|^2_g.
$$ 
At the regular point of $g$ one has $K = 1$, therefore, $II=0$, i.e. $\Phi(\Sigma)$ is totally geodesic in a neighborhood of any smooth point. Thus, that neighborhood gets mapped to a piece of an equatorial $2$-sphere. The conclusion follows from the following standard open-closed argument.

Fix a regular point $p\in\Sigma$. Since $\Phi$ is totally geodesic in a neighbourhood $U_p$ of $p$ there exists a $3$-dimensional subspace $E_p$ such that $D\Phi(TU_p)\subset E_p$. Let $\Sigma_{\mathrm{reg}}\subset \Sigma$ be the set of regular points. Define $V_p$ to be the set of $q\in\Sigma_{\mathrm{reg}}$ such that $D\Phi(T_q\Sigma)\subset E_p$. Then $V_p$ possesses the following properties.

{\bf Non-empty.} Indeed, $U_p\subset V_p$.

{\bf Open.} Indeed, let $q\in V_p$. On one hand, it means that $D\Phi(T_q\Sigma)\subset E_p$. On the other, it is always true that $D\Phi(T_q\Sigma)\subset E_q$. Since $D\Phi(T_q\Sigma)$ is $2$-dimensional, it follows that $E_p=E_q$. Therefore, $U_q\subset V_p$.

{\bf Closed.} Indeed, the complement to $V_p$ has the form $\cup V_q$ for some $q\in\Sigma_\mathrm{reg}$. Therefore, it is open.

Finally, we remark that $\Sigma_{\mathrm{reg}}$ is $\Sigma$ with finitely many points removed. Thus, it is connected. Therefore, $V_p = \Sigma_{\mathrm{reg}}$ and by continuity $\Phi(\Sigma)\subset E_p$. 
\end{proof}

\begin{proof}[Proof of Theorem \ref{unique}]
 Assume that $\Phi_1$ and $\Phi_2$ are branched minimal immersions by first eigenfunctions. Then from Proposition  \ref{n-conformal-volume}  we have: 
\begin{align*}
2\vol(\Sigma, g_1) = 2\vol(\Sigma,g_2), 
\end{align*}
where $g_1:= \Phi_1^* g_{\mathrm{can}}$ and $g_2:= \Phi_2^*g_{\mathrm{can}}$.
However, if neither of the images of $\Phi_1$ and $\Phi_2$ are equatorial $2$-spheres then by Proposition \ref{auxiliary_prop} this is only possible if the metrics $g_1$ and $g_2$ are homothetic. Since their volumes coincide, they are equal.
 
Suppose that the image of the map $\Phi_1$ lies in a $2$-sphere and the image of the map $\Phi_2$ does not. Let $g_1$ and $g_2$ be the corresponding induced metrics. First, note that $\lambda_1(g_1) = \lambda_1(g_2) = 2$. Second, by Proposition~\ref{n-conformal-volume} one has $\bar\lambda_1(\Sigma,g_1) = \bar\lambda_1(\Sigma,g_2)$. Then by Proposition~\ref{auxiliary_prop} applied to $\Phi_2$ we conclude that $g_1$ is homothetic to $g_2$. Moreover, they have the same first eigenvalue, therefore, $g_1 = g_2$. The conclusion follows from Proposition~\ref{dichotomy}.
\end{proof}
 
 The aim of the following proposition is to show that there is no conformal class which falls into category 3) of Theorem \ref{unique} when the surface is a $2$-torus.  
 
\begin{proposition}\label{Xi}
Let $\Sigma$ be a $2$-torus and $\Phi: \Sigma \to \mathbb{S}^n $ be a non-constant branched minimal immersion by first eigenfunctions. Then the image of $\Phi$ cannot be an equatorial $2$-sphere.
\end{proposition}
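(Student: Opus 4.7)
The plan is to combine Proposition~\ref{auxiliary_prop2} with the two-dimensional continuous family of translations on $\mathbb{T}^2$. Since the image of $\Phi$ lies in an equatorial $2$-sphere, I may restrict the target and assume throughout that $\Phi\colon \mathbb{T}^2\to\mathbb{S}^2$ is a non-constant branched conformal immersion by first eigenfunctions. Being non-constant and conformal on a compact surface, $\Phi$ is a branched cover of some degree $d\geq 2$ (there is no biholomorphism $\mathbb{T}^2\to\mathbb{S}^2$). Applying Proposition~\ref{auxiliary_prop2}(i) with $\Psi$ taken to be the Weierstrass $\wp$-function of the elliptic curve structure on $\mathbb{T}^2$---itself a conformal map of degree $2$---gives $d\leq 2$, hence $d=2$. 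By Riemann--Hurwitz, $\Phi$ then has exactly four simple branch points, and therefore four distinct branch values $\{q_1,q_2,q_3,q_4\}\subset \mathbb{S}^2$.

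Next, for each translation $T\colon \mathbb{T}^2\to\mathbb{T}^2$ the composition $\Phi\circ T$ is again a conformal map of degree $2$, so Proposition~\ref{auxiliary_prop2}(ii) provides a unique M\"obius transformation $\gamma_T\in\mathrm{PSL}(2,\mathbb{C})$ with $\Phi\circ T=\gamma_T\circ\Phi$. Since $T$ is a diffeomorphism, the branch values of $\Phi\circ T$ coincide with those of $\Phi$, so the identity $\Phi\circ T=\gamma_T\circ\Phi$ forces $\gamma_T$ to permute $\{q_1,q_2,q_3,q_4\}$. The stabilizer in $\mathrm{PSL}(2,\mathbb{C})$ of four distinct points is finite, so every $\gamma_T$ lies in a fixed finite subgroup $G\subset\mathrm{PSL}(2,\mathbb{C})$.

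The final step is a continuity-plus-connectedness argument. I would show that $T\mapsto\gamma_T$ is continuous by fixing three regular values $v_1,v_2,v_3$ of $\Phi$ together with local continuous sections $s_i$ of $\Phi$ near each $v_i$; then $\gamma_T(v_i)=\Phi(T(s_i(v_i)))$ depends continuously on $T$, and a M\"obius transformation is determined by its values on three points. A continuous map from the connected space $\mathbb{T}^2$ to the finite set $G$ must be constant, and since $\gamma_0=\mathrm{id}$ we conclude $\gamma_T=\mathrm{id}$ for all $T$. Hence $\Phi\circ T=\Phi$ for every translation $T$, forcing $\Phi$ to be constant---a contradiction.

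The main obstacle I anticipate is the continuity of $T\mapsto\gamma_T$: although intuitively transparent, some care is required because $\Phi$ is only a branched cover, so continuous local inverses exist only at regular values. Once continuity is in hand, the rest is a combination of M\"obius rigidity and the elementary observation that a connected space admits no nonconstant continuous map into a finite set.
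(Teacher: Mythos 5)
Your argument is correct, but it reaches the contradiction by a genuinely different route than the paper. The paper also reduces to a degree-$2$ holomorphic map (via the Li--Yau bound \eqref{Li-Yau} rather than Proposition~\ref{auxiliary_prop2}(i) applied to $\wp$, essentially the same mechanism) and also uses Proposition~\ref{auxiliary_prop2}(ii); but it then applies (ii) to the shifted Weierstrass functions $\wp(\cdot-p)$ and $\wp(\cdot-q)$ and derives a contradiction from divisor considerations and Abel's theorem ($2p-2q$ would have to be a principal divisor for arbitrary $p\neq q$). You instead apply (ii) to $\Phi$ and its translates $\Phi\circ T$, use Riemann--Hurwitz to see there are exactly four distinct branch values, observe that each $\gamma_T$ permutes them and hence lies in a fixed finite subgroup, and conclude by continuity and connectedness of the translation group that $\gamma_T\equiv\mathrm{id}$, so $\Phi$ is translation-invariant and therefore constant. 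Your route avoids elliptic function theory entirely and is in the same spirit as the symmetrization argument in Step~2 of the proof of Theorem~\ref{TSmooth}, while the paper's route is shorter once Abel's theorem is taken for granted. Two small refinements to your write-up: first, Proposition~\ref{auxiliary_prop2}(ii) a priori yields only a conformal transformation, possibly orientation-reversing; comparing degrees in $\Phi\circ T=\gamma_T\circ\Phi$ (the translation $T$ preserves orientation and $\deg\Phi\neq0$) shows $\gamma_T\in\mathrm{PSL}(2,\mathbb{C})$, and in any case the setwise stabilizer of four distinct points in the full conformal group is still finite. Second, the continuity issue you flag dissolves: no local sections are needed, since for a fixed preimage $x_i\in\Phi^{-1}(v_i)$ one has $\gamma_T(v_i)=\gamma_T(\Phi(x_i))=\Phi(T(x_i))$, which is manifestly continuous in $T$; moreover one can bypass continuity of $T\mapsto\gamma_T$ altogether by noting that $T\mapsto\bigl(\gamma_T(v_1),\gamma_T(v_2),\gamma_T(v_3)\bigr)$ is a continuous map from the connected translation group into the finite set of triples realized by the finite stabilizer, hence constant and equal to its value $(v_1,v_2,v_3)$ at $T=\mathrm{id}$, which already forces $\gamma_T=\mathrm{id}$.
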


\begin{proof}
This proposition is stated as obvious in Montiel and Ros~\cite[Corollary 8(b)]{MR1178529}. However, we were unable to come up with an obvious explanation of this fact. Instead, we provide a proof below.
Suppose that there exists a branched minimal map $\Phi\colon \mathbb{T}^2\to \mathbb{S}^2$ by first eigenfunctions of the metric $g$. After possibly taking a conjugate, we may assume that $\Phi$ is holomorphic, i.e. is given by a meromorphic function $f$. 

{\bf Claim 1.} $\deg f = 2$.

By inequality~\eqref{Li-Yau}, $\deg f\leq 2$. At the same time, any meromorphic function of degree one is invertible which is impossible for $f$ since $\mathbb{T}^2\not\approx\mathbb{S}^2$.

{\bf Claim 2.} For any two meromorphic functions $f,h$ of degree $2$ there exists a holomorphic automorphism $\gamma$ of $\mathbb{S}^2$ such that $f = \gamma \circ h$.

This immediately follows from Proposition~\ref{auxiliary_prop2}.

{\bf Claim 3.} For any point $p\in\mathbb{T}^2$ there exists a meromorphic function $f_p$ of degree $2$ such that its only pole has degree $2$ and is located at $p$. 

Let $\Lambda$ be a full rank lattice in $\mathbb{C}$ and suppose that $g$ is conformal to the flat metric on the torus $\mathbb{C}/ \Lambda$. Then we may take $f_p$ to be $\wp(x-p)$, where $\wp$ is the Weierstrass elliptic function corresponding to $\Lambda$ (for a definition of the Weierstrass elliptic function, see \cite[Section 6.2]{MR2856237}). 

Let $p\ne q$ and let $\gamma(z) = \frac{az+b}{cz+d}$ be such that $f_p = \gamma\circ f_q$. Then 
\begin{equation}
\label{fpfq}
f_p(cf_q + d) = af_q + b
\end{equation}

{\bf Claim 4.} The divisor of $h = cf_q + d$ is $2p-2q$, i.e. the only zero of $h$ is $p$, its order is $2$; and the only pole of $h$ is $q$, its order is $2$.

The function $f_p$ has a pole of order $2$ at $p$ but the left hand side of~\eqref{fpfq} is finite at $p$. Therefore, $h$ has a zero of order at least $2$ at $p$. At the same time, $\deg h\leq 2$, so $p$ is the unique zero and is of order exactly $2$. Similarly, $h^{-1}$ has a unique zero of order $2$ at $q$. 

By Abel's Theorem (see \cite[Section 5.9]{MR1909701}), there exists $h$ such that $(h) = 2p-2q$ iff $2p-2q = 0$ as points in $\mathbb{C}/\Lambda$. We arrive at a contradiction since $p$ and $q$ were chosen arbitrary.
\end{proof}

\medskip
 
\section{Application to the 2-torus and the Klein bottle}\label{Klein_bottle} 

\subsection{Conformal degeneration on the $2$-torus and maximal metrics.} \label{Conf&Max torus} It is well-known that any metric on the 2-torus is conformally equivalent to a flat one obtained from the Euclidean metric on $\mathbb{C}$ under factorization by some lattice $\Gamma \subset \mathbb{C}$ generated by 1 and $a+ib \in \mathcal{M}$, where 
\begin{align*}
\mathcal{M}:=\{a+ib\in \mathbb{C} \vert 0 \leq a \leq 1/2, a^2+b^2 \geq 1, b>0\}.
\end{align*}
Thus, conformal classes are encoded by points of $\mathcal{M}$ (\textit{the moduli space of flat tori}).

We point out the following action of a subgroup of the group of conformal diffeomorphisms isomorphic to $\mathbb{S}^1$. Let $\mathbb{C}/\Gamma$ where $\Gamma$ is generated by 1 and $a+ib \in \mathcal{M}$.  For $\theta \in \mathbb{R}$ we have an action on $\mathbb{C}$ via translation: $s_{\theta}(x+iy) = x+\theta +i y$. This $\mathbb{R}$-action on $\mathbb{C}$ induces an $\mathbb{S}^1$-action on $\mathbb{C}/\Gamma$ that has no fixed points. A metric in the conformal class corresponding to $a+ib \in \mathcal{M}$ is given by $f(x + i y) (dx^2+dy^2)$ where $f(z)$ is a smooth positive function that is invariant under the action of $\Gamma$. Since $s_{\theta}$ is a translation we have: $s_\theta^*(f(x + i y) (dx^2+dy^2)) = f(x+\theta + i y)(dx^2+dy^2)$. Thus, $s_\theta$ acts by conformal diffeomorphisms.     
We recall the following result concerning maximization of $\lambda_1$ and conformal degeneration.

\begin{theorem}[\cite{MR2514484}] \label{Girouard torus}Let $(g_n)$ be a sequence of metrics of area one on the 2-torus such that the corresponding sequence $(a_n+ib_n) \in \mathcal{M}$ satisfies $\lim_{n \to \infty}b_n=\infty$, then

\begin{align*}
\lim_{n \to \infty}\lambda_1(g_n)\leq 8\pi.
\end{align*}

\end{theorem}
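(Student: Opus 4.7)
My plan is to exploit the conformal degeneration ($b_n \to \infty$) to construct test functions that realize Hersch's sharp bound $8\pi$ for the round sphere in the limit. The key observation is that, conformally, the torus $\mathbb{T}_n = \mathbb{C}/\Gamma_n$ tends to a twice-punctured sphere as $b_n \to \infty$, and the stereographic projection becomes an approximate degree-one conformal map onto $\mathbb{S}^2$.

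First, I would invoke the conformal invariance of the Dirichlet energy in dimension two to write $g_n = \rho_n g_{\mathrm{flat},n}$ with $\int \rho_n \, \dvol(g_{\mathrm{flat},n}) = 1$, so that the Rayleigh quotient involves only the flat Laplacian in the numerator and the weighted measure $\rho_n\,\dvol(g_{\mathrm{flat},n})$ in the denominator. Then I realize $\mathbb{T}_n$ as the quotient $\mathbb{C}^*/\langle q_n \rangle$ with $|q_n| = e^{-2\pi b_n} \to 0$ and define an almost-conformal map $\Phi_n \colon \mathbb{T}_n \to \mathbb{S}^2$ by composing the stereographic projection $\sigma \colon \mathbb{C}^* \to \mathbb{S}^2 \setminus \{N,S\}$ with a smooth cut-off in thin collars near the identified boundaries of the fundamental annulus $A_n = \{|q_n|^{1/2} \leq |w| \leq |q_n|^{-1/2}\}$, so that $\Phi_n$ takes a single value $p \in \mathbb{S}^2$ on those collars and descends to a smooth degree-one map on the torus. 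This $\Phi_n$ is conformal off the collars, and its Dirichlet energy tends to $2\vol(\mathbb{S}^2, g_{\mathrm{can}}) = 8\pi$ as $b_n \to \infty$, since the stereographic image fills $\mathbb{S}^2$ up to a vanishing neighborhood of $p$ and the collar contributions go to $0$.

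Next I apply Hersch's centering argument: there exists $\gamma_n \in \mathrm{Conf}(\mathbb{S}^2)$ such that each coordinate component of $\gamma_n \circ \Phi_n$ is orthogonal to constants against $\rho_n \, \dvol(g_{\mathrm{flat},n})$. Using them as test functions and summing Rayleigh quotients gives
\[
\lambda_1(g_n) \leq \sum_{i=1}^{3} \int_{\mathbb{T}_n} |\nabla (\gamma_n \circ \Phi_n)^i|^2 \, \dvol(g_{\mathrm{flat},n}),
\]
since $\sum_i (\gamma_n\circ\Phi_n)^i{}^2 \equiv 1$ and $\int \rho_n\,\dvol(g_{\mathrm{flat},n}) = 1$. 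On the region where $\Phi_n$ is conformal, the composition $\gamma_n \circ \Phi_n$ is also conformal, so its energy equals twice the area of its image, which is at most $8\pi$. In the limit $b_n \to \infty$, the image fills $\mathbb{S}^2$ up to a vanishing cap, which gives the desired bound.

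The main obstacle is controlling the collar contribution to the energy of $\gamma_n \circ \Phi_n$: while the Dirichlet energy of $\Phi_n$ on the collar vanishes, the Hersch transformation $\gamma_n$ may have an unbounded conformal dilation near $p$, amplifying this contribution. I would handle this by a dichotomy argument. If $(\gamma_n)$ remains in a compact subset of $\mathrm{Conf}(\mathbb{S}^2)$, the dilation is bounded and the collar contribution still vanishes. If instead $\gamma_n$ escapes every compact subset of $\mathrm{Conf}(\mathbb{S}^2)$, then the pushforward measure $(\Phi_n)_*(\rho_n\,\dvol(g_{\mathrm{flat},n}))$ concentrates at a single point $q \in \mathbb{S}^2$, forcing $\rho_n$ to concentrate in a small subregion of $\mathbb{T}_n$; in this case I would either re-choose the cut-off point $p$ far from $q$ to restore the bounded-dilation regime, or apply a capacity argument on the long flat cylinder in the complement of the concentration region to conclude directly that $\lambda_1(g_n) = o(1) \leq 8\pi$.
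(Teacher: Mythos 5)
The paper does not actually prove this statement---it is quoted from Girouard's work---so I can only judge your argument on its own terms; its skeleton (an almost-conformal degree-one map $\Phi_n\colon\mathbb{T}_n\to\mathbb{S}^2$ built from the long flat cylinder, Hersch centering, conformal invariance of energy, and a separate treatment when the centering maps degenerate) is indeed the standard route, and the bounded-$\gamma_n$ case is handled correctly. The genuine gap is in your degenerate case. First, the fallback ``conclude directly that $\lambda_1(g_n)=o(1)$'' is false: concentration of the measure at a point is perfectly compatible with $\bar\lambda_1$ close to $8\pi$. For instance, let $\rho_n$ be (a cutoff of) the pull-back of the round metric under the conformal map of the upper half of the cylinder onto the sphere minus small caps; the resulting unit-area metrics are round spheres with a long thin tentacle, so $\lambda_1(g_n)\to 8\pi$, while $(\Phi_n)_*(\rho_n\,\mathrm{dV})\to\delta_N$ and the Hersch maps blow up. A capacity test function adapted to the concentration region has both small energy \emph{and} small variance with respect to $\rho_n\,\mathrm{dV}$, so its Rayleigh quotient gives no information. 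Second, ``re-choosing the cut-off point $p$ far from $q$'' cannot restore bounded dilation, because the dangerous concentration points are the poles $N,S$---the forced images of the two ends of the conformal part of the cylinder---not $p$; already the flat unit-area metric pushes forward to $\tfrac12(\delta_N+\delta_S)$. If the mass concentrates near $N$, the centering map $\gamma_n$ expands a tiny cap at $N$, the interface circle between your conformal region and the collar can become macroscopic after composing with $\gamma_n$, and the collar must then contract a loop of definite length inside a thin cylinder, which costs a non-vanishing amount of Dirichlet energy; your bound then degrades to $8\pi+c$ with $c$ not tending to $0$.

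What the argument needs at this point (and what the cited proof supplies) is a genuine \emph{concentration lemma}: if the measures $\rho_n\,\mathrm{dV}$ (equivalently their push-forwards) concentrate at a point, then $\limsup_n\lambda_1(g_n)\le 8\pi$ directly, proved by transplanting Hersch's argument into a conformal chart around the concentration point; with such a lemma the dichotomy closes, since non-concentration is what forces the $\gamma_n$ to stay in a compact set. Two smaller points you should also address: Hersch's centering lemma requires the push-forward measure to have no atom of mass $\ge\tfrac12$, but your collars are mapped to the single point $p$ and may carry almost all of the mass of $g_n$; this is fixable by choosing the cut circle (and collar height) by a mean-value/pigeonhole argument so that the collars carry $o(1)$ mass. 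Finally, the conclusion should be stated with $\limsup$ (the limit of $\lambda_1(g_n)$ need not exist), which is also how the inequality is used later in the paper.
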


\subsection{Conformal degeneration on the Klein bottle and maximal metrics.}\label{Conf&Max}
We define the Klein bottle as the quotient of $\mathbb{C}$ under the action of the group $G_b$, generated by the following elements:
\begin{align*}
t_{b}(x+iy)=x+i(y+b); \hspace{15 pt} \tau (x+iy)=x+\pi-iy.
\end{align*}
As a consequence of the Uniformization Theorem, any metric on the Klein bottle is conformal to a flat metric on $K_b:= \mathbb{C}/G_b$ for some $b>0$. Thus, the moduli space of conformal classes of metrics on the Klein bottle is encoded by the positive real numbers. Similar to the case of the $2$-torus there is a group of conformal diffeomorphisms isomorphic to $\mathbb{S}^1$. Indeed, translations of the form $x+iy \mapsto x+\theta +iy$ induce an action of $\mathbb{R}/\pi \mathbb{Z}$ on $K_b$ without fixed points. Just as above this action induces an action by conformal diffeomorphisms. 
We recall the following result:

\begin{theorem}[\cite{MR2514484}]\label{Girouard} Let $(g_{n}) \subset \mathcal{R}(\mathbb{KL})$ be a sequence of metrics of area one on the Klein bottle.
\begin{enumerate}[(i)]
\item If $\lim_{n \to \infty} b_{n} =0$, then $\lim \sup_{n \to \infty} \lambda_{1}(g_{n}) \leq 8\pi$.
\item If $\lim_{n \to \infty} b_{n} =\infty, then \lim \sup_{n \to \infty} \lambda_{1}(g_{n}) \leq 12\pi$.
\end{enumerate}
\end{theorem}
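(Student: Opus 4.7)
My plan is to apply Proposition~\ref{n-conformal-volume}, which gives $\bar\lambda_1(\Sigma,g)\leq 2\vol_c(N,\Sigma)$. Since $\vol_c(N,K_{b_n})\leq \vol_c(N,\Phi_n)$ for any branched conformal immersion $\Phi_n\colon K_{b_n}\to\mathbb{S}^N$, and since $\vol(K_{b_n},g_n)=1$ implies $\lambda_1(g_n)=\bar\lambda_1(g_n)$, the task reduces to constructing, for each $n$, an explicit branched conformal immersion $\Phi_n$ whose conformal $N$-volume converges to $4\pi$ in Part~(i) and to $6\pi$ in Part~(ii); Proposition~\ref{n-conformal-volume} then immediately yields the bounds $8\pi$ and $12\pi$.

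For Part~(i), $b_n\to 0$, I would work on the orientation double cover $T_{b_n}=\mathbb{C}/\langle 2\pi,ib_n\rangle$, which becomes an increasingly long, thin rectangular torus. A natural candidate for $\Phi_n$ is built from a degree-two branched conformal map $T_{b_n}\to\mathbb{S}^N$ (for small $N$, e.g.\ via a variant of the Weierstrass $\wp$ function), chosen to be equivariant with respect to the Klein bottle deck transformation $\tau$ and an appropriate conformal involution of $\mathbb{S}^N$ so that the map descends to $K_{b_n}$. As $b_n\to 0$, one verifies that the four branch points of the underlying hyperelliptic map pair up and the image covers $\mathbb{S}^N$ essentially only once in the limit, so that $\vol(K_{b_n},\Phi_n^*g_{\mathrm{can}})\to 4\pi$.

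For Part~(ii), $b_n\to\infty$, the cover $T_{b_n}$ becomes long in the imaginary direction. Here I would take $\Phi_n$ to be a Veronese-type branched conformal immersion $K_{b_n}\to\mathbb{S}^4$, constructed from second-order spherical harmonics on $T_{b_n}$ in a way that descends equivariantly to $K_{b_n}$ and whose limiting image approximates the standard Veronese embedding $\mathbb{RP}^2\subset\mathbb{S}^4$ of area $6\pi$. An explicit parametrization (analogous to those used in the Lawson/Nadirashvili constructions) then yields $\vol(K_{b_n},\Phi_n^*g_{\mathrm{can}})\to 6\pi$.

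The main obstacle in both cases is estimating the supremum over M\"{o}bius transformations in the definition $\vol_c(N,\Phi_n)=\sup_{\gamma\in G_N}\vol(K_{b_n},(\gamma\circ\Phi_n)^*g_{\mathrm{can}})$: a $\gamma$ that concentrates mass onto a small region could in principle inflate the supremum. This is overcome by arguing that $\Phi_n$ converges, in a suitable sense, to a minimal branched conformal immersion of the limiting (degenerate) conformal class. For such minimal immersions, Proposition~\ref{variation_formula} gives the crucial equality $\vol_c(N,\Phi)=\vol(\Sigma,\Phi^*g_{\mathrm{can}})$, and a perturbation argument transfers this equality, up to an $o(1)$ error, to the pre-limit immersions $\Phi_n$. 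Combined with Proposition~\ref{n-conformal-volume}, this yields $\limsup\lambda_1(g_n)\leq 8\pi$ in Part~(i) and $\leq 12\pi$ in Part~(ii).
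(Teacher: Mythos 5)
This theorem is not proved in the paper at all: it is quoted from Girouard \cite{MR2514484}, so the only comparison I can make is between your plan and the argument it would actually take. Your reduction replaces the statement to be proved by a strictly stronger one: to use Proposition \ref{n-conformal-volume} you must bound $\vol_c(N,K_{b_n})$, i.e.\ the \emph{supremum} over all M\"obius transformations $\gamma$ of the area of $(\gamma\circ\Phi_n)^*g_{\mathrm{can}}$, by $4\pi+o(1)$ (resp.\ $6\pi+o(1)$), and your constructions cannot deliver this. In Part~(i) the proposed map does not exist: a nonconstant branched conformal map of the Klein bottle with image in a $2$-sphere is impossible (this is exactly Remark \ref{Klein} of the paper; equivalently, its lift to the torus cover would be a holomorphic map invariant under the antiholomorphic deck involution, hence constant), and the equivariance $f\circ\tau=\iota\circ f$ you invoke makes $f$ descend only to a map into the quotient $\mathbb{S}^N/\iota$ (e.g.\ $\mathbb{RP}^2$), not into $\mathbb{S}^N$. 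Moreover, for a degree-two branched cover of $\mathbb{S}^2$ the pullback area equals $8\pi$ for \emph{every} $\gamma$, so the heuristic that ``the image covers $\mathbb{S}^N$ essentially only once in the limit'' cannot occur; and routing through $\mathbb{RP}^2$ and the Veronese surface gives multiplicity at least $2$, hence conformal volume at least about $12\pi$, i.e.\ the bound $24\pi$, not $8\pi$.

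The same difficulty is fatal in Part~(ii): to reach $6\pi$ you need a map whose multiplicity is $\approx 1$ over most of its image, but then the supremum in $\vol_c(N,\Phi_n)$ is not attained near the identity --- a M\"obius transformation concentrating the image area on the region of multiplicity $2$ pushes the area up towards $12\pi$, which is precisely the obstacle you flag and do not resolve. Your proposed fix, convergence of $\Phi_n$ to a ``minimal branched conformal immersion of the limiting conformal class'' plus Proposition \ref{variation_formula}, has no content here: the limiting conformal class is degenerate (not a closed Klein bottle), Proposition \ref{variation_formula} applies only to genuine branched minimal immersions of closed surfaces, and in Part~(i) there is no candidate limit at all, since a minimal immersion with conformal volume $4\pi$ would have to be a round $2$-sphere, which the Klein bottle cannot conformally cover. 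The actual proof (Girouard) avoids the conformal volume entirely: it is a Hersch-type test-function argument, where one chooses a \emph{single} renormalizing conformal transformation making the coordinate functions orthogonal to constants, and bounds the Dirichlet energy by the area of the image counted with multiplicity $\approx 1$ --- via maps of the long thin part onto $\mathbb{S}^2$ for the $8\pi$ bound, and through $\mathbb{RP}^2$ and the Veronese embedding for the $12\pi$ bound --- so that no supremum over the M\"obius group ever needs to be controlled. As it stands, your argument has a genuine gap at its central step.
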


Roughly speaking, Theorems \ref{Girouard torus} and \ref{Girouard} prove that the maximal metrics for the functional $\bar\lambda_1$ on the 2-torus and the Klein bottle must be in a conformal class which corresponds to a fundamental domain which cannot be too ``long and skinny."

\subsection{Continuity results}\label{continuity}

One of the classical distances considered on the moduli space of complex structures is the \emph{Teichm\"uller distance}. Naturally, this distance induces a distance $d_T$ on the space of conformal classes. In this section we show that the 
conformal eigenvalues 
$$
\Lambda_{k}(\Sigma,[g]):=\sup_{g'\in [g]}\lambda_{k}(g')\myVol(\Sigma, g')
$$
are continuous on the space of conformal classes.
This fact should be well-known but we were not able to find a reference.

Here we follow \cite{MR2850125}. First, we define the Teichm\"uller distance for orientable surfaces.
We define a notion of \emph{dilatation}. Let $f\colon\Sigma_1\to\Sigma_2$ be an orientation-preserving homeomorphism between two Riemann surfaces which is a diffeomorphism outside a finite set of points. The function $k_f(p)$ of $f$ at $p$ is defined in local coordinates as $k_f(p) = \frac{|f_z| + |f_{\bar z}|}{|f_z| - |f_{\bar z}|}$. It is defined only at points where $f$ is smooth and does not depend on the choice of coordinates. One defines the dilatation of $f$ by the formula $K_f = ||k_f||_\infty$. If $K_f<\infty$ then one says that $f$ is $K_f$-quasiconformal.
%

For any holomorphic quadratic differential $q_1$ on a Riemann surface $\Sigma_1$ its absolute value $|q_1|$ defines a flat metric with conical singularities at zeroes of $q_1$ compatible with the complex structure. For any non-singular point $p_0\in\Sigma_1$ one can define \emph{natural coordinates} $\eta = \int_{p_0}^p\sqrt{q_1}$ such that $q_1 = d\eta^2$ and $|q_1| = |d\eta|^2$, i.e. the metric is Euclidean in these coordinates. Natural coordinates are defined up to a sign and a translation. 

A homeomorphism $f\colon\Sigma_1\to\Sigma_2$ between Riemann surfaces is called a \emph{Teichm\"uller mapping} if there exist holomorphic differentials $q_1$ on $\Sigma_1$ and $q_2$ on $\Sigma_2$ and a real number $K>1$ such that
\begin{itemize}
\item[(i)] $f$ maps zeroes of $q_1$ to zeroes of $q_2$;
\item[(ii)]  If $p$ is not a zero of $q_1$ then with respect to a set of natural coordinates for $q_1$ and $q_2$ centered at $p$ and $f(p)$ respectively, the mapping $f$ can be written as
$$
f(z) = \frac{1}{2}\left(\frac{K+1}{\sqrt{K}}z + \frac{K-1}{\sqrt{K}}\bar z\right),
$$
or, equivalently,
$$
f(x+iy) = \sqrt{K}x + i\frac{1}{\sqrt{K}}y
$$
\end{itemize}
In particular, a Teichm\"uller map has dilatation $K$ and is smooth outside of zeroes of $q_1$. Moreover, in natural coordinates $|q_1| = dx^2 + dy^2$ and $f^*|q_2| = Kdx^2 + \frac{1}{K}dy^2$. 

\begin{theorem} [Teichm\"uller's Theorem]
\label{teichmuller}
Given an orientation preserving homeomorphism $f\colon\Sigma_1\to\Sigma_2$ between non-isomorphic Riemann surfaces there exists a Teichm\"uller mapping homotopic to $f$. It is unique provided $\chi(\Sigma)<0$. If $\chi(\Sigma) = 0$ then a Teichm\"uller mapping is affine and is unique up to a translation, therefore the dilatation is independent of the choice of the mapping.
\end{theorem}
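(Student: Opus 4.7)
The plan is to separate the argument into the genus-one case, where the statement is essentially elementary, and the case $\chi(\Sigma)<0$, which requires the full length-area machinery; and within the latter case, to handle existence and uniqueness by distinct arguments.

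For the torus case ($\chi(\Sigma)=0$), every Riemann surface structure on $\mathbb{T}^2$ comes from a lattice $\Lambda\subset\mathbb{C}$, and any homeomorphism $f\colon\Sigma_1\to\Sigma_2$ lifts to a map $\widetilde{f}\colon\mathbb{C}\to\mathbb{C}$ intertwining the two deck group actions. First I would show that $\widetilde{f}$ is homotopic to the unique $\mathbb{R}$-linear map $L$ prescribed by how $f$ matches lattice generators. Writing $L$ in Jordan-like normal form with respect to a basis compatible with the target lattice and then diagonalizing by a rotation gives an affine map that is already of Teichm\"uller type in flat coordinates, since $|q_i|=|dz|^2$ is available from any constant quadratic differential. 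The Gr\"otzsch extremal problem for rectangles then shows directly that no $K$-quasiconformal map in the homotopy class has smaller dilatation than $L$, so $L$ is itself the extremal. Uniqueness up to translation is immediate from the fact that an affine map is determined by its linear part.

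For surfaces with $\chi(\Sigma)<0$, I would attack existence by the variational route. The space of $K$-quasiconformal homeomorphisms with $K\leq K_0$ in a fixed homotopy class is sequentially compact in the uniform topology by the classical normal-family theorem for quasiconformal mappings, so the infimum $K^*=\inf_g K_g$ taken over the homotopy class of $f$ is attained by some $f_0$. The genuinely hard part, and the step I expect to be the main obstacle, is to promote this extremal $f_0$ to a Teichm\"uller mapping, i.e. to produce holomorphic quadratic differentials $q_1$ on $\Sigma_1$ and $q_2$ on $\Sigma_2$ in whose natural coordinates $f_0$ takes the linear form of the statement. The key input is Teichm\"uller's fundamental inequality: for any holomorphic quadratic differential $q$ on $\Sigma_2$ and any quasiconformal $g\colon\Sigma_1\to\Sigma_2$ in the homotopy class, $\|g^{*}q\|_{L^{1}(\Sigma_{1})}\leq K_{g}\,\|q\|_{L^{1}(\Sigma_{2})}$, proved by integrating $|g^*q|\leq K_g|q|$ along horizontal trajectories of $q$ and using that trajectories fill the surface up to a null set. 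Applying this to $g=f_0$ and optimizing the choice of $q$ over the finite-dimensional space of holomorphic quadratic differentials extracts the required differentials, by a duality argument essentially identical to the minimax proof of the Hahn--Banach theorem in this setting.

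For uniqueness under $\chi(\Sigma)<0$, I would run the length-area inequality in the reverse direction. If $f_0$ and $f_1$ were two Teichm\"uller maps in the same homotopy class, both attaining the extremal dilatation, then $h=f_1\circ f_0^{-1}$ would be a $1$-quasiconformal, hence conformal, mapping of $\Sigma_2$ homotopic to the identity. Because a hyperbolic surface has only a finite group of conformal automorphisms and no nontrivial conformal automorphism is homotopic to the identity, this forces $h=\id$, hence $f_0=f_1$. The compactness theorem for quasiconformal maps, the length-area inequality with its equality-rigidity, and the finiteness of $\operatorname{Aut}(\Sigma)$ for $\chi<0$ are the three nontrivial ingredients; the rest of the argument is soft and complex-analytic.
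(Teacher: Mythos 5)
This is a statement the paper does not prove at all: Teichm\"uller's Theorem is quoted as a classical result (following the cited reference on Teichm\"uller theory) and is only used as a black box to define the Teichm\"uller distance. So the relevant question is whether your sketch would stand on its own, and it has two genuine gaps, both at the points where the real difficulty of the theorem lives. The most serious one is the uniqueness argument for $\chi(\Sigma)<0$: from two extremal maps $f_0,f_1$ in the same homotopy class you cannot conclude that $h=f_1\circ f_0^{-1}$ is $1$-quasiconformal. Dilatations are submultiplicative, not subtractive; the composition of two $K$-quasiconformal maps is a priori only $K^2$-quasiconformal, and nothing in ``both attain the minimal dilatation'' forces $h$ to be conformal. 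Showing that two Teichm\"uller maps in the same class coincide is precisely the hard content of the uniqueness half of the theorem, and the standard proof runs the Gr\"otzsch--Teichm\"uller length--area argument along the horizontal trajectories of the terminal quadratic differential, comparing $f_1\circ f_0^{-1}$ with the identity and using the minimal-length property of horizontal arcs; your final appeal to the finiteness of the automorphism group of a hyperbolic surface only enters after that step, which is missing.

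The existence sketch has a parallel problem. The inequality you call Teichm\"uller's fundamental inequality, $\|g^*q\|_{L^1(\Sigma_1)}\leq K_g\|q\|_{L^1(\Sigma_2)}$, is an elementary pointwise estimate combined with the change-of-variables formula; it holds for every quasiconformal $g$ and makes no reference to the homotopy class, so it cannot detect extremality within a class. The inequality that actually drives the theory is the Reich--Strebel (Gr\"otzsch--Teichm\"uller) main inequality, whose proof genuinely uses the trajectory structure, and passing from ``$f_0$ attains the infimum of dilatation'' to ``$f_0$ is of Teichm\"uller form'' is the Hamilton--Krushkal/Reich--Strebel theorem, not a soft Hahn--Banach-type duality; alternatively, the classical route avoids your variational step entirely and proves existence by the continuity method (uniqueness first, then invariance of domain applied to the map from the open unit ball of holomorphic quadratic differentials to Teichm\"uller space). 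The normal-family compactness giving an extremal $f_0$, and the torus case via lattices and the Gr\"otzsch rectangle problem, are fine, but as written the two central steps of the theorem are asserted rather than proved.
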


\begin{definition}
Let $\Sigma$ be an orientable surface. Consider two different complex structures on $\Sigma$ making it into Riemann surfaces $\Sigma_1$ and $\Sigma_2$. Then one defines the \emph{Teichm\"uller distance} between $\Sigma_1$ and $\Sigma_2$ as follows,
$$
d_T(\Sigma_1,\Sigma_2) = \frac{1}{2}\inf\limits_f\log(K_f),
$$
where $f$ ranges over all Teichm\"uller mappings $f\colon \Sigma_1\to\Sigma_2$.
%
%
%
\end{definition}
\begin{remark}
The fact that $d_T$ is indeed a distance function is not obvious and relies on proper discontinuity of the action of the mapping class group on the Teichm\"uller space.
\end{remark}

Teichm\"uller distance $d_T$ on the moduli space of complex structures induces a distance function on the moduli space of conformal classes. Indeed, let $[g_1]$ and $[g_2]$ be two conformal classes. Choose complex structures $\Sigma_i$ compatible with $[g_i]$ inducing the same orientation on $\Sigma$. Then one sets $d_T([g_1],[g_2]) = d_T(\Sigma_1,\Sigma_2)$. 

Up until now we considered orientable surfaces $\Sigma$. Let us now address the case of non-orientable $\Sigma$. Denote by $\pi\colon\hat\Sigma\to\Sigma$ an orientable double cover and by $\sigma$ a corresponding involution exchanging the leaves of $\pi$. Let $[g_1]$ and $[g_2]$ be two conformal classes on $\Sigma$. Choose two complex structures $\hat\Sigma_1$ and $\hat\Sigma_2$ on $\hat\Sigma$ compatible with $[\pi^*g_1]$ and $[\pi^*g_2]$. Then one defines 
$$
d_T([g_1],[g_2]) = \frac{1}{2}\inf\limits_f\log(K_f),
$$ 
where $f$ ranges over all Teichm\"uller mappings $f\colon\hat\Sigma_1\to\hat\Sigma_2$ commuting with $\sigma$.
\begin{remark}
This definition is implicitly making use of the equivariant version of Teichm\"uller's Theorem. If in Theorem~\ref{teichmuller} $f\circ\sigma$ is homotopic to $\sigma\circ f$ then the Teichm\"uller mapping can be chosen $\sigma$-equivariant. Indeed, suppose that $h$ is the Teichm\"uller mapping for $f$. Then $h \circ \sigma$ is the Teichm\"uller map for $f \circ \sigma$. Similarly, $\sigma \circ h$ is the Teichm\"uller map for $\sigma \circ f$. If $\sigma \circ f$ is homotopic to $f \circ \sigma$, then $h \circ \sigma$ must be homotopic to $\sigma \circ h$, and by the uniqueness part of Teichm\"uller's Theorem we obtain $\sigma \circ h = h \circ \sigma$. 
\end{remark}



\begin{proposition}\label{cont}
The conformal eigenvalues $\Lambda_k(\Sigma,[g])$ are continuous in the distance $d_T$.
\end{proposition}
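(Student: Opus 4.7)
The plan is to use Teichm\"uller's theorem to move metrics between nearby conformal classes with controlled distortion of the Rayleigh quotient, so that the supremum defining $\Lambda_k$ changes by a factor close to $1$. Concretely, given conformal classes $[g_1]$ and $[g_2]$ with $d_T([g_1],[g_2]) = \varepsilon$, I would apply Theorem~\ref{teichmuller} (or its $\sigma$-equivariant version when $\Sigma$ is non-orientable) to obtain a Teichm\"uller mapping $f\colon\Sigma_1\to\Sigma_2$ of dilatation $K = e^{2\varepsilon}$, together with the associated holomorphic quadratic differentials $q_1,q_2$. The flat metrics $|q_1|, |q_2|$ (with conical singularities at zeros of the $q_i$) serve as reference elements in $[g_1]$ and $[g_2]$, so any metric in $[g_2]$ can be written $\rho|q_2|$. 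I would then define a bijection $\Phi_f\colon[g_2]\to[g_1]$ by $\Phi_f(\rho|q_2|) := (\rho\circ f)|q_1|$; since $f$ has unit Jacobian in natural coordinates, $\Phi_f$ preserves area.

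In natural coordinates, $f$ has the explicit form $x_1+ix_2\mapsto\sqrt{K}\,x_1 + ix_2/\sqrt{K}$. For a test function $\psi$ on $\Sigma_2$ with pullback $\phi := \psi\circ f$, the chain rule gives $\phi_{x_1}^2+\phi_{x_2}^2 = K\psi_{\xi_1}^2+K^{-1}\psi_{\xi_2}^2$, so a change of variable together with the $2$-dimensional conformal invariance of the Dirichlet energy yields
\[
K^{-1}\int_{\Sigma_2}|\nabla\psi|^2_{h_2}\,\dvol(h_2)\leq \int_{\Sigma_1}|\nabla\phi|^2_{\Phi_f(h_2)}\,\dvol(\Phi_f(h_2)) \leq K\int_{\Sigma_2}|\nabla\psi|^2_{h_2}\,\dvol(h_2),
\]
while the same change of variable gives $\|\phi\|_{L^2(\Phi_f(h_2))} = \|\psi\|_{L^2(h_2)}$. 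Feeding these into the min-max characterization of $\lambda_k$ produces $K^{-1}\bar\lambda_k(h_2) \leq \bar\lambda_k(\Phi_f(h_2)) \leq K\bar\lambda_k(h_2)$ for every $h_2\in[g_2]$. Since $\Phi_f$ is a bijection, taking the supremum over $[g_2]$ gives $K^{-1}\Lambda_k(\Sigma,[g_2])\leq \Lambda_k(\Sigma,[g_1])\leq K\Lambda_k(\Sigma,[g_2])$, and letting $\varepsilon\to 0$ forces $K\to 1$, yielding continuity.

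The main technical subtlety will be that the natural coordinates only cover $\Sigma$ away from the finitely many zeros of the $q_i$, and $f$ is merely quasiconformal near those points. To handle this I would note that the $H^1$ theory of Section~\ref{branched_immersions} is robust enough for the displayed identity to hold as an improper integral, since the singular set has zero capacity and smooth functions on $\Sigma$ form a dense subspace of $H^1$ for metrics with conical singularities. I would also remark that the value of the supremum defining $\Lambda_k(\Sigma,[g])$ is the same whether one allows conical metrics in the conformal class or restricts to smooth ones, by an approximation argument. In the non-orientable case I would use the $\sigma$-equivariant Teichm\"uller theorem noted in the preceding remark to obtain an $f$ commuting with $\sigma$, so that $\Phi_f$ descends to a map between conformal classes on $\Sigma$ and the min-max characterization applies to $\sigma$-invariant test functions.
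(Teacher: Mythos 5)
Your overall route is the one the paper itself follows: Teichm\"uller's theorem, the affine form of $f$ in natural coordinates, unit Jacobian plus conformal invariance of the Dirichlet energy to get a two-sided $K$-control of Rayleigh quotients, then take suprema and let $K\to 1$, with the $\sigma$-equivariant version handling the non-orientable case. The genuine gap is in the step ``since $\Phi_f$ is a bijection, taking the supremum over $[g_2]$ gives $K^{-1}\Lambda_k(\Sigma,[g_2])\leq\Lambda_k(\Sigma,[g_1])$''. The transported object $\Phi_f(h_2)=(\rho\circ f)\,|q_1|$ is not an admissible competitor in the supremum defining $\Lambda_k(\Sigma,[g_1])$: because $f$ fails to be smooth at the zeroes of $q_1$, where it has the form~\eqref{teichsing}, the density of $\Phi_f(h_2)$ in a local coordinate $z_1$ is merely bounded and measurable (in general not even continuous) near those points. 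It is therefore neither a smooth metric nor a metric with conical singularities in the sense of Section~\ref{branched_immersions}, so the inequality $\bar\lambda_k(\Phi_f(h_2))\leq\Lambda_k(\Sigma,[g_1])$ does not follow from the definition of $\Lambda_k$, and your remark that admitting conical metrics does not change the supremum does not cover this case. Your ``zero capacity / density of smooth functions'' point only addresses the test-function side (the validity of the energy comparison), not the admissibility of the competitor.

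What is missing is exactly where the paper spends most of its effort: (a) the local computation near a zero of $q_1$, using~\eqref{teichsing}, showing that the transported density is in $L^\infty$; and (b) the approximation statement, Lemma~\ref{lem:convergence}, that for equibounded densities converging a.e.\ the eigenvalues $\lambda_k([g_1],\rho_\varepsilon\,\mathrm{dV}(g_1))$ converge, so that mollifying the $L^\infty$ density produces genuine metrics in $[g_1]$ whose normalized eigenvalues approach those of the transported measure; only then is taking the supremum legitimate. The nontrivial direction of that lemma (the lower bound on the limit, proved via uniform $L^\infty$ bounds on eigenfunctions and a min-max argument) is not an off-the-shelf fact and cannot be dismissed as ``a standard approximation argument''. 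Once (a) and (b) are supplied, your argument coincides with the paper's proof, which packages the transported metric as the measure $\mu=(f^{-1})_*\,\mathrm{dV}(h_2)$ in Kokarev's conformal-class-plus-measure framework and then approximates it by smooth metrics.
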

\begin{proof}
We follow the notation of~\cite{kokarev}. Namely, given a conformal class $c$ of metrics and a measure $\mu$ on $\Sigma$ we define the Rayleigh quotient
$$
R_c(u,\mu) = \frac{\int_\Sigma|\nabla u|^2_g\,\mathrm{dV}(g)}{\int_\Sigma u^2\,d\mu}
$$
and the eigenvalues $\lambda_k(c,\mu)$ as critical values of the Rayleigh quotient. For a comprehensive study of eigenvalues in this context, including a proof of the existence of eigenfunctions, see~\cite{kokarev}.

We start with the case of an orientable $\Sigma$. Let $[g_1]$ and $[g_2]$ be two conformal classes and let $\Sigma_1$ and $\Sigma_2$ be the corresponding Riemann surfaces. Denote by $f\colon\Sigma_1\to\Sigma_2$ any Teichm\"uller map, let $q_1$ and $q_2$ be the corresponding quadratic differentials and suppose that $S_1$ and $S_2$ are their zeroes respectively. By property (ii) at any point of $\Sigma_1\backslash S_1$ one has 
\begin{equation}
\label{forms}
\frac{1}{K}f^*|q_2|\leq |q_1|\leq Kf^*|q_2|.
\end{equation}

At this point we use the conformal invariance of the Dirichlet energy. 
Let $K_i\subset K_{i+1}$ be a compact exhaustion of $\Sigma_2\backslash S_2$. Similarly, $f^{-1}(K_i)$ form a compact exhaustion of $\Sigma_1\backslash S_1$. Then for any $u\in H^1(\Sigma_2)$ one has
$$
\int_{\Sigma_1\backslash f^{-1}(K_i)} |\nabla(f^*u)|_{f^*|q_2|}^2\,\mathrm{dV}({f^*|q_2|}) = 
\int_{\Sigma_2\backslash K_i} |\nabla u|^2_{|q_2|}\,\mathrm{dV}({|q_2|}). 
$$
Combining this with inequality~\eqref{forms}, one obtains
\begin{align*}
\frac{1}{K}\int_{\Sigma_1\backslash f^{-1}(K_i)} |\nabla(f^*u)|_{f^*|q_1|}^2\,\mathrm{dV}({f^*|q_1|})\leq&\int_{\Sigma_2\backslash K_i} |\nabla u|^2_{|q_2|}\,\mathrm{dV}({|q_2|})\\
\leq& K\int_{\Sigma_1\backslash f^{-1}(K_i)} |\nabla(f^*u)|_{f^*|q_1|}^2\,\mathrm{dV}({f^*|q_1|})
\end{align*}
Passing to the limit $i\to\infty$ and using conformal invariance of the Dirichlet energy leads to
\begin{equation}
\frac{1}{K}\int_{\Sigma_1} |\nabla(f^*u)|_{g_1}^2\,\mathrm{dV}({g_1})\leq \int_{\Sigma_2} |\nabla u|^2_{g_2}\,\mathrm{dV}({g_2})\leq K\int_{\Sigma_1} |\nabla(f^*u)|_{g_1}^2\,\mathrm{dV}({g_1}).
\end{equation}

Let $h_2\in[g_2]$ be a smooth metric on $\Sigma_2$. Then $\mu = (f^{-1})_*v_{h_2}$ defines a measure on $\Sigma_1$ such that 
$$
\int_{\Sigma_1}f^*u\,d\mu = \int_{\Sigma_2}u\,\mathrm{dV}({h_2}).
$$
In particular, $\vol(\Sigma_1,\mu) = \vol(\Sigma_2, h_2)$.

Putting these bounds together, we obtain that
$$
\frac{1}{K}R_{[g_1]}(f^*u,\mu)\leq R_{[h_2]}(u)\leq KR_{[g_1]}(f^*u,\mu).
$$

However, since the Teichm\"uller mapping $f$ is not smooth at zeroes of $q_1$, the measure $\mu$ is not a volume measure of a smooth Riemannian metric. In the last step of this proof we show 
that there exists a sequence of metrics $\rho_n\in[g_1]$ such that $\lambda_1(\rho_n)\to\lambda_1([g_1],\mu)$. 
In order to do that we first obtain a local expression for $\mu$ close to the singular points.

Let $s$ be a zero of $q_1$ and let $z_i$, $i = 1,2$ be local complex coordinates in the neighborhood of $s$ and $f(s)$ respectively such that $q_i = z_i^kdz_i^2$. In cones with vertices at $s$ and $f(s)$ respectively one can introduce the coordinates $\zeta_i = \frac{k+2}{2} \int_0^{z_i}\sqrt{q_i} = z_i^{\frac{k+2}{2}}$. Then in coordinates $\zeta_i$ the mapping $f$ is linear, i.e. in appropriately chosen cones the mapping $f$ in $z_i$-coordinates takes form 
\begin{equation}
\label{teichsing}
f (z_1)= \left(\tilde f (z_1^\frac{k+2}{2})\right)^\frac{2}{k+2},
\end{equation}
where $\tilde f$ is linear and the branch of the root function is chosen so that in the coordinate cone $z_i = \zeta_i^{\frac{2}{k+2}}$.

Now suppose that $\mathrm{dV}({h_2}) = \alpha(z)dz_2d\bar z_2$. Then using~\eqref{teichsing} one obtains
 \begin{align*}
 d\mu =& \alpha(f(z_1)) df(z_1)d\overline{f(z_1)}\\ =&\alpha(f(z_1)) \left|z_1^{-\frac{k+2}{2}}\tilde f(z_1^\frac{k+2}{2})\right|^{-\frac{k}{k+2}}\left(|\tilde f_z(z_1^\frac{k+2}{2})|^2 - |\tilde f_{\bar z}(z_1^\frac{k+2}{2})|^2 \right)dz_1d\bar z_1.
 \end{align*}
 As $\tilde f$ is linear, we conclude that $d\mu = \beta dz_1d\bar z_1$ where $\beta\in L^\infty(\Sigma)$.
At this point an appropriate approximation can be constructed using the following lemma and a standard regularization procedure.

\begin{lemma}
\label{lem:convergence}
Let $g$ be a Riemannian metric on a surface $\Sigma$. Suppose that $\{\rho_\varepsilon\}\subset L^\infty(\Sigma)$ is an equibounded sequence such that $\rho_\varepsilon\to\rho$ as $\varepsilon\to 0$ $\mathrm{dV}(g)$-a.e. Then one has for every $k>0$
$$
\lambda_k([g],\rho_\varepsilon \mathrm{dV}(g)) \to \lambda_k([g],\rho \mathrm{dV}(g)).
$$ 
\end{lemma}

Therefore, taking the supremum over all $h_2$ yields 
$$
\Lambda_k(\Sigma,[g_2])\leq K\Lambda_k(\Sigma,[g_1]).
$$
Switching the role of $\Sigma_1$ and $\Sigma_2$ and considering $f^{-1}$ instead of $f$ in the previous argument completes the proof in the orientable case.

The proof in the non-orientable case is easily reduced to the orientable case using the following construction. For any metric $g$ on $\Sigma$ the metric $\pi^*g$ on $\hat\Sigma$ is $\sigma$-invariant. Thus, its eigenvalues are split into $\sigma$-even and $\sigma$-odd. Moreover, $\sigma$-even eigenvalues coincide with eigenvalues of $(\Sigma,g)$. Since the Teichm\"uller map in this case preserves $\sigma$-even functions, one can repeat the proof of the orientable case, restricting oneself to even eigenvalues. This completes the proof, modulo the proof of the lemma.
\end{proof}

\begin{proof}[Proof of Lemma \ref{lem:convergence}]
The proof of this lemma follows the proof of a similar statement for Steklov eigenvalues found in Lemma 3.1 of \cite{karpukhin_maximal_metrics}. For completeness, we provide the proof. First, we observe there is a constant $C>0$ that does not depend on $\epsilon$ such that $\lambda_k([g], \rho_\epsilon \mathrm{dV}(g))\leq Ck$. This follows from the Theorem $\mathrm{A}_k$ on the top of page 7 of \cite{kokarev}. Moreover, by Proposition 1.1 of \cite{kokarev} we also have 
\begin{align*}
\lim \sup \lambda_k([g], \rho_\epsilon \mathrm{dV}(g)) \leq \lambda_k([g], \rho \mathrm{dV}(g)).
\end{align*} 
Thus, it suffices to prove that $\lambda_k([g], \rho \mathrm{dV}(g)) \leq \lim \inf \lambda_k([g], \rho_\epsilon \mathrm{dV}(g))$. Let $u_\epsilon$ be an eigenfunction corresponding to $\lambda_k([g], \rho_\epsilon \mathrm{dV}(g))$ normalized so that $\|u_\epsilon \|_{L^2(\rho_\epsilon \mathrm{dV}(g))}=1$. We will show that the $L^2(\mathrm{dV}(g))$ and $H^1(\Sigma,\mathrm{dV}(g))$-norms of the $u_\epsilon$ are bounded uniformly in $\epsilon$, for $\epsilon>0$ sufficiently small. We recall the following proposition:
 
 \begin{proposition}{(\cite{MR1411441} Lemma 8.3.1)} 
 \label{prop:bound}
 Let $(M,g)$ be a Riemannian manifold. Then there exists a constant $C>0$ such that for all $L \in H^{-1}(M)$ with $L(1) = 1$ one has
 \begin{align*}
 \|u- L(u) \|_{L^2(M)} \leq C \| L \|_{H^{-1}(M)} \left( \int_M | \nabla u |^2_g \, \mathrm{dV}_g \right)^{1/2}
 \end{align*}
 for all $u \in H^1(M)$. 
 \end{proposition}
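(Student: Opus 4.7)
The plan is to establish the inequality by writing $u - L(u)$ as a perturbation of $u - \bar u$, where $\bar u = \vol(M)^{-1}\int_M u\,\mathrm{dV}_g$ denotes the Riemannian mean, and then reducing everything to the classical Poincar\'e inequality. The crucial algebraic identity, which uses the normalization $L(1)=1$, is
\begin{align*}
u - L(u) = (u - \bar u) + (\bar u - L(u)) = (u-\bar u) + L(\bar u - u) = (u - \bar u) - L(u - \bar u).
\end{align*}
Since $L(u-\bar u)$ is a scalar, taking $L^2$-norms gives $\|u - L(u)\|_{L^2} \leq \|u-\bar u\|_{L^2} + |L(u-\bar u)|\vol(M)^{1/2}$, so it suffices to control both summands by a multiple of $\|L\|_{H^{-1}}\|\nabla u\|_{L^2}$.

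For the second summand I would use the definition of the dual norm: $|L(u-\bar u)|\leq \|L\|_{H^{-1}}\|u-\bar u\|_{H^1}$. Since the classical Poincar\'e inequality on the compact Riemannian manifold $M$ provides a constant $C_P>0$ with $\|u-\bar u\|_{L^2}\leq C_P\|\nabla u\|_{L^2}$, one has $\|u-\bar u\|_{H^1}^2 \leq (C_P^2 + 1)\|\nabla u\|_{L^2}^2$, which is exactly the estimate needed to bound $|L(u-\bar u)|\vol(M)^{1/2}$ by $(C_P^2+1)^{1/2}\vol(M)^{1/2}\|L\|_{H^{-1}}\|\nabla u\|_{L^2}$.

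The first summand $\|u - \bar u\|_{L^2}$ is bounded by $C_P\|\nabla u\|_{L^2}$, but this does not yet contain the factor $\|L\|_{H^{-1}}$. To insert it, I would exploit that $L(1)=1$ forces a uniform lower bound on $\|L\|_{H^{-1}}$. Indeed, since $\|1\|_{H^1} = \vol(M)^{1/2}$,
\begin{align*}
1 = L(1) \leq \|L\|_{H^{-1}}\|1\|_{H^1} = \vol(M)^{1/2}\|L\|_{H^{-1}},
\end{align*}
so $1 \leq \vol(M)^{1/2}\|L\|_{H^{-1}}$. Multiplying the classical Poincar\'e bound by this factor upgrades it to $\|u - \bar u\|_{L^2}\leq C_P\vol(M)^{1/2}\|L\|_{H^{-1}}\|\nabla u\|_{L^2}$.

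Combining the two estimates yields the desired inequality with $C = \bigl(C_P + (C_P^2+1)^{1/2}\bigr)\vol(M)^{1/2}$. No step is substantially difficult; the only point that requires care is the uniformity of the constant, which is precisely what the lower bound $\|L\|_{H^{-1}}\geq \vol(M)^{-1/2}$ coming from the normalization $L(1)=1$ provides. This is what distinguishes the stated inequality from a tautological rephrasing of the Poincar\'e inequality and is, conceptually, the key observation in the proof.
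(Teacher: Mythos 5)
Your argument is correct. Note that the paper does not prove this proposition at all --- it is quoted directly from \cite{MR1411441}, Lemma 8.3.1 --- so there is no in-paper proof to compare against; your reduction is essentially the standard one, based on the decomposition $u-L(u)=(u-\bar u)-L(u-\bar u)$ (valid because $L(\bar u)=\bar u L(1)=\bar u$), the Poincar\'e inequality, and the key observation that $L(1)=1$ together with $\|1\|_{H^1}=\vol(M)^{1/2}$ forces $\|L\|_{H^{-1}}\geq \vol(M)^{-1/2}$, which is exactly what makes the constant uniform in $L$. The only caveat is that the statement, as quoted, omits the hypotheses under which your (and any) proof works: $M$ must be compact (or at least of finite volume, so that $1\in H^1(M)$) and connected, so that the Poincar\'e inequality $\|u-\bar u\|_{L^2}\leq C_P\|\nabla u\|_{L^2}$ holds; this is the setting in which the proposition is applied in the paper (closed surfaces), and your proof implicitly assumes it.
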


We will apply Proposition \ref{prop:bound} with $L_\epsilon (u) = \int_{\Sigma} u \rho_\epsilon \, \mathrm{dV}(g)$. First, we compute the norm of $L_\epsilon$. We have:
\begin{align*}
\left| \int_\Sigma u \rho_\epsilon \mathrm{dV}(g) \right| \leq C \int_{\Sigma} |u| \mathrm{dV}(g) \leq C \| u \|_{L^2(\mathrm{dV}(g))} \leq C \| u \|_{H^1(\Sigma,g)},
\end{align*} 
where we used in order the uniform boundedness of $\rho_\epsilon$, the Cauchy-Schwarz inequality, and the compact embedding of $H^1(\Sigma,g)$ into $L^2(\mathrm{dV}(g))$. Thus, the family of operators $L_\epsilon$ are uniformly bounded in $H^{-1}(\Sigma)$. Applying Proposition \ref{prop:bound} to $L_\epsilon$ and $u_\epsilon$ yields:
\begin{align*}
\| u_\epsilon \|_{L^2(\mathrm{dV}(g))} \leq C \left( \int_\Sigma | \nabla u_\epsilon |^2 \mathrm{dV}(g) \right)^{1/2} = C \sqrt{\lambda_k([g],\rho_\epsilon \mathrm{dV}(g))},
\end{align*}
since $L_\epsilon(u_\epsilon)=0$. Thus, we see that $u_\epsilon$ are uniformly bounded in $H^1(\Sigma, \mathrm{dV}(g))$ and $L^2(\mathrm{dV}(g))$. 

We will now show that the family $u_\epsilon$ is uniformly bounded with respect to $\epsilon$. Indeed, each $u_\epsilon$ satisfies $\Delta_g u_\epsilon = \lambda_k([g],\rho_\epsilon \mathrm{dV}(g)) \rho_\epsilon u_\epsilon$ in a weak sense. Since $\Delta_g$ is a second order elliptic differential operator by the Sobolev Embedding Theorem and elliptic regularity we have:
\begin{align*}
\| u_\epsilon \|_\infty \leq C \| u_\epsilon \|_{H^2( \mathrm{dV}(g))} \leq &C(\| u_\epsilon \|_{L^2(\mathrm{dV}(g))}+ \| \lambda_k([g], \rho_\epsilon \mathrm{dV}(g)) \rho_\epsilon u_\epsilon \|_{L^2(\mathrm{dV(g)})})\\
\leq &C(1+ \lambda_k([g], \rho_\epsilon \mathrm{dV}(g))),
\end{align*}  
where the last inequality comes from the fact that the $L^2$-norms of the $u_\epsilon$ and the $L^\infty$-norms of $\rho_\epsilon$ are uniformly bounded. The claim follows since the eigenvalues are uniformly bounded. 

Now we show that if $u_\epsilon$ and $v_\epsilon$ are $\rho_\epsilon \mathrm{dV}(g)$-orthogonal eigenfunctions then:
\begin{align*}
\int_{\Sigma} u_\epsilon^2 \rho \, \mathrm{dV}(g) \to 1 \hspace{4 pt} \text{ and } \hspace{4 pt} \int_{\Sigma} u_\epsilon v_\epsilon \rho \, \mathrm{dV}(g) \to 0. 
\end{align*}
Since $\| u_\epsilon \|_{L^2(\rho_\epsilon \mathrm{dV}(g))} = 1$ we have:
\begin{align*}
\left| \int_\Sigma u_\epsilon^2 \rho \, \mathrm{dV}(g) -1 \right| \leq \int_{\Sigma} |u_\epsilon|^2 |\rho - \rho_\epsilon | \, \mathrm{dV}(g).
\end{align*}
Since the $u_\epsilon$ are uniformly bounded, the first claim follows. Since $\int_\Sigma u_\epsilon v_\epsilon \rho_\epsilon \, \mathrm{dV}(g) = 0$ a similar argument shows that $\int_{\Sigma} u_\epsilon v_\epsilon \rho \, \mathrm{dV}(g) \to 0$.

Finally, let $E_{k+1}(\epsilon)$ be a direct sum of the first $k$ eigenspaces for $([g], \rho_\epsilon \mathrm{dV}(g))$ with $\rho_\epsilon$-orthonormal basis given by $\left \{ u_\epsilon^i \right \}_{i=1}^{k+1}$. Any function in $E_{k+1}(\epsilon)$ can be written as $\sum_{i=1}^{k+1} c_i u_\epsilon^i$. Plugging this into the Rayleigh quotient yields:
\begin{align*}
\lambda_k([g], \rho \mathrm{dV}(g)) \leq &\frac{\int_{\Sigma} |\sum_{i}c_i\nabla u^i_\epsilon|^2 \, \mathrm{dV}(g)}{\int_\Sigma (\sum_i c_i u^i_\epsilon)^2 \rho \, \mathrm{dV}(g)}\\
= & C_\epsilon \frac{\sum_{i} c_i^2 \int_{\Sigma} |\nabla u_\epsilon^i |^2 \, \mathrm{dV}(g)}{\sum_i c_i^2 \int_\Sigma (u_\epsilon^i)^2 \rho \, \mathrm{dV}(g)} \\
\leq & C_{\epsilon}\max_i \frac{\int_{\Sigma} |\nabla u_\epsilon^i |^2 \, \mathrm{dV}(g)}{\int_\Sigma (u_\epsilon^i)^2 \rho \, \mathrm{dV}(g)},
\end{align*}  
where
\begin{align*}
C_\epsilon = \frac{\sum_i c_i^2\int_\Sigma (u_\epsilon^i)^2 \rho \, \mathrm{dV}(g)}{\sum_i c_i^2\int_\Sigma (u_\epsilon^i)^2 \rho \, \mathrm{dV}(g)+ \sum_{i<j} 2c_i c_j \int_\Sigma u_\epsilon^i u_\epsilon^j \rho \, \mathrm{dV}(g)}
\end{align*}
and in the last inequality we made use of the inequality $\frac{x_1+x_2}{y_1+y_2} \leq \max \left(\frac{x_1}{y_1}, \frac{x_2}{y_2} \right)$, for positive real numbers $x_1,x_2, y_1$ and $y_2$. By our previous observations $C_\epsilon \to 1$ while the numerator of the last term in the inequality is $\lambda_k([g], \rho_\epsilon \mathrm{dV}(g))$ and the denominator goes to one. Thus, we have $\lambda_k([g],\rho \mathrm{dV}(g)) \leq \lim \inf \lambda_k([g], \rho_\epsilon \mathrm{dV}(g))$, which completes the proof.
\end{proof}

\medskip

\subsection{Proof of Theorem \ref{TSmooth}}\label{Proof2}
The proof contains two short steps. First, we prove that the values $\Lambda_1(\mathbb{T}^2)$ and $\Lambda_{1}(\mathbb{KL})$ are achieved by metrics smooth away from finitely many conical singularities. Second, we apply Theorem \ref{unique} to prove that these metrics cannot have conical points, i.e. they must be smooth everywhere.

\begin{proof}

{\bf Step 1.}  
As we discussed in Sections~\ref{Conf&Max torus} and~\ref{Conf&Max} the space of conformal classes $\mathbb{T}^2$ and $\mathbb{KL}$ can be identified with subsets of $\mathbb{R}$ and $\mathbb{C}$ respectively. Moreover, the induced  topology coincides with the topology generated by Tecihm\"uller distance, see~\cite{MR2850125}.

 Let $\Sigma$ denote either $\mathbb{T}^2$ or $\mathbb{KL}$ and $\{g_n\}$ be a sequence of metrics on $\Sigma$ such that $\lim\bar\lambda_1(\Sigma,g_n)\to \Lambda_1(\Sigma)$.
From \cite{MR1415764} and \cite{MR2209284} we know that:
 $$
 \Lambda_1(\mathbb{T}^2) > 8\pi
 $$
and
 $$
 \Lambda_{1}(\mathbb{KL}) \geq  12\pi E(2\sqrt{2}/3) > 12\pi.
 $$
 Therefore, by Theorems \ref{Girouard torus}, \ref{Girouard} the conformal classes $[g_n]$ belong to a compact subset of the space of conformal classes. Thus, the sequence $\{[g_n]\}$ has a limit point $[g]$. By Proposition \ref{cont} one has $\Lambda_{1}(\Sigma,[g]) =\Lambda_1(\Sigma)$.
It was proved by Petrides \cite{petrides} that for any conformal class $[h]$ there exists a metric $\widetilde{h}\in [h]$, smooth except maybe at a finite number of singular points corresponding to conical singularities, such that $\Lambda_{1}(\Sigma,[h])=\bar\lambda_{1}(\Sigma, \widetilde{h})$. We conclude that there exists a metric $\widetilde g\in[g]$ such that $\Lambda_1(\Sigma) = \bar\lambda_1(\Sigma,\widetilde g)$. Moreover, by Theorem \ref{extremal} and Remark \ref{Klein} $\widetilde{g}$ is induced from a (possibly branched) minimal immersion by first eigenfunctions $\Phi$ of $\Sigma$ into a round sphere of dimension at least three.

{\bf Step 2.} Suppose that $\widetilde g$ has a conical point. From Theorem \ref{extremal} it follows that this metric is induced from a branched minimal isometric immersion into a round sphere.
 As it was observed in sections \ref{Conf&Max torus} and \ref{Conf&Max}, on $\Sigma$ there exist natural $\mathbb{S}^1$-actions $s_\theta$ by conformal diffeomorphisms without fixed points. Then the mapping $\Phi\circ s_\theta$ is again a branched minimal immersion. The metric induced by this immersion is $s_\theta^*\widetilde g$. Therefore, since the components of $\Phi$ are the first eigenfunctions of $(\Sigma,\widetilde g)$, then the components of $s_\theta^*\Phi = \Phi\circ s_\theta$ are the first eigenfunctions of $(\Sigma,s_\theta^*\widetilde g)$.
  By Theorem \ref{unique} the metrics $s_\theta^*\widetilde{g}$ and $\widetilde{g}$ must be equal. Thus, a $\bar\lambda_1$-maximal metric must be a metric of revolution. Under this $\mathbb{S}^1$-action the conical point forms a 1-dimensional singular set, which contradicts Step 1 (the set of conical points of $\Lambda_1$-maximal metric is at most finite). This completes the proof of the theorem. 
\end{proof}

\bibliography{mybib}
\bibliographystyle{alpha}

\end{document}